\title{On $p$-adic incomplete Mellin transforms and $p$-adic incomplete gamma-functions}
\author{Paul Buckingham}
\date{}
\numberwithin{equation}{section}
\numberwithin{table}{section}
\newenvironment{proof}{\noindent\emph{Proof}.}{%
\hspace{\stretch{1}}\rule{1.5ex}{1.5ex} \vspace{5 mm}}
\newtheorem{theorem}{Theorem}[section]
\newtheorem{prop}[theorem]{Proposition}
\newtheorem{lemma}[theorem]{Lemma}
\newtheorem{cor}[theorem]{Corollary}
\newtheorem{examp}[theorem]{Example}
\newcommand{\bex}{\begin{examp}\normalfont}
\newcommand{\eex}{\end{examp}}
\newtheorem{remark}[theorem]{Remark}
\newcommand{\brem}{\begin{remark}\normalfont}
\newcommand{\erem}{\end{remark}}
\newcommand\goinvisible{\pgfsys@begininvisible}
\newcommand\govisible{\pgfsys@endinvisible}
\newcommand{\dsh}{---}
\newcommand{\gammahat}{\dot{\Gamma}}
\newcommand{\extexp}{\mathscr{E}} 
\newcommand{\convderiv}{D}
\newcommand{\amice}{\mathcal{A}}
\NewDocumentCommand{\dirac}{o}{\lambda\IfValueTF{#1}{_{#1}}{}} 
\newcommand{\prodcorr}{\Theta}
\newcommand{\actcorr}{\Lambda}
\newcommand{\decayegf}{\Cp\pwr{\gfvar}_0}
\newcommand{\valueegf}{\Cp\pwr{t}_{\mathrm{cts}}}
\NewDocumentCommand{\twovar}{mo}{\mathcal{T}\IfValueTF{#2}{_{#2}}{}^{#1}}
\newcommand{\resch}[1]{\operatorname{ch}(#1)} 
\NewDocumentCommand{\princof}{O{\p}m}{\langle#2\rangle_{#1}}
\NewDocumentCommand{\trable}{o}{\mathscr{F}\IfValueTF{#1}{_{#1}}{}} 
\newcommand{\gexp}{\operatorname{Exp}} 
\newcommand{\gentmap}{\mathscr{I}}
\NewDocumentCommand{\gent}{sm}{
  \gentmap
  \IfBooleanTF{#1}{%
    \mathopen{}\left(#2\right)\mathclose{}%
  }{%
    (#2)%
  }%
}
\newcommand{\pret}{B} 
\newcommand{\postt}{C} 
\NewDocumentCommand{\Fset}{O{p}}{\mathscr{F}_{#1}}
\newcommand{\oop}{\big(\tfrac{1}{p}\big)}
\newcommand{\pmspacechar}{V}
\NewDocumentCommand{\pmspace}{O{p}m}{\pmspacechar_{#1}^{#2}}
\NewDocumentCommand{\pmspacecup}{O{p}}{\pmspacechar_{#1}}
\NewDocumentCommand{\pmspacez}{O{p}m}{\pmspacechar_{#1,0}^{#2}}
\NewDocumentCommand{\pmspacecupz}{O{p}}{\pmspacechar_{#1,0}}
\newcommand{\varholder}{{\,\cdot\,}}
\NewDocumentCommand{\pgamma}{O{\varholder}m}{\Gamma_p(#1,#2)}
\NewDocumentCommand{\lball}{mmo}{B_{< #2}(#1\IfValueTF{#3}{,#3}{})}
\newcommand{\binf}[1]{{\textstyle{\ch{\cdot}{#1}}}}
\newcommand{\myst}{\ast}
\newcommand{\xtox}{{\scriptstyle{\mathfrak{X}}}}
\newcommand{\gfvar}{t}
\newcommand{\gfn}{\widehat{\Gamma}}
\newcommand{\lgfn}{\widehat{\gamma}}
\newcommand{\inttmap}{\mathscr{I}}
\NewDocumentCommand{\inttrans}{sm}{%
  \inttmap
  \IfBooleanTF{#1}{%
    \mathopen{}\left(#2\right)\mathclose{}%
  }{%
    (#2)%
  }%
}
\newcommand{\lan}[2]{\mathrm{la}(#1,#2)}
\newcommand{\qfsymb}{q}
\newcommand{\qf}{\qfsymb}
\NewDocumentCommand{\pint}{o}{\mathbf{A}\IfValueTF{#1}{_{#1}}{}}
\NewDocumentCommand{\maxid}{o}{\mathbf{M}\IfValueTF{#1}{_{#1}}{}}
\NewDocumentCommand{\units}{o}{\mathbf{U}\IfValueTF{#1}{_{#1}}{}}
\NewDocumentCommand{\punits}{o}{\mathbf{U}\IfValueTF{#1}{_{#1}}{}^{(1)}}
\NewDocumentCommand{\expdom}{o}{\mathbf{E}\IfValueTF{#1}{_{#1}}{}}
\newcommand{\ra}{\rightarrow}
\newcommand{\bb}[1]{\mathbb{#1}}
\newcommand{\Z}{\bb{Z}}
\newcommand{\Q}{\bb{Q}}
\newcommand{\R}{\bb{R}}
\newcommand{\C}{\bb{C}}
\newcommand{\Zp}{\Z_p}
\newcommand{\Qp}{\Q_p}
\newcommand{\F}{\mathbb{F}}
\NewDocumentCommand{\Fp}{O{p}}{\F_{#1}}
\NewDocumentCommand{\Fpbr}{O{p}}{\br{\F}_{#1}}
\newcommand{\p}{\mathfrak{p}}
\newcommand{\q}{\mathfrak{q}}
\newcommand{\of}{\circ}
\newcommand{\br}[1]{\bar{#1}} 
\newcommand{\st}{^\times}
\providecommand{\ie}{i.e., }
\newcommand{\setm}{\! \smallsetminus \!}
\newcommand{\lcm}{\mathrm{lcm}}
\newcommand{\oh}{\mathcal{O}}
\newcommand{\spc}{\mbox{ }}
\newcommand{\Mod}{\spc\mathrm{mod}\spc}
\newcommand{\con}{\subseteq}
\newcommand{\eps}{\epsilon}
\newcommand{\sat}{\ |\ }
\newcommand{\SAT}{\ \Bigg|\ }
\newcommand{\id}{\operatorname{id}}
\newcommand{\Cp}{\mathbb{C}_p}
\newcommand{\teich}{\omega}
\newcommand{\calL}{\mathcal{L}}
\newcommand{\pwr}[1]{[\![#1]\!]}
\newcommand{\laurr}[1]{\{\!\{X\}\!\}}
\newcommand{\ch}[2]{{#1 \choose #2}}
\newcommand{\frakm}{\mathfrak{m}}
\newcommand{\beq}{\begin{equation}}
\newcommand{\eeq}{\end{equation}}
\newcommand{\bea}{\begin{eqnarray*}}
\newcommand{\eea}{\end{eqnarray*}}
\newcommand{\beal}{\begin{eqnarray}}
\newcommand{\eeal}{\end{eqnarray}}
\newcommand{\bcs}{\left\{\begin{array}{ll}}
\newcommand{\ecs}{\end{array}\right.}
\newcommand{\one}{\mathbf{1}}
\newcommand{\calS}{\mathcal{S}}
\newcommand{\calK}{\mathcal{K}}
\newcommand{\rl}{\operatorname{Re}}
\newcommand{\ig}{\operatorname{Im}}
\newcommand{\eqcom}[1]{\quad\text{#1}}
\NewDocumentCommand{\eqcomb}{mo}{\quad\text{(#1)\IfValueTF{#2}{#2}{}}}
\NewDocumentCommand{\mtr}{om}{%
  \IfValueTF{#1}{%
    \left(\begin{array}{@{}#1@{}}#2\end{array}\right)%
    }{%
    \begin{pmatrix}#2\end{pmatrix}%
  }%
}
\newcommand{\dv}{\,|\,}
\NewDocumentCommand{\dx}{sO{x}}{\IfBooleanTF{#1}{}{\,}d#2}
\NewDocumentCommand{\ddx}{O{x}o}{\frac{d\IfValueTF{#2}{^{#2}}{}}{d#1\IfValueTF{#2}{^{#2}}{}}}
\NewDocumentCommand{\tddx}{O{x}}{\tfrac{d}{d#1}}
\NewDocumentCommand{\Mat}{mmo}{%
  [#1]_{%
  \IfValueTF{#3}{%
    #3\leftarrow#2%
    }{%
    #2}%
  }%
}
\NewDocumentCommand{\pd}{omm}{\frac{\partial\IfValueTF{#1}{^#1}{}#2}{\partial#3\IfValueTF{#1}{^#1}{}}}
\newcommand{\eeie}{\text{\ie}\quad}
\newcommand{\cfns}[2]{C(#1,#2)}
\NewDocumentCommand{\rzrep}{sm}{\IfBooleanTF{#1}{\left\langle#2\right\rangle}{\langle#2\rangle}}
\NewDocumentCommand{\defmap}{mmmmmo}{%
  \bea
  #1 : #2 &\ra& #3 \\*\relax
  #4 &\mapsto& #5 \IfValueTF{#6}{#6}{}
  \eea
}
\NewDocumentCommand{\givemap}{ommmmo}{%
  \bea
  \IfValueTF{#1}{#1 : }{}#2 &\ra& #3 \\
  #4 &\mapsto& #5 \IfValueTF{#6}{#6}{}
  \eea
}
\NewDocumentCommand{\vint}{O{\Zp}}{\int_{#1}}
\NewDocumentCommand{\ballb}{mmo}{%
  \overline{B}%
  \IfValueTF{#3}{_{#3}}{}%
  (#1,#2)%
}
\NewDocumentCommand{\ballNb}{mmo}{%
  B%
  \IfValueTF{#3}{_{#3}}{}%
  (#1,#2)%
}
\def\BibTeX{\textrm{B\kern-.05em\textsc{i\kern-.025em b}\kern-.08em T\kern-.1667em\lower.5ex\hbox{E}\kern-.125emX}} 
\NewDocumentCommand{\stirlf}{smm}{\operatorname{\mathnormal{s}}\IfBooleanTF{#1}{'}{}(#2,#3)}
\NewDocumentCommand{\stirls}{smm}{\operatorname{\mathnormal{S}}\IfBooleanTF{#1}{'}{}(#2,#3)}
\NewDocumentCommand{\stirltf}{sm}{\operatorname{St}\IfBooleanTF{#1}{_1}{}(#2)}
\NewDocumentCommand{\stirltfinv}{sm}{\operatorname{St}^{-1}\IfBooleanTF{#1}{_1}{}(#2)}
\NewDocumentCommand{\stirltfp}{sm}{\operatorname{St}'\IfBooleanTF{#1}{_1}{}(#2)}
\NewDocumentCommand{\stirltfinvp}{sm}{(\operatorname{St}')^{-1}\IfBooleanTF{#1}{_1}{}(#2)}
\NewDocumentCommand{\lt}{sm}{%
  \calL
  \IfBooleanTF{#1}{%
    \left\{
  }{\{}%
  #2
  \IfBooleanTF{#1}{%
    \right\}
  }{\}}%
}
\NewDocumentCommand{\vp}{O{p}}{v_{#1}}
\NewDocumentCommand{\contf}{smoo}{%
  \IfBooleanTF{#1}{(}{[}%
  #2%
  \IfValueTF{#3}{;#3}{}%
  \IfValueTF{#4}{%
    \ifx#3~
    \else
      ,%
    \fi
    \overline{#4}%
  }{}%
  \IfBooleanTF{#1}{)}{]}%
}
\NewDocumentCommand{\seqop}{smo}{%
  \operatorname{\mathbf{#2}}%
  \IfValueTF{#3}{%
    \mathopen{}%
    \IfBooleanTF{#1}{\left(}{(}%
    #3%
    \IfBooleanTF{#1}{\right)}{)}%
  }{}%
}
\NewDocumentCommand{\tfent}{smmm}{
  \arraycolsep=0.5pt%
  \IfBooleanTF{#1}{\left\{}{\left[}%
  \begin{array}{c} #3 \\ #4 \end{array}%
  \IfBooleanTF{#1}{\right\}}{\right]}_{#2}
}
\begin{document}

\maketitle

\begin{abstract}
Let $r$ be a non-zero rational number. In \cite{o'desky-richman:incomplete-gamma}, a construction was given of a $p$-adic incomplete gamma-function $\Gamma_p(\cdot,r)$ for each prime $p$ for which $|r - 1|_p < 1$. Aside from the special case where $r = 1$, only finitely many primes satisfy that condition for a given $r$, so it is desirable to lessen this restriction. In the present paper, we give a construction that works under the much weaker condition that $|r|_p = 1$ using a $p$-adic integral transform we introduced in \cite{buckingham:factorial}, which we interpret here as a $p$-adic analogue of an incomplete Mellin transform. For any given $r$, the condition $|r|_p = 1$ holds for all \emph{except} finitely many primes $p$.

Our approach emphasizes the parallels between the complex and $p$-adic constructions, explaining how a $p$-adic integration-by-parts formula takes the place of complex integration by parts in the proof of the recurrence relations for the $p$-adic incomplete gamma-functions. We introduce a two-variable $p$-adic transform for the task, extending our earlier $p$-adic integral transform.
\end{abstract}

\section{Introduction}

The well-known $\Gamma$-function, when considered first as a function $\Gamma : \R_{> 0} \ra \R$, may be defined by
\[ \Gamma(s) = \int_0^\infty t^{s - 1}\exp(-t)\dx[t] .\]
Because it plays an important role in number theory, it is natural to seek $p$-adic analogues of it, and indeed one exists, Morita's $p$-adic $\Gamma$-function $\Gamma_p : \Zp \ra \Cp$, actually taking values in the unit group $\Zp\st$ of $\Zp$. It holds its position as a $p$-adic analogue of $\Gamma$ not because of any interpolation property, as in the case of $p$-adic $L$-functions, but instead by virtue of several striking similarities, including, but not limited to, the following: (i) it has a definition that gives it the semblence of a factorial, (ii) it satisfies functional equations similar to the equations $\Gamma(s + 1) = s\Gamma(s)$ and $\Gamma(1 - s)\Gamma(s) = \pi/\sin(\pi s)$, and (iii) it enjoys a version of the Gauss multiplication formula.

It was shown in \cite{o'desky-richman:incomplete-gamma} that close cousins of the $\Gamma$-function, the incomplete gamma-functions, in fact can be $p$-adically interpolated in a very direct manner. If $r$ is chosen, for simplicity in this introduction, to be a positive real number, then $\Gamma(\cdot,r)$, when viewed as a function $\R \ra \R$, satisfies
\[ \Gamma(s,r) = \int_r^\infty t^{s - 1}\exp(-t)\dx[t] \]
for all $s \in \R$. If $r \in \Z_{\geq 1}$ and $p$ is a prime such that $r \equiv 1 \Mod p$, then the interpolation property shown in \cite{o'desky-richman:incomplete-gamma} was of the form $\tau_p(\Gamma(m,r)) = \Gamma_p(m,r)$ for all $m \in \Z_{\geq 1}$, where $\tau_p : \Q(e) \ra \Qp$ is a field embedding mapping the real number $e$ to a particular choice of transcendental element in $\Qp$. O'Desky and Richman established their interpolation property by using a theorem they proved, in the same paper, that gives a novel characterization of functions $\Z_{\geq 1} \ra \Qp$ in a certain class that can be extended by continuity to $\Zp \ra \Qp$.

In \cite{buckingham:factorial}, the present author subsequently obtained the same interpolation property, without the restriction that $r$ be an integer, but again under the assumption that $r \equiv 1 \Mod p$ (in the general sense that $|r - 1|_p < 1$). The methods in our paper were unrelated to those of \cite{o'desky-richman:incomplete-gamma}, using instead an isometric automorphism, which we called $\calS$, of the $\Cp$-Banach space $\cfns{\Zp}{\Cp}$ of continuous functions $\Zp \ra \Cp$, an automorphism that restricts to one of the space $\lan{\Zp}{\Cp}$ of locally analytic functions $\Zp \ra \Cp$. We also introduced a $p$-adic integral transform $\inttmap$ and proved, again in the case where $|r - 1|_p < 1$, an integral-transform formula for O'Desky and Richman's $p$-adic incomplete gamma-functions, akin to the formula for the classical incomplete gamma-functions.

The goal of the present paper is two-fold: (i) to define, for any given non-zero rational number $r$, a $p$-adic incomplete gamma-function not only when $p$ is one of the finitely many primes for which $|r - 1|_p < 1$, but under the weaker assumption that $|r|_p = 1$, an assumption that holds for all except finitely many $p$, and (ii) to show that our function interpolates $\Gamma(\cdot,r)$ on the set $1 + (p - 1)\Z_{\geq 0}$, which is dense in $\Zp$. This brings the theory of $p$-adic incomplete gamma-functions more into line with that of $p$-adic $L$-functions, where likewise the set of primes for which interpolation is possible is not finite, and where the set of integers on which the interpolation takes place is also defined by a congruence condition mod $p - 1$ (mod $2$ if $p = 2$); \cite[p.~57]{washington:cyc}.

At the heart of our interpolation strategy is a power series $f_r(t) \in \Q\pwr{t}$ that is common to both the classical and the $p$-adic incomplete gamma-functions\dsh common in the sense that the classical and the $p$-adic functions alike can be constructed directly from $f_r(t)$ via processes that, while different, are nonetheless parallel. Defined for any non-zero rational number $r$ by
\[ f_r(t) = -r((1 - t)^{\tfrac{1}{r}} - 1) = -r\sum_{k = 1}^\infty (-1)^k\ch{1/r}{k}t^k ,\]
it is a global object in the sense of having rational coefficients, but it gives rise to local objects, both at the archimedean place and at the non-archimedean ones. Broadly speaking, from $f_r(t)$ one constructs on the one hand a $\C$-valued function $\phi_{f_r} : \R_{\leq 0} \ra \C$ and on the other a $\Cp$-valued function $\phi_{f_r} : \Zp \ra \Cp$. In either case, we apply a function transform to $\phi_{f_r}$ that, very roughly speaking, produces an incomplete gamma-function. In the complex setting, the function transform is essentially a sort of incomplete Mellin transform, and in the $p$-adic setting, it is the transform $\inttmap$ that we introduced in \cite{buckingham:factorial}. The transformed functions, complex and $p$-adic, satisfy a functional equation that can be traced back to the equation
\[ f_r'(t) = (1 - t)^{\tfrac{1}{r} - 1} .\]
From the functional equation, we obtain a recurrence relation on $\Z_{\geq 0}$ satisfied by all transformed functions at once, the complex and the various $p$-adic functions, allowing us to prove our interpolation result.

The outline of the paper is as follows. After setting some key notation, we develop the $p$-adic theory in Sections~\ref{sec: p-adic integrals} to \ref{sec: 2-var}. Section~\ref{sec: p-adic integrals} reviews $\Cp$-valued measures on $\Zp$, Section~\ref{sec: pwr series corrs} sets out two power series correspondences we need in the $p$-adic theory, and in Section~\ref{sec: gexp} we describe how to produce a continuous function $\phi : \Zp \ra \Cp$ such that $\sum_{n = 0}^\infty \phi(n)\,\tfrac{t^n}{n!}$ is equal to $\exp(f(t))$ when $f(t)$ is a given power series, under some hypotheses on $f(t)$. Section~\ref{sec: 2-var} is the major $p$-adic section, in which we introduce a $2$-variable transform that unites the transforms $\calS$ and $\inttmap$ of \cite{buckingham:factorial}. The second of these, $\inttmap$, is a $p$-adic analogue of the complex incomplete Mellin transform. That section finishes with a proof of a $p$-adic analogue of classical integration by parts, which we note is different from the analogue introduced in \cite{cohen-friedman:raabe}. (Indeed, the two integration-by-parts results apply to two different types of $p$-adic integral.) In Section~\ref{sec: cmx inc gamma}, we review the construction of complex incomplete gamma-functions. Section~\ref{sec: transforms} gives a theorem on functional equations satisfied by functions obtained via the complex and $p$-adic incomplete Mellin transforms. Finally, in Section~\ref{sec: interpolation}, we prove our interpolation result for incomplete gamma-functions.

We end our introduction by remarking on the paper \cite{p-adic-incomp-and-hasse}, where another incomplete gamma-function was introduced (under the assumption $r \equiv 1 \Mod p$). That function is different from the one considered in \cite{o'desky-richman:incomplete-gamma} and \cite{buckingham:factorial}, being more akin to Morita's $\Gamma$-function. We will not consider the function of \cite{p-adic-incomp-and-hasse} in our paper.

The author would like to thank Al Weiss for always being willing to listen, Daniel Barsky for reading the manuscript and supplying interesting remarks, and Lana and Lynne for providing support in their own ways.

\section{Notation, definitions, and preliminary facts} \label{sec: notation}

Let $K$ be a field. We denote its group of non-zero elements by $K\st$. If $a(t) \in 1 + tK\pwr{t}$ and $y \in K$, we may set
\[ a(t)^y = \sum_{k = 0}^\infty \ch{y}{k}(a(t) - 1)^k ,\]
a well-defined element of $K\pwr{t}$. Formal manipulations of power series show that for all $a(t) \in 1 + tK\pwr{t}$ and all $y,z \in K$,
\begin{equation} \label{props of a(t)}
\left.
\begin{aligned}
(a(t)^y)' &= y\frac{a'(t)}{a(t)}a(t)^y , \quad \\
a(t)^{y + z} &= a(t)^y a(t)^z ,\\
a(t)^{yz} &= (a(t)^y)^z .
\end{aligned}
\right\}
\end{equation}

The following notions will be essential in the $p$-adic theory:
\begin{center}
\begin{longtable}{>{$}l<{$} @{:\quad} p{9cm}}
\cfns{\Zp}{\Cp} & The $\Cp$-vector space of continuous functions $\Zp \ra \Cp$ \\
\one & The function $x \mapsto 1$ \\
\xtox & The function $x \mapsto x$ \\
(x)_n & The element $x(x - 1)(x - 2) \cdots (x - (n - 1))$ of $\Zp$, where $n \in \Z_{\geq 0}$ and $x \in \Zp$ \\
\binf{n} & The function $x \mapsto \tfrac{1}{n!}(x)_n$, where $n \in \Z_{\geq 0}$ \\
\sigma & The forward-shift operator, satisfying $\sigma(\phi)(x) = \phi(x + 1)$ if $\phi \in \cfns{\Zp}{\Cp}$ \\
\nabla & The forward-difference operator, $\nabla = \sigma - \id$, where $\id$ is the identity operator; thus, $(\nabla\phi)(x) = \phi(x + 1) - \phi(x)$ \\
\pint & The subring $\{x \in \Cp \sat |x| \leq 1\}$ of $\Cp$ \\
\maxid & The ideal $\{x \in \Cp \sat |x| < 1\}$ of $\pint$ \\
\units & The subgroup $\{x \in \Cp\st \sat |x| = 1\}$ of $\Cp\st$ \\
\punits & The subgroup $\{x \in \Cp\st \sat |x - 1| < 1\} \con \units$ consisting of principal units in $\Cp$ \\
\|\cdot\| & The sup-norm on a function space, usually the space $\cfns{\Zp}{\Cp}$
\end{longtable}
\end{center}

\section{Integrals with respect to $p$-adic measures} \label{sec: p-adic integrals}

By a \emph{measure}, we will mean a bounded linear functional $\mu : \cfns{\Zp}{\Cp} \ra \Cp$, bounded in the sense that there is $C > 0$ such that $|\mu(\phi)| \leq C\|\phi\|$ for all $\phi \in \cfns{\Zp}{\Cp}$. If $M(\Zp,\Cp)$ denotes the set of measures and $\Cp\pwr{t}_{\mathrm{b}}$ denotes the set of power series in $\Cp\pwr{t}$ with bounded coefficients, then of course both are $\Cp$-vector spaces, and there are mutually inverse $\Cp$-linear maps
\bea
M(\Zp,\Cp) &\ra& \Cp\pwr{t}_{\mathrm{b}} \\*
\mu &\mapsto& \sum_{n = 0}^\infty \mu\left(\binf{n}\right)t^n \\[2ex]
\Cp\pwr{t}_{\mathrm{b}} &\ra& M(\Zp,\Cp) \\*
\sum_{n = 0}^\infty b_n t^n &\mapsto& \left(\sum_{n = 0}^\infty a_n\binf{n} \mapsto \sum_{n = 0}^\infty a_n b_n\right) .
\eea
If $\mu \in M(\Zp,\Cp)$ and $\phi \in \cfns{\Zp}{\Cp}$, then $\mu(\phi)$ will be denoted
\[ \int_{\Zp} \phi\dx[\mu] .\]

Two measures relevant for this paper are what we will call $\dirac[x]$, where $x \in \Zp$, and $\mu_{\psi,x}$, where $\psi \in \cfns{\Zp}{\Cp}$, given by the correspondences
\begin{align*}
\dirac[x] &\leftrightarrow (1 + t)^x ,\\
\mu_{\psi,x} &\leftrightarrow \sum_{n = 0}^\infty \ch{x}{n}\psi(x - n)t^n .
\end{align*}
If $\phi = \sum_{n = 0}^\infty a_n\binf{n}$, then because $(1 + t)^x = \sum_{n = 0}^\infty \ch{x}{n}t^n$, $\dirac[x]$ satisfies
\[ \int_{\Zp} \phi\dx[\dirac[x]] = \sum_{n = 0}^\infty a_n\ch{x}{n} = \phi(x) ,\]
that is, $\dirac[x]$ is the Dirac measure at $x$. The case $x = -1$ will be used frequently and will be denoted simply $\dirac$. Thus,
\[ \int_{\Zp} \phi\dx[\dirac] = \phi(-1) .\]
Of course, $\dirac[x] = \mu_{\one,x}$.

\section{Power series correspondences} \label{sec: pwr series corrs}

We recall the convolution $\myst$ introduced in \cite[(5.1)--(5.2)]{buckingham:factorial}. If $\decayegf$ denotes the set of power series $\sum_{n = 0}^\infty a_n\tfrac{t^n}{n!} \in \Cp\pwr{t}$ such that $a_n \to 0$ as $n \to \infty$, then the map
\bea
\prodcorr : \cfns{\Zp}{\Cp} &\ra& \decayegf \\*
\phi &\mapsto& \sum_{n = 0}^\infty (\nabla^n\phi)(0)\,\frac{t^n}{n!}
\eea
is a $\Cp$-vector space isomorphism, and the convolution $\myst$ on $\cfns{\Zp}{\Cp}$ is defined so that if $\cfns{\Zp}{\Cp}$ has the product $\myst$ instead of the usual pointwise product, then $\prodcorr$ is in fact an isomorphism of $\Cp$-algebras:
\begin{align}
\prodcorr(\phi \myst \psi) &= \prodcorr(\phi)\prodcorr(\psi) , \label{conv prod rule} \\
\eeie \sum_{n = 0}^\infty (\nabla^n(\phi \myst \psi))(0)\,\frac{t^n}{n!} &= \left(\sum_{n = 0}^\infty (\nabla^n\phi)(0)\,\frac{t^n}{n!}\right)\left(\sum_{n = 0}^\infty (\nabla^n\psi)(0)\,\frac{t^n}{n!}\right) . \nonumber
\end{align}

\begin{prop} \label{myst and norm}
If $\phi,\psi \in \cfns{\Zp}{\Cp}$, then $\|\phi \myst \psi\| \leq \|\phi\|\,\|\psi\|$.
\end{prop}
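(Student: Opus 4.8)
The plan is to transfer the problem to Mahler coefficients, where the convolution $\myst$ becomes a binomial (Cauchy-type) convolution and the sup-norm becomes a supremum of absolute values, after which the estimate reduces to the fact that binomial coefficients are $p$-adic integers. Write the Mahler expansions $\phi = \sum_{n = 0}^\infty a_n\binf{n}$ and $\psi = \sum_{n = 0}^\infty b_n\binf{n}$, where $a_n = (\nabla^n\phi)(0)$ and $b_n = (\nabla^n\psi)(0)$. By Mahler's theorem, $a_n \to 0$ and $b_n \to 0$, and, crucially, $\|\phi\| = \sup_n |a_n|$ and $\|\psi\| = \sup_n |b_n|$; this identification of the sup-norm with the supremum of the absolute values of the Mahler coefficients is the fact I will lean on at both ends of the argument.

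First I would compute the Mahler coefficients $c_n = (\nabla^n(\phi \myst \psi))(0)$ of the convolution. Since $\prodcorr$ is an isomorphism of $\Cp$-algebras by \eqref{conv prod rule}, we have $\prodcorr(\phi \myst \psi) = \prodcorr(\phi)\prodcorr(\psi)$, so $\sum_n c_n\tfrac{t^n}{n!}$ is the product of the two exponential generating functions $\sum_m a_m\tfrac{t^m}{m!}$ and $\sum_k b_k\tfrac{t^k}{k!}$. Multiplying these out and collecting the coefficient of $t^n/n!$ gives the binomial convolution
\[ c_n = \sum_{m = 0}^n \ch{n}{m} a_m b_{n - m} . \]
This is the one genuine computation in the argument, but it is just the standard product rule for exponential generating functions.

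Finally I would estimate $|c_n|$ by the ultrametric inequality: $|c_n| \leq \max_{0 \leq m \leq n} |\ch{n}{m}|\,|a_m|\,|b_{n - m}|$. Because each $\ch{n}{m}$ is a rational integer, it lies in $\pint$ and hence $|\ch{n}{m}| \leq 1$, so $|c_n| \leq \max_m |a_m|\,|b_{n - m}| \leq \big(\sup_m |a_m|\big)\big(\sup_k |b_k|\big) = \|\phi\|\,\|\psi\|$. Taking the supremum over $n$ and using $\|\phi \myst \psi\| = \sup_n |c_n|$ yields the claim. I do not expect a serious obstacle here; the only points requiring care are invoking Mahler's theorem in the precise form $\|\,\cdot\,\| = \sup_n |(\nabla^n\,\cdot\,)(0)|$, and observing that the resulting bound is uniform in $n$ precisely because the binomial coefficients never contribute a factor exceeding $1$ in absolute value.
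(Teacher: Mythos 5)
Your proof is correct and follows essentially the same route as the paper's: both pass to Mahler coefficients, identify the Mahler coefficients of $\phi \myst \psi$ as the binomial convolution $\sum_{k=0}^n \ch{n}{k}a_k b_{n-k}$ (the paper states this directly, you derive it from $\prodcorr$ being an algebra isomorphism, which is the same justification), and then conclude via the ultrametric inequality together with $|\ch{n}{k}| \leq 1$. No gaps; your write-up merely makes explicit the integrality of the binomial coefficients, which the paper uses silently.
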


\begin{proof}
We use the fact that the sup-norm of a function in $\cfns{\Zp}{\Cp}$ is equal to the supremum of the absolute values of its Mahler coefficients \cite[Chp.~4, Sect.~2.4, Thm.~1]{robert:p-adic}. Let the Mahler series of $\phi$ and $\psi$ be
\[ \phi = \sum_{n = 0}^\infty a_n\binf{n} \quad\text{and}\quad \psi = \sum_{n = 0}^\infty b_n\binf{n} .\]
Then $\phi \myst \psi$ has Mahler series
\[ \phi \myst \psi = \sum_{n = 0}^\infty \left(\sum_{k = 0}^n \ch{n}{k}a_k b_{n - k}\right)\binf{n} ,\]
so
\[ \|\phi \myst \psi\| = \sup_{n \geq 0} \left|\sum_{k = 0}^n \ch{n}{k}a_k b_{n - k}\right| \leq \sup_{n \geq 0} \max_{k \leq n} |a_k|\,|b_{n - k}| \leq \|\phi\|\,\|\psi\| .\]
\end{proof}

In this paper, we will need the following other correspondence between continuous functions and power series. Let $\valueegf$ denote the set of power series $\sum_{n = 0}^\infty a_n\tfrac{t^n}{n!}$ such that the function $\Z_{\geq 0} \ra \Cp$ given by $n \mapsto a_n$ has an extension to a continuous function $\Zp \ra \Cp$. Then
\bea
\decayegf &\ra& \valueegf \\*
G(t) &\mapsto& \exp(t)G(t)
\eea
is an isomorphism of $\Cp$-vector spaces, and if $\phi \in \cfns{\Zp}{\Cp}$, then this isomorphism has the effect
\[ \sum_{n = 0}^\infty (\nabla^n\phi)(0)\,\frac{t^n}{n!} \mapsto \sum_{n = 0}^\infty \phi(n)\,\frac{t^n}{n!} ;\]
see \cite[Chap.~4, Sect.~1.1, Comment (2)]{robert:p-adic}. We consider, then, the isomorphism of $\Cp$-vector spaces
\bea
\actcorr : \cfns{\Zp}{\Cp} &\ra& \valueegf \\*
\phi &\mapsto& \exp(t)\prodcorr(\phi) = \sum_{n = 0}^\infty \phi(n)\,\frac{t^n}{n!} .
\eea
This map no longer respects multiplication but instead respects $\cfns{\Zp}{\Cp}$-actions, once those actions are defined correctly. Specifically, if we define actions on $\cfns{\Zp}{\Cp}$ and $\valueegf$ by, respectively,
\begin{align*}
\phi \cdot \psi &= \phi \myst \psi ,\\
\phi \cdot G(t) &= \prodcorr(\phi)G(t) ,
\end{align*}
then $\actcorr$ is a $\cfns{\Zp}{\Cp}$-module isomorphism. Indeed, multiplying both sides of the equality
\[ \prodcorr(\phi \myst \psi) = \prodcorr(\phi)\prodcorr(\psi) \]
by $\exp(t)$ gives
\beq \label{conv act rule}
\actcorr(\phi \myst \psi) = \prodcorr(\phi)\actcorr(\psi) .
\eeq
Further,
\beq \label{shiftu and diff}
\actcorr(\sigma(\phi)) = \actcorr(\phi)' ,
\eeq
simply because
\[ \sum_{n = 0}^\infty \sigma(\phi)(n)\,\frac{t^n}{n!} = \sum_{n = 0}^\infty \phi(n + 1)\,\frac{t^n}{n!} = \ddx[t]\left(\sum_{n = 0}^\infty \phi(n)\,\frac{t^n}{n!}\right) .\]
As we will see later, the correspondence given by $\actcorr$ will be invaluable for the theory of $p$-adic incomplete Mellin transforms.

\section{Continuous functions via $\gexp$} \label{sec: gexp}

Let $p$ be a prime. Define
\[ \expdom = \left\{x \in \Cp \SAT |x| < \oop^{\tfrac{1}{p - 1}}\right\} ,\]
on which the exponential series $\exp$ converges. If $f(t) \in \Cp\pwr{t}$ and $f(0) \in \expdom$, define $\gexp(f(t)) \in \Cp\pwr{t}$ by
\[ \gexp(f(t)) = \exp(f(0))\exp(f(t) - f(0)) .\]
(Note that $\exp(f(t) - f(0))$ converges in the formal power series ring $\Cp\pwr{t}$ because $f(t) - f(0)$ has positive $t$-adic valuation.) One verifies easily that the usual formulas
\[ \gexp(f(t) + g(t)) = \gexp(f(t))\gexp(g(t)) \quad\text{and}\quad \ddx[t](\gexp(f(t))) = f'(t)\gexp(f(t)) \]
hold.

\begin{lemma} \label{digit sum lemma}
Let $\ell \geq 2$ be an integer. If $n$ is a positive integer, $S_\ell(n)$ is its digit sum in base $\ell$, and $\log_\ell(n)$ is its logarithm to the base $\ell$, then
\[ S_\ell(n) \leq (\ell - 1)(\log_\ell(n) + 1) .\]
\end{lemma}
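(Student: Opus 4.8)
The bound to prove is $S_\ell(n) \leq (\ell - 1)(\log_\ell(n) + 1)$, where $S_\ell(n)$ is the base-$\ell$ digit sum. The plan is to work directly from the base-$\ell$ expansion $n = \sum_{i = 0}^{d} c_i \ell^i$, where $0 \leq c_i \leq \ell - 1$ and $c_d \neq 0$, so that $d$ is the index of the leading digit. Here the number of digits is $d + 1$.

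First I would bound the digit sum crudely but sharply: since each of the $d + 1$ digits is at most $\ell - 1$, we immediately have
\[ S_\ell(n) = \sum_{i = 0}^{d} c_i \leq (\ell - 1)(d + 1) .\]
This reduces the problem to showing $d + 1 \leq \log_\ell(n) + 1$, i.e.\ simply $d \leq \log_\ell(n)$. Since the leading digit satisfies $c_d \geq 1$, we have $n \geq c_d \ell^d \geq \ell^d$, and taking logarithms to base $\ell$ (a monotone increasing operation) gives $\log_\ell(n) \geq d$, which is exactly what is needed.

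Combining the two inequalities yields
\[ S_\ell(n) \leq (\ell - 1)(d + 1) \leq (\ell - 1)(\log_\ell(n) + 1) ,\]
completing the argument. The only point requiring mild care is the relationship between the number of digits $d + 1$ and $\log_\ell(n)$; the cleanest route is to anchor everything to the leading digit via $n \geq \ell^d$, rather than to invoke the sometimes off-by-one identity $d = \floor{\log_\ell(n)}$ directly. There is no real obstacle here: the result is elementary, and both steps are one-line estimates. If anything, the subtlety is purely presentational—ensuring the logarithm inequality is applied in the correct direction and that the edge behavior (a single-digit $n$, where $d = 0$) is manifestly covered, which it is, since then $S_\ell(n) = n \leq \ell - 1 \leq (\ell - 1)(\log_\ell(n) + 1)$.
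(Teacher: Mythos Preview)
Your proof is correct and follows essentially the same approach as the paper: write the base-$\ell$ expansion, bound each digit by $\ell-1$ to get $S_\ell(n) \leq (\ell-1)(d+1)$, and use $n \geq \ell^d$ to obtain $d \leq \log_\ell(n)$.
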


\begin{proof}
Write $n = \sum_{i = 0}^k a_i\ell^i$ where $k \in \Z_{\geq 0}$, $a_i \in \{0,\ldots,\ell - 1\}$ for each $i \in \{0,\ldots,k\}$, and $a_k \geq 1$. Then on the one hand, $n \geq \ell^k$, so $\log_\ell(n) \geq k$, but on the other hand, the fact that $a_i \leq \ell - 1$ for all $i$ gives
\[ S_\ell(n) \leq (\ell - 1)(k + 1) \leq (\ell - 1)(\log_\ell(n) + 1) .\]
\end{proof}

\begin{prop} \label{convergence of egf coeffs}
Let $f(t) \in \pint\pwr{t}$ be a power series such that $f(0) \in \expdom$ and $f'(0) \in \punits$. Then there is a unique $\phi \in \cfns{\Zp}{\Cp}$ such that
\[ \gexp(f(t)) = \sum_{n = 0}^\infty \phi(n)\,\frac{t^n}{n!} = \actcorr(\phi) \]
(as an equality of formal power series).
\end{prop}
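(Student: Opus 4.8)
The plan is to exploit the two isomorphisms of the previous section to reduce the claim to a coefficient-decay estimate. Since $\actcorr : \cfns{\Zp}{\Cp} \to \valueegf$ is an isomorphism, uniqueness of $\phi$ is automatic, so it suffices to prove existence, i.e. that $\gexp(f(t))$ lies in $\valueegf$. Recall that multiplication by $\exp(t)$ is an isomorphism $\decayegf \to \valueegf$, so $\gexp(f(t)) \in \valueegf$ if and only if $\exp(-t)\gexp(f(t)) \in \decayegf$. Expanding the definition of $\gexp$,
\[ \exp(-t)\gexp(f(t)) = \exp(f(0))\exp\big(f(t) - f(0) - t\big) = \exp(f(0))\exp(g(t)), \]
where I set $g(t) = f(t) - f(0) - t$. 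This $g$ has no constant term, so $\exp(g(t))$ is a well-defined formal power series, and crucially its linear coefficient is $g'(0) = f'(0) - 1 \in \maxid$, since the hypothesis $f'(0) \in \punits$ says exactly $|f'(0) - 1| < 1$. As $\exp(f(0))$ is a nonzero constant (it converges because $f(0) \in \expdom$), it does not affect membership in $\decayegf$, so the whole proposition comes down to showing that if $\exp(g(t)) = \sum_{n \geq 0} b_n \tfrac{t^n}{n!}$, then $b_n \to 0$.

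Next I would write $g(t) = \sum_{k \geq 1} d_k t^k$ with $d_1 = f'(0) - 1$ satisfying $|d_1| < 1$ and $|d_k| \leq 1$ for all $k \geq 2$ (using $f \in \pint\pwr{t}$). Expanding $\exp(g(t)) = \sum_{m \geq 0} g(t)^m/m!$ and reading off the coefficient of $t^n$, one finds, for $n \geq 1$,
\[ b_n = n!\sum_{m = 1}^n \frac{1}{m!} \sum_{\substack{k_1 + \cdots + k_m = n \\ k_i \geq 1}} d_{k_1}\cdots d_{k_m}, \]
the sum over $m$ being finite because $g(t)^m$ is divisible by $t^m$. The goal is then to show that the $p$-adic valuation $v_p(b_n) \to \infty$.

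The heart of the argument, and the step I expect to be the main obstacle, is the valuation estimate, which must balance two competing effects: the factor $1/m!$ is $p$-adically large when $m$ is large, while the decay $|d_1| < 1$ helps precisely when many of the parts $k_i$ equal $1$, which is forced when $m$ is large. Concretely, in any composition of $n$ into $m$ positive parts the number of parts equal to $1$ is at least $2m - n$, so writing $\delta = v_p(d_1) > 0$ one gets $v_p(d_{k_1}\cdots d_{k_m}) \geq \max(0, 2m - n)\delta$. Combined with Legendre's formula $v_p(n!) = (n - S_p(n))/(p - 1)$, this yields
\[ v_p(b_n) \geq \min_{1 \leq m \leq n} \left[ v_p(n!/m!) + \max(0, 2m - n)\delta \right]. \]
I would then split the minimisation at $m = n/2$: for $m \leq n/2$ the term $v_p(n!/m!) \geq (n/2 - S_p(n))/(p-1)$ already grows, while for $m > n/2$ one bounds $v_p(n!/m!) + (2m - n)\delta$ below by an affine function of $m$ (after discarding $S_p(m) \geq 0$) whose minimum over $[n/2,\,n]$ is at least $\min\big(\tfrac{n}{2(p-1)}, n\delta\big)$, up to the same $-S_p(n)/(p-1)$ correction.

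Finally, Lemma~\ref{digit sum lemma} controls the correction term, giving $S_p(n)/(p-1) \leq \log_p(n) + 1$, which is negligible against the linear growth. Hence $v_p(b_n) \geq \min\big(\tfrac{n}{2(p-1)}, n\delta\big) - \log_p(n) - 1 \to \infty$, so $b_n \to 0$, whence $\exp(g(t)) \in \decayegf$ and therefore $\gexp(f(t)) \in \valueegf$, completing the argument. The only genuinely delicate point is the combinatorial lower bound on the number of unit parts together with the case analysis; everything else is bookkeeping with the isomorphisms already established.
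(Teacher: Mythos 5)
Your proof is correct, and its core is genuinely different from the paper's. Both arguments make the same reduction: set $g(t) = f(t) - f(0) - t$, note that $g$ has no constant term and that $\vp(g'(0)) > 0$, and reduce the whole proposition (via the isomorphisms $\actcorr$ and $\decayegf \racan \valueegf$) to showing that the coefficients $b_n$ in $\exp(g(t)) = \sum_n b_n \tfrac{t^n}{n!}$ tend to $0$; both also finish with Legendre's formula and Lemma~\ref{digit sum lemma}. Where you diverge is in how that decay is established. The paper argues analytically: it invokes the growth-modulus and critical-radius theory for composition of $p$-adic power series (Robert, Chap.~6, Sect.~1.4--1.5), picks a non-critical radius $|\tau| = \oop^{\tfrac{1}{p-1} - \beta}$ with $\beta$ small, shows $M_{|\tau|}(g) < \oop^{\tfrac{1}{p-1}}$ so that $\exp(g(t))$ converges at $\tau$, and then converts convergence at a point of absolute value exceeding $\oop^{\tfrac{1}{p-1}}$ into decay of the $b_n$. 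You instead work directly on the coefficients: expanding $\exp(g(t)) = \sum_m g(t)^m/m!$ and using the observation that a composition of $n$ into $m$ positive parts has at least $2m - n$ parts equal to $1$, so the small coefficient $d_1 = f'(0) - 1$ (with $\delta = \vp(d_1) > 0$, which is exactly the hypothesis $f'(0) \in \punits$) appears often enough to offset $\vp(1/m!)$. I checked the composition-counting lemma (if $j$ parts equal $1$ then $n \geq j + 2(m - j)$, so $j \geq 2m - n$), the coefficient formula for $b_n$, and the endpoint minimization of the affine function $m \mapsto \tfrac{n-m}{p-1} + (2m-n)\delta$ on $[n/2,\,n]$; all are sound, and they give the explicit bound $\vp(b_n) \geq \min\bigl(\tfrac{n}{2(p-1)},\, n\delta\bigr) - \log_p(n) - 1$. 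What each approach buys: yours is elementary and self-contained, requiring no growth-modulus machinery and no care about avoiding critical radii, and it produces a quantitative valuation estimate; the paper's is shorter on the page because the hard estimate is delegated to a standard theorem, at the cost of importing that theory. Your uniqueness argument (injectivity of $\actcorr$) is the paper's denseness-and-continuity remark in different clothing, so there is no real difference there.

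One small point to tidy up: the degenerate case $d_1 = 0$, i.e.\ $f'(0) = 1$, where $\delta$ is not a (finite) positive rational. There every composition with $2m > n$ contains a part equal to $1$ and hence contributes $0$, so $b_n = n!\sum_{m \leq n/2}(\cdots)$ and the bound $\vp(b_n) \geq \tfrac{n}{2(p-1)} - \log_p(n) - 1$ still holds; equivalently, with the convention $n\delta = +\infty$ your displayed bound remains literally true. A sentence to this effect would make the write-up airtight.
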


\begin{proof}
We will make use of a result on the radius of convergence of a composition of $p$-adic power series, relying on the notion of the \emph{growth modulus}. We refer the reader to \cite[Chap.~6, Sect.~1.4]{robert:p-adic} for a discussion of the growth modulus and the related notion of critical radius.

Let $g(t) = f(t) - f(0) - t$, observe that $g(t) \in t\pint\pwr{t}$ and $g'(0) \in \maxid$, and write
\[ \exp(g(t)) = \sum_{n = 0}^\infty d_n\,\frac{t^n}{n!} \]
where $d_n \in \Cp$ for all $n$. We show that $d_n \to 0$ as $n \to \infty$.

Let $b = g'(0)$, and write $|b| = \oop^\alpha$ where $\alpha \in \Q_{> 0}$. Noting that $g(t) \in \pint\pwr{t}$ has radius of convergence at least $1$, choose $\beta \in \Q$ such that $0 < \beta < \min(\alpha,\tfrac{1}{2(p - 1)})$ and $\oop^{\tfrac{1}{p - 1} - \beta}$ is not a critical radius of $g(t)$. Then choose $\tau \in \Cp$ such that
\[ |\tau| = \oop^{\tfrac{1}{p - 1} - \beta} ,\]
and observe that
\begin{itemize}
\item $|\tau| < 1$, so $|\tau^2| \geq |\tau^k|$ for all $k \geq 2$,
\item $|\tau| = \oop^{\tfrac{1}{p - 1} - \beta} > \oop^{\tfrac{1}{p - 1}}$ because $\beta > 0$,
\item $|b\tau| = \oop^{\alpha + \tfrac{1}{p - 1} - \beta} < \oop^{\tfrac{1}{p - 1}}$ because $\beta < \alpha$,
\item $|\tau^2| = \oop^{2\big(\tfrac{1}{p - 1} - \beta\big)} < \oop^{\tfrac{1}{p - 1}}$ because $\beta < \tfrac{1}{2(p - 1)}$ so $2\big(\tfrac{1}{p - 1} - \beta\big) > \tfrac{1}{p - 1}$.
\end{itemize}
Now, if $s \mapsto M_s(g)$ is the growth modulus of $g$, then because $|\tau|$ is not a critical radius of $g$, we have $M_{|\tau|}(g) = |g(\tau)|$, so
\[ M_{|\tau|}(g) = |g(\tau)| \leq \max(|b\tau|,|\tau^2|) < \oop^{\tfrac{1}{p - 1}} ,\]
and so the radius of convergence of the power series $G(t) = \exp(g(t))$ is greater than $|\tau|$ by the theorem in \cite[Chp.~VI, Section~1.5]{robert:p-adic}. Therefore, $G(\tau)$ converges, so because
\[ G(t) = \exp(g(t)) = \sum_{n = 0}^\infty d_n\,\frac{t^n}{n!} ,\]
$d_n\frac{\tau^n}{n!} \to 0$. Write
\[ |d_n| = \oop^{v_n} \quad\text{and}\quad \left|d_n\frac{\tau^n}{n!}\right| = \oop^{u_n} \eqcomb{$v_n,u_n \in \Q$}[.] \]
The convergence of $G(\tau)$ says that $u_n \to \infty$. Now, using a well-known formula for $\vp(n!)$ (e.g., \cite[Chap.~5, Sect.~3.1]{robert:p-adic}), we have
\[ u_n = v_n + n\Big(\tfrac{1}{p - 1} - \beta\Big) - \frac{n - S_p(n)}{p - 1} = v_n - n\beta + \frac{S_p(n)}{p - 1} ,\]
so
\[ v_n = u_n + n\beta - \frac{S_p(n)}{p - 1} \geq u_n + n\beta - \log_p(n) - 1 \]
by Lemma~\ref{digit sum lemma}, so $v_n \to \infty$, and so $d_n \to 0$.

Now let $\phi = \exp(f(0))\sum_{n = 0}^\infty d_n\binf{n}$, noting that $\exp(f(0))$ is well defined because $f(0) \in \expdom$. Then
\begin{align*}
\actcorr(\phi) = \exp(t)\prodcorr(\phi) = \exp(t)\exp(f(0))\sum_{n = 0}^\infty d_n\,\frac{t^n}{n!} &= \exp(t)\exp(f(0))\exp(g(t)) \\
&= \exp(f(0))\exp(f(t) - f(0)) \\
&= \gexp(f(t)) .
\end{align*}
The uniqueness of $\phi$ is immediate from denseness and continuity.
\end{proof}

\section{A $2$-variable $p$-adic transform} \label{sec: 2-var}

Fix a prime $p$. We introduce a $2$-variable $p$-adic transform that unites the two transforms $\calS$ and $\inttmap$ introduced in \cite{buckingham:factorial}. It also allows us to generalize the notation $\calS^m$ found in \cite{buckingham:factorial}, where $m \in \Z$, to $\calS^y$ for an arbitrary $y \in \Zp$, a generalization that will be necessary later in the paper.

Let $\phi \in \cfns{\Zp}{\Cp}$. For $n \geq 0$, define
\defmap{\twovar{\phi}[n]}{\Zp \times \Zp}{\Cp}{(x,y)}{\sum_{k = 0}^n (-1)^k k!\,\ch{y}{k}\ch{x}{k}\phi(x - k)}[,]
which is continuous, each of the finitely many terms being given by a continuous function in $x$ multiplied by a polynomial expression in $x$ and $y$. If $m \geq n$, then
\begin{align}
|\twovar{\phi}[m](x,y) - \twovar{\phi}[n](x,y)| &= \left|\sum_{k = n + 1}^m (-1)^k k!\,\ch{y}{k}\ch{x}{k}\phi(x - k)\right| \nonumber \\
&\leq |(n + 1)!|\,\|\phi\| \label{twovar continuity prep}
\end{align}
for all $x,y \in \Zp$, so the theorem in \cite[Chap.~4, Sect.~2.1]{robert:p-adic} shows that we have a continuous function $\twovar{\phi} : \Zp \times \Zp \ra \Cp$ such that
\[ \twovar{\phi}(x,y) = \sum_{k = 0}^\infty (-1)^k k!\,\ch{y}{k}\ch{x}{k}\phi(x - k) \]
for all $x,y \in \Zp$.

Being continuous as a function on $\Zp \times \Zp$, $\twovar{\phi}$ is, in particular, continuous in each of its two variables individually. Continuity in the first variable says that for a fixed $y \in \Zp$, the function
\bea
\Zp &\ra& \Cp \\*
x &\mapsto& \twovar{\phi}(x,y)
\eea
is continuous, so we may define a $\Cp$-linear map
\begin{equation} \label{def of calS}
\begin{split}
\calS^y : \cfns{\Zp}{\Cp}\,\,\, &\ra \,\,\, \cfns{\Zp}{\Cp} \\*
\phi\,\,\, &\mapsto \,\,\, (x \mapsto \twovar{\phi}(x,y)) .
\end{split}
\end{equation}
We let $\calS = \calS^1$, so that
\[ \calS(\phi)(x) = \twovar{\phi}(x,1) = \phi(x) - x\phi(x - 1) .\]

Letting the second variable of $\twovar{\phi}$ vary instead, we have, for each $x \in \Zp$, a function
\bea
\Zp &\ra& \Cp \\*
y &\mapsto& \twovar{\phi}(x,y) .
\eea
This again is continuous, but examining its Mahler series,
\[ \sum_{k = 0}^\infty \left((-1)^k k!\,\ch{x}{k}\phi(x - k)\right)\binf{k} \]
(remember that $x$ is fixed here), we find further that it is locally analytic. This fact generalizes \cite[Prop.~6.4]{buckingham:factorial}, which was the special case $x = -1$, and the case of a general $x \in \Zp$ is proven in exactly the same way except that what is called $\psi(-1 - n)$ in the proof of \cite[Prop.~6.4]{buckingham:factorial} would be $(-1)^k\ch{x}{k}\phi(x - k)$ in the present context. The key result used is \cite[Cor.~I.4.8]{colmez:fonctions}; see also \cite[Sect.~10, Cor.~1]{amice:interpolation}.

Hence, denoting the $\Cp$-vector space of locally analytic functions $\Zp \ra \Cp$ by $\lan{\Zp}{\Cp}$, we have a $\Cp$-linear map
\defmap{\inttmap^x}{\cfns{\Zp}{\Cp}}{\lan{\Zp}{\Cp}}{\phi}{(y \mapsto \twovar{\phi}(x,y))}[.]
Immediately from the definitions,
\beq \label{inttmap-calS switch}
\inttmap^x(\phi)(y) = \twovar{\phi}(x,y) = \calS^y(\phi)(x)
\eeq
for all $\phi \in \cfns{\Zp}{\Cp}$ and all $x,y \in \Zp$. We set $\inttmap = \inttmap^{-1}$, so that
\[ \inttrans{\phi}(y) = \sum_{k = 0}^\infty k!\,\phi(-1 - k)\binf{k} ,\]
and obtain the special case
\beq \label{inttmap and calS relationship}
\inttrans{\phi}(y) = \calS^y(\phi)(-1)
\eeq
for all $\phi \in \cfns{\Zp}{\Cp}$ and all $y \in \Zp$.

\begin{prop} \label{continuity of y mapsto calS^y(phi) and x mapsto inttmap^x(phi)}
Let $\phi \in \cfns{\Zp}{\Cp}$. The maps
\bea
\Zp &\ra& \cfns{\Zp}{\Cp} \\*
y &\mapsto& \calS^y(\phi) \\[2ex]
\Zp &\ra& \lan{\Zp}{\Cp} \\*
x &\mapsto& \inttmap^x(\phi)
\eea
are continuous with respect to the sup-norm on each of $\cfns{\Zp}{\Cp}$ and $\lan{\Zp}{\Cp}$.
\end{prop}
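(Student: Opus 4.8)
The plan is to handle both maps at once by exploiting the uniform estimate \eqref{twovar continuity prep}, which says that $|\twovar{\phi}(x,y) - \twovar{\phi}[n](x,y)| \leq |(n + 1)!|\,\|\phi\|$ for \emph{all} $(x,y) \in \Zp \times \Zp$, with a right-hand side tending to $0$ as $n \to \infty$ since $|(n + 1)!| \to 0$ $p$-adically. In other words, the truncations $\twovar{\phi}[n]$ converge to $\twovar{\phi}$ uniformly on the whole product $\Zp \times \Zp$. So, given $\epsilon > 0$, I would first fix $n$ with $|(n + 1)!|\,\|\phi\| < \epsilon$; then, using the ultrametric inequality, every continuity estimate below reduces, up to an error $\epsilon$, to the corresponding estimate for the single function $\twovar{\phi}[n]$, which is a finite sum of continuous-times-polynomial terms.

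For the first map, I would estimate, for $y, y' \in \Zp$,
\[ \|\calS^y(\phi) - \calS^{y'}(\phi)\| = \sup_{x \in \Zp} |\twovar{\phi}(x,y) - \twovar{\phi}(x,y')| .\]
After replacing $\twovar{\phi}$ by $\twovar{\phi}[n]$, the inner difference becomes $\sum_{k = 0}^n (-1)^k k!\,\big(\ch{y}{k} - \ch{y'}{k}\big)\ch{x}{k}\phi(x - k)$. Using $|k!| \leq 1$, $|\ch{x}{k}| \leq 1$ (as $\ch{x}{k} \in \Zp$) and $|\phi(x - k)| \leq \|\phi\|$, this is bounded by $\|\phi\|\max_{0 \leq k \leq n} |\ch{y}{k} - \ch{y'}{k}|$, uniformly in $x$. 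Each of the finitely many functions $y \mapsto \ch{y}{k}$ is a polynomial, hence uniformly continuous on the compact space $\Zp$, so choosing $y'$ close enough to $y$ makes this finite maximum as small as desired.

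For the second map the computation is symmetric in the two variables: via \eqref{inttmap-calS switch} I would write
\[ \|\inttmap^x(\phi) - \inttmap^{x'}(\phi)\| = \sup_{y \in \Zp} |\twovar{\phi}(x,y) - \twovar{\phi}(x',y)| ,\]
reduce to $\twovar{\phi}[n]$, and note that now it is the factor $\ch{y}{k}$ that has absolute value at most $1$, while the $x$-dependence is carried entirely by $\ch{x}{k}\phi(x - k)$. The bound becomes $\max_{0 \leq k \leq n} |\ch{x}{k}\phi(x - k) - \ch{x'}{k}\phi(x' - k)|$, and each of the finitely many functions $x \mapsto \ch{x}{k}\phi(x - k)$ is continuous, hence uniformly continuous on $\Zp$; so taking $x'$ near $x$ makes this small. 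That the codomain here is $\lan{\Zp}{\Cp}$ causes no difficulty, since we measure it with the restriction of the sup-norm from $\cfns{\Zp}{\Cp}$.

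The only point genuinely requiring care is the interchange of the supremum (over $x$, respectively $y$) with the limit of the partial sums, and this is precisely what the uniform bound \eqref{twovar continuity prep} supplies; once the reduction to $\twovar{\phi}[n]$ is in place, everything rests on the uniform continuity of finitely many continuous functions on the compact group $\Zp$. I therefore do not anticipate a serious obstacle, the heavy lifting having already been done in establishing \eqref{twovar continuity prep}.
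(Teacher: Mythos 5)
Your proposal is correct and follows essentially the same route as the paper: both rest on the uniform bound \eqref{twovar continuity prep} to reduce continuity of the limit to continuity of the truncations $\twovar{\phi}[n]$, which are finite sums of terms controlled by the uniform continuity of $y \mapsto \ch{y}{k}$ (respectively $x \mapsto \ch{x}{k}\phi(x-k)$) on the compact space $\Zp$, and both dispose of the $\lan{\Zp}{\Cp}$ codomain by viewing it inside $\cfns{\Zp}{\Cp}$ with the sup-norm. The only cosmetic difference is that the paper packages the final passage to the limit as an appeal to the cited theorem on uniform limits of continuous functions, whereas you inline that step as an explicit ultrametric $\epsilon$-argument.
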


\begin{proof}
We begin with the continuity of the first map. For a fixed $k \in \Z_{\geq 0}$, the map
\bea
\Zp &\ra& \cfns{\Zp}{\Cp} \\*
y &\mapsto& \left(x \mapsto (-1)^k k!\,\ch{y}{k}\ch{x}{k}\phi(x - k)\right)
\eea
is continuous. Indeed, if $y_1,y_2 \in \Zp$ and $x$ is also in $\Zp$,
\begin{align}
\Bigg|(-1)^k k!\,\ch{y_1}{k}\ch{x}{k}\phi(x - k) &- (-1)^k k!\,\ch{y_2}{k}\ch{x}{k}\phi(x - k)\Bigg| \nonumber \\*
&= |k!|\,\Bigg|\ch{x}{k}\Bigg|\,|\phi(x - k)|\,\Bigg|\ch{y_1}{k} - \ch{y_2}{k}\Bigg| \nonumber \\
&\leq \|\phi\|\,\Bigg|\ch{y_1}{k} - \ch{y_2}{k}\Bigg| , \label{ch{y_1}{k} - ch{y_2}{k}}
\end{align}
and the uniform continuity of $\binf{k}$ guarantees that for any $\eps > 0$, there is $\delta > 0$ such that the real number in (\ref{ch{y_1}{k} - ch{y_2}{k}}) is less than $\eps$ whenever $|y_1 - y_2| < \delta$. Hence, for any given $n \in \Z_{\geq 0}$, the map
\defmap{f_n}{\Zp}{\cfns{\Zp}{\Cp}}{y}{\Big(x \mapsto \twovar{\phi}[n](x,y)\Big)}
is a sum of $n + 1$ continuous maps as above ($k = 0,\ldots,n$) and is therefore continuous itself. Further, if $m,n \in \Z_{\geq 0}$ and $m \geq n$, then
\begin{align*}
\sup_{y \in \Zp} \|f_m(y) - f_n(y)\| &= \sup_{y \in \Zp} \left(\sup_{x \in \Zp} |f_m(y)(x) - f_n(y)(x)|\right) \\
&= \sup_{y \in \Zp} \left(\sup_{x \in \Zp} \Big|\twovar{\phi}[m](x,y) - \twovar{\phi}[n](x,y)\Big|\right) \\
&\leq |(n + 1)!|\,\|\phi\| \eqcom{by (\ref{twovar continuity prep})} ,
\end{align*}
and this tends to $0$ as $n \to \infty$. Therefore, applying again the theorem in \cite[Chap.~4, Sect.~2.1]{robert:p-adic}, this time with $\cfns{\Zp}{\Cp}$ as the codomain instead of $\Cp$, we see that the map obtained as the pointwise limit of the $f_n$ is continuous. But this map is precisely $y \mapsto (x \mapsto \twovar{\phi}(x,y))$, \ie $y \mapsto \calS^y(\phi)$.

Continuity of the second map is proven similarly by interchanging the roles of $x$ and $y$, with only these differences to be remarked upon: (i) The function $\binf{k}$ is replaced by $x \mapsto \ch{x}{k}\phi(x - k)$, which, of course, is again uniformly continuous. (ii) Because we have endowed $\lan{\Zp}{\Cp}$ with the sup-norm as well, continuity of the map $\Zp \ra \lan{\Zp}{\Cp}$ is equivalent to continuity of $\Zp \ra \lan{\Zp}{\Cp} \ra \cfns{\Zp}{\Cp}$, the second map here being inclusion, so in the application of the theorem in \cite[Chap.~4, Sect.~2.1]{robert:p-adic}, there is no harm in taking $\cfns{\Zp}{\Cp}$ rather than $\lan{\Zp}{\Cp}$ as the codomain.
\end{proof}

\subsection{Results on $\calS^y$}

\begin{prop} \label{(1 - t)^y}
If $\phi \in \cfns{\Zp}{\Cp}$ and $y \in \Zp$, then
\begin{align*}
(1 - t)^y\actcorr(\phi) &= \actcorr(\calS^y(\phi)) \\
\text{and}\quad (1 - t)^y\prodcorr(\phi) &= \prodcorr(\calS^y(\phi)) .
\end{align*}
\end{prop}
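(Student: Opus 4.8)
The plan is to prove the first identity, $(1 - t)^y\actcorr(\phi) = \actcorr(\calS^y(\phi))$, by a direct comparison of the coefficients of the two formal power series, and then to obtain the second identity essentially for free. For the deduction, I would observe that $\exp(t)$ is a unit in $\Cp\pwr{t}$, with inverse $\exp(-t)$, and that $\actcorr = \exp(t)\prodcorr$ by definition. Multiplying the first identity through by $\exp(-t)$ and rewriting $\actcorr(\calS^y(\phi)) = \exp(t)\prodcorr(\calS^y(\phi))$ then yields $(1 - t)^y\prodcorr(\phi) = \prodcorr(\calS^y(\phi))$ at once. So the whole proposition reduces to the single $\actcorr$-statement.

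To prove that statement, I would use the closed form $\actcorr(\psi) = \sum_{n \geq 0}\psi(n)\tfrac{t^n}{n!}$, valid for any $\psi \in \cfns{\Zp}{\Cp}$ and in particular for $\psi = \calS^y(\phi)$, which lies in $\cfns{\Zp}{\Cp}$ by the definition of $\calS^y$ in \eqref{def of calS}. Combining this with the binomial expansion $(1 - t)^y = \sum_{j \geq 0}(-1)^j\ch{y}{j}t^j$ and multiplying the two series, the coefficient of $t^m$ on the left is $\sum_{j = 0}^m (-1)^j\ch{y}{j}\tfrac{\phi(m - j)}{(m - j)!}$, while on the right it is $\tfrac{\calS^y(\phi)(m)}{m!}$. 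Thus the task becomes to verify, for every $m \in \Z_{\geq 0}$, the scalar identity
\[ \sum_{j = 0}^m (-1)^j\ch{y}{j}\frac{\phi(m - j)}{(m - j)!} = \frac{\calS^y(\phi)(m)}{m!} .\]

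The heart of the matter is then a short combinatorial rearrangement. Multiplying through by $m!$ and using $\tfrac{m!}{(m - j)!} = j!\,\ch{m}{j}$ turns the left-hand side into $\sum_{j = 0}^m (-1)^j j!\,\ch{y}{j}\ch{m}{j}\phi(m - j)$, which is exactly the defining sum for $\calS^y(\phi)(m) = \twovar{\phi}(m,y)$\dsh the one point to watch being that, because $m$ is a non-negative integer, $\ch{m}{k}$ vanishes for $k > m$, so the infinite sum defining $\twovar{\phi}(m,y)$ truncates to the range $0 \leq k \leq m$ and matches the left-hand side term by term.

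I do not anticipate a genuine obstacle: once the reduction to the $\actcorr$-statement is in place, everything is a formal power-series coefficient comparison, with no convergence issues, since the product of formal power series is always defined. The only points requiring care are (i) remembering that it suffices to match coefficients at the non-negative integers $m$, because those are precisely the indices of the power series, and (ii) the truncation of the defining series for $\twovar{\phi}(m,y)$ at integer $x = m$, which is what makes the two finite sums literally coincide.
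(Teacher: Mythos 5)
Your proposal is correct and matches the paper's own proof in essence: both reduce the $\prodcorr$-identity to the $\actcorr$-identity by multiplying by $\exp(\pm t)$, and both verify the $\actcorr$-identity by expanding $(1 - t)^y$ binomially, multiplying the series, and recognizing the resulting coefficient as the truncated defining sum for $\twovar{\phi}(m,y) = \calS^y(\phi)(m)$. Your explicit coefficient comparison with the rearrangement $\tfrac{m!}{(m - j)!} = j!\,\ch{m}{j}$ is exactly the computation the paper packages as the exponential-generating-function product rule, so the two arguments are the same.
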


\begin{proof}
The two claimed equalities are equivalent via multiplication by $\exp(-t)$ and $\exp(t)$. We prove the first:
\begin{align*}
(1 - t)^y\sum_{n = 0}^\infty \phi(n)\,\frac{t^n}{n!} &= \left(\sum_{n = 0}^\infty (-1)^n\ch{y}{n}t^n\right)\left(\sum_{n = 0}^\infty \phi(n)\,\frac{t^n}{n!}\right) \\
&= \left(\sum_{n = 0}^\infty (-1)^n(y)_n\,\frac{t^n}{n!}\right)\left(\sum_{n = 0}^\infty \phi(n)\,\frac{t^n}{n!}\right) \\
&= \sum_{n = 0}^\infty \left(\sum_{k = 0}^n \ch{n}{k}(-1)^k(y)_k\phi(n - k)\right)\frac{t^n}{n!} \\
&= \sum_{n = 0}^\infty \twovar{\phi}(n,y)\,\frac{t^n}{n!} \\
&= \sum_{n = 0}^\infty \calS^y(\phi)(n)\,\frac{t^n}{n!} .
\end{align*}
\end{proof}

\begin{prop} \label{calS a Zp-action}
For all $y,z \in \Zp$, $\calS^{y + z} = \calS^y \of \calS^z$.
\end{prop}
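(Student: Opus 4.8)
The plan is to transport the statement across the isomorphism $\actcorr$ into the power-series world, where composition of the operators $\calS^y$ turns into ordinary multiplication by the series $(1 - t)^y$. This reduces an identity of operators on $\cfns{\Zp}{\Cp}$ to the exponent law for $(1 - t)^{\bullet}$ recorded in (\ref{props of a(t)}).

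Concretely, I would fix $\phi \in \cfns{\Zp}{\Cp}$ and apply the first equality of Proposition~\ref{(1 - t)^y} twice. Since $\actcorr(\calS^z(\phi)) = (1 - t)^z\actcorr(\phi)$, a second application gives
\[ \actcorr\big(\calS^y(\calS^z(\phi))\big) = (1 - t)^y\actcorr(\calS^z(\phi)) = (1 - t)^y(1 - t)^z\actcorr(\phi) . \]
Next I would invoke the identity $a(t)^{y + z} = a(t)^y a(t)^z$ from (\ref{props of a(t)}), valid here with $a(t) = 1 - t \in 1 + t\Cp\pwr{t}$, to collapse the right-hand side to $(1 - t)^{y + z}\actcorr(\phi)$. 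One final application of Proposition~\ref{(1 - t)^y} rewrites this as $\actcorr(\calS^{y + z}(\phi))$, yielding
\[ \actcorr\big(\calS^y(\calS^z(\phi))\big) = \actcorr\big(\calS^{y + z}(\phi)\big) . \]

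Finally, because $\actcorr : \cfns{\Zp}{\Cp} \ra \valueegf$ is an isomorphism of $\Cp$-vector spaces, and hence injective, I would cancel $\actcorr$ to deduce $\calS^y(\calS^z(\phi)) = \calS^{y + z}(\phi)$. As $\phi$ was arbitrary, this is exactly $\calS^{y + z} = \calS^y \of \calS^z$.

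I do not expect any serious obstacle: once Proposition~\ref{(1 - t)^y} is available the argument is entirely formal. The only points needing a word of justification are that $1 - t$ indeed lies in $1 + t\Cp\pwr{t}$, so that the exponent law applies, and that $\actcorr$ may be cancelled by virtue of being an isomorphism. As a remark, the same computation runs verbatim with $\prodcorr$ in place of $\actcorr$ using the second equality of Proposition~\ref{(1 - t)^y}, so either correspondence delivers the result.
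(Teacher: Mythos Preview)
Your proposal is correct and follows essentially the same approach as the paper: transport via $\actcorr$ (using Proposition~\ref{(1 - t)^y}), invoke the exponent law $(1 - t)^{y + z} = (1 - t)^y(1 - t)^z$, and conclude by injectivity of $\actcorr$. The paper merely reverses the direction of the chain of equalities, starting from $\actcorr(\calS^{y + z}(\phi))$ and arriving at $\actcorr(\calS^y(\calS^z(\phi)))$, but the content is identical.
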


\begin{proof}
We use Proposition~\ref{(1 - t)^y} in conjunction with the associativity of multiplication in the power series ring $\Cp\pwr{t}$. Specifically, if $\phi \in \cfns{\Zp}{\Cp}$,
\begin{align*}
\actcorr(\calS^{y + z}(\phi)) &= (1 - t)^{y + z}\actcorr(\phi) \\
&= (1 - t)^y(1 - t)^z\actcorr(\phi) \\
&= (1 - t)^y\actcorr(\calS^z(\phi)) \\
&= \actcorr(\calS^y(\calS^z(\phi))) .
\end{align*}
\end{proof}

\begin{cor} \label{isometric automorphism}
For each $y \in \Zp$, the map $\calS^y : \cfns{\Zp}{\Cp} \ra \cfns{\Zp}{\Cp}$ is an isometric automorphism of the $\Cp$-Banach space $\cfns{\Zp}{\Cp}$ with inverse $\calS^{-y}$.
\end{cor}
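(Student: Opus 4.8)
The plan is to derive the automorphism property from the composition law of Proposition~\ref{calS a Zp-action} and to treat the isometry as a separate, essentially norm-theoretic, matter.

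First I would record that $\calS^0 = \id$. This is immediate from Proposition~\ref{(1 - t)^y}: taking $y = 0$ there gives $\prodcorr(\calS^0(\phi)) = (1 - t)^0\prodcorr(\phi) = \prodcorr(\phi)$, and since $\prodcorr$ is an isomorphism, hence injective, we get $\calS^0(\phi) = \phi$ for every $\phi$. Combining this with Proposition~\ref{calS a Zp-action} yields $\calS^y \of \calS^{-y} = \calS^{y + (-y)} = \calS^0 = \id$ and, symmetrically, $\calS^{-y} \of \calS^y = \id$. As each $\calS^y$ is $\Cp$-linear by its definition in \eqref{def of calS}, these two identities exhibit $\calS^y$ as a $\Cp$-linear bijection with inverse $\calS^{-y}$, i.e.\ an automorphism of the underlying $\Cp$-vector space.

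The substantive point is the isometry $\|\calS^y(\phi)\| = \|\phi\|$, which I would first prove as the one-sided bound $\|\calS^y(\phi)\| \leq \|\phi\|$. Recall that the sup-norm on $\cfns{\Zp}{\Cp}$ is the supremum of the absolute values of the Mahler coefficients $(\nabla^n\phi)(0)$, and that these are exactly the $\tfrac{t^n}{n!}$-coefficients of $\prodcorr(\phi)$. By Proposition~\ref{(1 - t)^y} we have $\prodcorr(\calS^y(\phi)) = (1 - t)^y\prodcorr(\phi)$, and since $(1 - t)^y = \sum_{k} (-1)^k(y)_k\tfrac{t^k}{k!}$, multiplying the two exponential generating functions shows that the $n$-th Mahler coefficient of $\calS^y(\phi)$ is the binomial convolution $\sum_{k=0}^n \ch{n}{k}(-1)^k(y)_k(\nabla^{n-k}\phi)(0)$. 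Here $y \in \Zp$ forces $(y)_k \in \Zp$, so $|(y)_k| \leq 1$, while $\ch{n}{k} \in \Z$ gives $|\ch{n}{k}| \leq 1$; the ultrametric inequality then bounds this sum by $\max_k |(\nabla^{n-k}\phi)(0)| \leq \|\phi\|$, whence $\|\calS^y(\phi)\| \leq \|\phi\|$. (Alternatively, one identifies $\calS^y(\phi) = \psi_y \myst \phi$, where $\psi_y \in \cfns{\Zp}{\Cp}$ has Mahler coefficients $(-1)^n(y)_n$ of norm $\leq 1$, and applies Proposition~\ref{myst and norm} directly; this requires the observation that $(1-t)^y$ lies in $\decayegf$, which follows from $|(y)_n| = |n!|\,|\ch{y}{n}| \to 0$.)

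Finally I would upgrade the bound to an equality using the automorphism structure already in hand: applying the inequality to $-y$ in place of $y$ and to $\calS^y(\phi)$ in place of $\phi$ gives $\|\phi\| = \|\calS^{-y}(\calS^y(\phi))\| \leq \|\calS^y(\phi)\| \leq \|\phi\|$, forcing equality throughout, so $\calS^y$ is an isometry. I do not anticipate a real obstacle; the one place demanding care is the norm estimate on the binomial convolution, where it is \emph{essential} that $y$ be a $p$-adic integer, so that every coefficient $(y)_k$ and $\ch{n}{k}$ lies in $\pint$ and the non-archimedean triangle inequality applies.
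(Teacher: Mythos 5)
Your proof is correct and takes essentially the same approach as the paper: invertibility from $\calS^0 = \id$ combined with Proposition~\ref{calS a Zp-action}, then the one-sided bound $\|\calS^y(\phi)\| \leq \|\phi\|$, upgraded to an equality via $\calS^{-y} \of \calS^y = \id$ exactly as you do. The only difference is cosmetic: the paper reads the bound straight off the defining series $\calS^y(\phi)(x) = \sum_{k}(-1)^k k!\,\ch{y}{k}\ch{x}{k}\phi(x - k)$, whose terms all have absolute value at most $\|\phi\|$ because $k!\ch{y}{k} = (y)_k$ and $\ch{x}{k}$ lie in $\Zp$, whereas you carry out the equivalent estimate on Mahler coefficients via $\prodcorr$.
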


\begin{proof}
The invertibility follows immediately from the proposition upon observation of the fact that $\calS^0 = \operatorname{id}$, the identity map. As for isometry, the definition of $\calS^y(\phi)$ shows that $\|\calS^y(\phi)\| \leq \|\phi\|$ for all $y$ and all $\phi$, and then the relation $\calS^{-y} \of \calS^y = \operatorname{id}$ yields $\|\calS^y(\phi)\| = \|\phi\|$.
\end{proof}

The next corollary generalizes \cite[Theorem~6.8(ii)]{buckingham:factorial}, and in fact this generalization will be crucial later.

\begin{cor} \label{calS and shift for y in Zp}
If $\phi \in \cfns{\Zp}{\Cp}$ and $y,s \in \Zp$, then
\[ \inttrans{\calS^y(\phi)}(s) = \inttrans{\phi}(s + y) .\]
\end{cor}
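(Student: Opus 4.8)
The plan is to derive the identity as a purely formal consequence of two facts already established in this section: the relation \eqref{inttmap and calS relationship}, which expresses $\inttmap$ in terms of the $\calS^y$ evaluated at $-1$, and Proposition~\ref{calS a Zp-action}, which says that $y \mapsto \calS^y$ is a $\Zp$-action, i.e.\ $\calS^{y + z} = \calS^y \of \calS^z$ for all $y,z \in \Zp$. The entire argument is a short chain of substitutions, so no genuine estimation or convergence work is needed here; the analytic content has already been absorbed into the proofs of those two prior results.

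Concretely, I would first apply \eqref{inttmap and calS relationship} with the input function taken to be $\calS^y(\phi)$ and the variable taken to be $s$, obtaining
\[ \inttrans{\calS^y(\phi)}(s) = \calS^s(\calS^y(\phi))(-1) .\]
Next I would rewrite $\calS^s(\calS^y(\phi)) = (\calS^s \of \calS^y)(\phi)$ and invoke Proposition~\ref{calS a Zp-action}, with its $y,z$ specialized to $s,y$ respectively (both of which lie in $\Zp$, so the proposition applies), to conclude $\calS^s \of \calS^y = \calS^{s + y}$ and hence $\calS^s(\calS^y(\phi))(-1) = \calS^{s + y}(\phi)(-1)$. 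Finally I would apply \eqref{inttmap and calS relationship} a second time, now with exponent $s + y$, reading it in the reverse direction to get $\calS^{s + y}(\phi)(-1) = \inttrans{\phi}(s + y)$, which is the desired equality.

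The only points warranting a moment's care are bookkeeping rather than substance: that the hypothesis $s,y \in \Zp$ is exactly what licenses the use of Proposition~\ref{calS a Zp-action} (whose additivity is stated for exponents in $\Zp$), and that the order of composition is respected when passing between $\calS^s(\calS^y(\phi))$ and $(\calS^s \of \calS^y)(\phi)$. Since composition here is commutative anyway by the additivity of the action, even a transposition of the two exponents would do no harm. Thus I do not anticipate any real obstacle; the statement is precisely the generalization of \cite[Theorem~6.8(ii)]{buckingham:factorial} from integer to $\Zp$-valued $y$, and it falls out immediately once the $\Zp$-action structure on the $\calS^y$ is in hand.
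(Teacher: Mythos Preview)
Your proposal is correct and follows essentially the same route as the paper's own proof: apply \eqref{inttmap and calS relationship} to $\calS^y(\phi)$, use Proposition~\ref{calS a Zp-action} to collapse $\calS^s \of \calS^y$ into $\calS^{s+y}$, and then read \eqref{inttmap and calS relationship} backwards. There is nothing to add.
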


\begin{proof}
If $\psi \in \cfns{\Zp}{\Cp}$ and $s \in \Zp$, then $\inttrans{\psi}(s) = \calS^s(\psi)(-1)$ by (\ref{inttmap and calS relationship}). Taking $\psi = \calS^y(\phi)$ gives
\begin{align*}
\inttrans{\calS^y(\phi)}(s) &= \calS^s(\calS^y(\phi))(-1) \\
&= \calS^{s + y}(\phi)(-1) \eqcom{by the proposition} \\
&= \inttrans{\phi}(s + y) .
\end{align*}
\end{proof}

\subsection{$\calS^y$ interpreted as convolution with $(\one - \xtox)^{\myst y}$}

Recall that $\one : \Zp \ra \Cp$ and $\xtox : \Zp \ra \Cp$ are the functions defined by
\begin{align*}
\one : x &\mapsto 1 ,\\
\xtox : x &\mapsto x .
\end{align*}
With this notation in hand, if $\phi \in \cfns{\Zp}{\Cp}$, then
\beq \label{calS and one - xtox}
\calS(\phi)(x) = \phi(x) - x\phi(x - 1) = \big((\one - \xtox) \myst \phi\big)(x)
\eeq
for all $x \in \Zp$. Later, we will give parallel expositions of complex and $p$-adic theories of integral transforms, a parallelism that will lead to $p$-adic interpolation of incomplete $\Gamma$-functions and other functions given by the incomplete Mellin transform. To emphasize this point of view, it will be benefical to build on the observation above that $\calS(\phi) = (\one - \xtox) \myst \phi$. What follows generalizes \cite[Prop.~5.5]{buckingham:factorial}.

The function $\one - \xtox$ corresponds, via $\prodcorr$, to the power series $1 - t$, which has a multiplicative inverse, so $\one - \xtox$ is invertible with respect to $\myst$. Its inverse is the function called $\qf$ in \cite{buckingham:factorial} and has Mahler series $\qf = \sum_{n = 0}^\infty n!\binf{n}$, and one also has $\qf = \calS^{-1}(\one)$. The function $\qf$ will appear in the proof of a $p$-adic integration by parts in Section~\ref{sec: int by parts}.

If $m \in \Z$, define $(\one - \xtox)^{(\myst m)}$ to be the $m$-fold convolution of $\one - \xtox$, as follows:
\[ (\one - \xtox)^{(\myst m)} =
\begin{cases}
\underbrace{(\one - \xtox) \myst \cdots \myst (\one - \xtox)}_m & \text{if $m \geq 0$,} \\
\underbrace{\qf \myst \cdots \myst \qf}_{-m} & \text{if $m < 0$.}
\end{cases}
\]

We now extend this definition. For a given $y \in \Zp$, define a function $(\one - \xtox)^{\myst y} \in \cfns{\Zp}{\Cp}$ by
\[ (\one - \xtox)^{\myst y} = \calS^y(\one) .\]
(We deliberately omit the parentheses in the exponent $\myst y$ in order to differentiate the notation from that for the $m$-fold convolution.)

\begin{prop} \label{extending m-fold conv of one - xtox}
The map
\bea
\Z &\ra& \cfns{\Zp}{\Cp} \\*
m &\mapsto& (\one - \xtox)^{(\myst m)}
\eea
has a unique extension to a continuous map $\Zp \ra \cfns{\Zp}{\Cp}$, and this extension is $y \mapsto (\one - \xtox)^{\myst y}$.
\end{prop}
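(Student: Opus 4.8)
The plan is to take $y \mapsto (\one - \xtox)^{\myst y} = \calS^y(\one)$ as the proposed extension and to verify the three things required: that it is continuous, that it agrees with $m \mapsto (\one - \xtox)^{(\myst m)}$ on $\Z$, and that it is the only continuous map doing so. Continuity is already in hand: it is exactly the content of Proposition~\ref{continuity of y mapsto calS^y(phi) and x mapsto inttmap^x(phi)}, applied with $\phi = \one$, that $y \mapsto \calS^y(\one)$ is continuous from $\Zp$ to $\cfns{\Zp}{\Cp}$ with respect to the sup-norm. Uniqueness is then automatic: $\Z$ is dense in $\Zp$, and $\cfns{\Zp}{\Cp}$ is a Banach space and in particular Hausdorff, so two continuous maps $\Zp \to \cfns{\Zp}{\Cp}$ that coincide on $\Z$ coincide everywhere.

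The substantive step is to check that $\calS^m(\one) = (\one - \xtox)^{(\myst m)}$ for every $m \in \Z$. First I would record that $\one$ is the identity element for the convolution $\myst$: indeed $\prodcorr(\one) = 1$, the unit of $\Cp\pwr{t}$, so $\one$ is the $\myst$-identity because $\prodcorr$ is an algebra isomorphism. For $m \geq 0$, Proposition~\ref{calS a Zp-action} gives $\calS^m = \calS^1 \of \cdots \of \calS^1$ ($m$ factors), and since $\calS(\phi) = (\one - \xtox) \myst \phi$ by~(\ref{calS and one - xtox}), iterating yields $\calS^m(\one) = (\one - \xtox) \myst \cdots \myst (\one - \xtox) \myst \one$, which equals the $m$-fold convolution $(\one - \xtox)^{(\myst m)}$ once the trailing $\one$ is absorbed by neutrality. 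For $m < 0$, I would use $\calS^{-1}(\one) = \qf$ together with the fact that $\calS^{-1}(\phi) = \qf \myst \phi$, which follows from $\qf$ being the $\myst$-inverse of $\one - \xtox$, so that Proposition~\ref{calS a Zp-action} gives $\calS^m(\one) = \qf \myst \cdots \myst \qf$ ($-m$ factors), matching the definition of $(\one - \xtox)^{(\myst m)}$ in the case $m < 0$.

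There is no serious obstacle here; the genuinely hard analytic input—continuity of $y \mapsto \calS^y(\one)$—has already been dispatched in Proposition~\ref{continuity of y mapsto calS^y(phi) and x mapsto inttmap^x(phi)}. The only thing demanding care is the bookkeeping in the agreement step: confirming that $\one$ is neutral for $\myst$, that $\calS$ and $\calS^{-1}$ act respectively as convolution by $\one - \xtox$ and by $\qf$, and that the two sign cases in the definition of $(\one - \xtox)^{(\myst m)}$ are each reproduced correctly by the group law of Proposition~\ref{calS a Zp-action}. Once these identifications are in place the proposition follows immediately.
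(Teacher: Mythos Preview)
Your proof is correct and follows the same overall structure as the paper's: continuity via Proposition~\ref{continuity of y mapsto calS^y(phi) and x mapsto inttmap^x(phi)}, uniqueness via density of $\Z$ in $\Zp$, and then agreement on integers. The only difference is in how that last step is carried out. You iterate, using Proposition~\ref{calS a Zp-action} to write $\calS^m$ as an $|m|$-fold composition of $\calS$ or $\calS^{-1}$ and then invoking $\calS(\phi) = (\one - \xtox) \myst \phi$ and $\calS^{-1}(\phi) = \qf \myst \phi$ term by term. The paper instead passes through the algebra isomorphism $\prodcorr$ in one stroke: for $m \geq 0$ one has $\prodcorr((\one - \xtox)^{(\myst m)}) = \prodcorr(\one - \xtox)^m = (1 - t)^m = (1 - t)^m\prodcorr(\one) = \prodcorr(\calS^m(\one))$ by Proposition~\ref{(1 - t)^y}, and the negative case follows by the same computation with $\qf$ in place of $\one - \xtox$. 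Your route is slightly more hands-on but perfectly valid; the paper's is a touch cleaner since it avoids splitting into cases and iterating, but both rest on the same ingredients.
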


\begin{proof}
The uniqueness is immediate via density. To show that $(\one - \xtox)^{\myst m} = (\one - \xtox)^{(\myst m)}$ for all $m \in \Z$, we can reduce easily to the case $m \geq 0$ and then proceed as follows:
\begin{align*}
\prodcorr((\one - \xtox)^{(\myst m)}) &= \prodcorr(\one - \xtox)^m \\
&= (1 - t)^m \\
&= (1 - t)^m\prodcorr(\one) \\
&= \prodcorr(\calS^m(\one)) \eqcom{by Proposition~\ref{(1 - t)^y}} \\
&= \prodcorr((\one - \xtox)^{\myst m}) \eqcom{by definition.}
\end{align*}
\end{proof}

\begin{prop} \label{calS acting via one - xtox}
For all $\phi \in \cfns{\Zp}{\Cp}$ and all $y \in \Zp$,
\[ \calS^y(\phi) = (\one - \xtox)^{\myst y} \myst \phi .\]
\end{prop}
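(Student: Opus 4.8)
The plan is to reduce the claimed identity $\calS^y(\phi) = (\one - \xtox)^{\myst y} \myst \phi$ to an identity of power series under the algebra isomorphism $\prodcorr$ (or equivalently $\actcorr$), where convolution $\myst$ becomes ordinary multiplication. The key structural facts already available are Proposition~\ref{(1 - t)^y}, which tells us that applying $\calS^y$ corresponds to multiplying by $(1 - t)^y$, and the definition $(\one - \xtox)^{\myst y} = \calS^y(\one)$ together with the product rule \eqref{conv prod rule} for $\prodcorr$.

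First I would apply $\prodcorr$ to the right-hand side and use that $\prodcorr$ is an algebra homomorphism for $\myst$:
\[ \prodcorr\big((\one - \xtox)^{\myst y} \myst \phi\big) = \prodcorr\big((\one - \xtox)^{\myst y}\big)\,\prodcorr(\phi) . \]
Next, since $(\one - \xtox)^{\myst y} = \calS^y(\one)$ by definition, the second equality in Proposition~\ref{(1 - t)^y} applied to $\one$ gives
\[ \prodcorr\big((\one - \xtox)^{\myst y}\big) = \prodcorr(\calS^y(\one)) = (1 - t)^y\,\prodcorr(\one) = (1 - t)^y , \]
using $\prodcorr(\one) = 1$ (which follows from $\nabla\one = 0$, so only the $n = 0$ Mahler/difference term survives). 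Combining these,
\[ \prodcorr\big((\one - \xtox)^{\myst y} \myst \phi\big) = (1 - t)^y\,\prodcorr(\phi) . \]
On the other hand, the second equality of Proposition~\ref{(1 - t)^y} applied directly to $\phi$ gives $\prodcorr(\calS^y(\phi)) = (1 - t)^y\,\prodcorr(\phi)$, so the two sides have the same image under $\prodcorr$.

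Finally I would invoke the injectivity of $\prodcorr$ — it is a $\Cp$-vector space isomorphism $\cfns{\Zp}{\Cp} \racan \decayegf$ as recorded in Section~\ref{sec: pwr series corrs} — to conclude $\calS^y(\phi) = (\one - \xtox)^{\myst y} \myst \phi$. I do not anticipate a genuine obstacle here: everything rests on already-established results, and the only point needing a moment's care is confirming that $(\one - \xtox)^{\myst y}$ lies in the domain on which $\myst$ and $\prodcorr$ behave as stated, which is automatic since $\calS^y(\one) \in \cfns{\Zp}{\Cp}$ by Corollary~\ref{isometric automorphism}. The proof is thus essentially a one-line translation into the power-series picture, with the product rule \eqref{conv prod rule} doing the real work.
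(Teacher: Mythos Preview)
Your proof is correct and essentially identical to the paper's: both apply $\prodcorr$ to $(\one - \xtox)^{\myst y} \myst \phi = \calS^y(\one) \myst \phi$, use the multiplicativity of $\prodcorr$ together with Proposition~\ref{(1 - t)^y} (once for $\one$ and once for $\phi$) to reach $(1 - t)^y\prodcorr(\phi) = \prodcorr(\calS^y(\phi))$, and conclude by injectivity of $\prodcorr$. The only cosmetic difference is that the paper writes $\prodcorr(\one)\prodcorr(\phi) = \prodcorr(\one \myst \phi) = \prodcorr(\phi)$ where you simply observe $\prodcorr(\one) = 1$.
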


\begin{proof}
The assertion is equivalent to showing that $\calS^y(\phi) = \calS^y(\one) \myst \phi$ for all $y \in \Zp$, and for this, we can use the bijection $\prodcorr$. Specifically,
\begin{align*}
\prodcorr(\calS^y(\one) \myst \phi) &= \prodcorr(\calS^y(\one))\prodcorr(\phi) \\
&= (1 - t)^y\prodcorr(\one)\prodcorr(\phi) \eqcom{by Proposition~\ref{(1 - t)^y}} \\
&= (1 - t)^y\prodcorr(\one \myst \phi) \\
&= (1 - t)^y\prodcorr(\phi) \\
&= \prodcorr(\calS^y(\phi)) \eqcom{by Proposition~\ref{(1 - t)^y} again.}
\end{align*}
\end{proof}

The following generalizes \cite[Theorem~6.12]{buckingham:factorial}, which applied in the special case where $x = -1$.

\begin{prop} \label{inttrans as an integral transform}
If $\phi \in \cfns{\Zp}{\Cp}$ and $x,y \in \Zp$, then
\begin{align*}
\inttmap^x(\phi)(y) &= \int_{\Zp} (\one - \xtox)^{\myst y}\dx[\mu_{\phi,x}] \\
&= \int_{\Zp} \big((\one - \xtox)^{\myst y} \myst \phi\big)\dx[\dirac[x]] .
\end{align*}
\end{prop}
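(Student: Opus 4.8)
The plan is to prove the two displayed equalities in turn, reducing everything to the Mahler-coefficient description of the objects involved and to the translation between $\calS^y$ and $\inttmap^x$ already recorded in (\ref{inttmap-calS switch}). The starting point is that, by (\ref{inttmap-calS switch}), the left-hand side equals $\twovar{\phi}(x,y)$, for which we have the explicit series
\[ \twovar{\phi}(x,y) = \sum_{k = 0}^\infty (-1)^k k!\,\ch{y}{k}\ch{x}{k}\phi(x - k) .\]
So the whole proposition amounts to recognizing this single sum as each of the two integrals.

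For the first equality, I would first identify the Mahler series of the function $(\one - \xtox)^{\myst y} = \calS^y(\one)$. Evaluating at a point $z \in \Zp$, the defining formula for $\twovar{\one}$ gives $\calS^y(\one)(z) = \sum_{k = 0}^\infty (-1)^k k!\,\ch{y}{k}\ch{z}{k}$, and since $\ch{z}{k} = \binf{k}(z)$, this exhibits
\[ (\one - \xtox)^{\myst y} = \sum_{k = 0}^\infty (-1)^k k!\,\ch{y}{k}\binf{k} \]
as a Mahler series (it converges in sup-norm, confirming continuity, because $|k!| \to 0$ while $|\ch{y}{k}| \leq 1$). On the other hand, the measure $\mu_{\phi,x}$ was defined through the correspondence $\mu_{\phi,x} \leftrightarrow \sum_n \ch{x}{n}\phi(x - n)t^n$, so that $\mu_{\phi,x}(\binf{n}) = \ch{x}{n}\phi(x - n)$. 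Feeding the Mahler series of $(\one - \xtox)^{\myst y}$ into $\mu_{\phi,x}$ via the pairing $\sum_n a_n\binf{n} \mapsto \sum_n a_n\mu_{\phi,x}(\binf{n})$ then yields
\[ \int_{\Zp} (\one - \xtox)^{\myst y}\dx[\mu_{\phi,x}] = \sum_{k = 0}^\infty (-1)^k k!\,\ch{y}{k}\ch{x}{k}\phi(x - k) = \twovar{\phi}(x,y) ,\]
which is the first equality.

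The second equality is then essentially formal. By Proposition~\ref{calS acting via one - xtox} we have $(\one - \xtox)^{\myst y} \myst \phi = \calS^y(\phi)$, and integrating a function against the Dirac measure $\dirac[x]$ evaluates it at $x$, so
\[ \int_{\Zp} \big((\one - \xtox)^{\myst y} \myst \phi\big)\dx[\dirac[x]] = \calS^y(\phi)(x) = \twovar{\phi}(x,y) ,\]
again by (\ref{inttmap-calS switch}). Matching this with the outcome of the previous paragraph closes the chain.

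I expect the only point requiring care\dsh rather than a genuine obstacle\dsh to be the justification for evaluating $\mu_{\phi,x}$ term by term on the Mahler series of $(\one - \xtox)^{\myst y}$. This is legitimate precisely because that Mahler series converges uniformly (its coefficients tend to $0$) and $\mu_{\phi,x}$ is a bounded linear functional, so the interchange of $\mu_{\phi,x}$ with the infinite sum is exactly the pairing $\sum_n a_n\binf{n} \mapsto \sum_n a_n\mu_{\phi,x}(\binf{n})$ built into the measure/power-series correspondence of Section~\ref{sec: p-adic integrals}; the resulting scalar series converges since $|a_n\mu_{\phi,x}(\binf{n})| \leq |n!|\,\|\phi\| \to 0$.
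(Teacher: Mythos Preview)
Your proposal is correct and follows essentially the same approach as the paper's proof: identify the Mahler coefficients of $(\one - \xtox)^{\myst y}$, pair them with the defining coefficients of $\mu_{\phi,x}$ to obtain the first integral, and then use Proposition~\ref{calS acting via one - xtox} together with the evaluation property of $\dirac[x]$ for the second. The only cosmetic difference is that the paper extracts the Mahler coefficients via $\prodcorr((\one - \xtox)^{\myst y}) = (1 - t)^y$, whereas you read them off directly from the defining series $\twovar{\one}(z,y)$; both yield $(-1)^k k!\,\ch{y}{k} = (-1)^k (y)_k$.
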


\begin{proof}
If $y \in \Zp$,
\begin{align*}
\sum_{n = 0}^\infty \Big(\nabla^n((\one - \xtox)^{\myst y})\Big)(0)\,\frac{t^n}{n!} &= \prodcorr((\one - \xtox)^{\myst y}) \\
&= \prodcorr(\one - \xtox)^y \\
&= (1 - t)^y \\
&= \sum_{n = 0}^\infty (-1)^n\ch{y}{n}t^n \\
&= \sum_{n = 0}^\infty (-1)^n(y)_n\,\frac{t^n}{n!} ,
\end{align*}
so $\Big(\nabla^n((\one - \xtox)^{\myst y})\Big)(0) = (-1)^n(y)_n$. Therefore,
\begin{align*}
\inttmap^x(\phi)(y) &= \sum_{k = 0}^\infty (-1)^k(y)_k\ch{x}{k}\phi(x - k) \\
&= \sum_{k = 0}^\infty \Big(\nabla^k((\one - \xtox)^{\myst y})\Big)(0)\ch{x}{k}\phi(x - k) \\
&= \int_{\Zp} (\one - \xtox)^{\myst y}\dx[\mu_{\phi,x}] .
\end{align*}
For the equality with the other integral,
\begin{align*}
\inttmap^x(\phi)(y) &= \calS^y(\phi)(x) \\
&= \big((\one - \xtox)^{\myst y} \myst \phi\big)(x) \eqcom{by Proposition~\ref{calS acting via one - xtox}} \\
&= \int_{\Zp} \big((\one - \xtox)^{\myst y} \myst \phi\big)\dx[\dirac[x]] .
\end{align*}
\end{proof}

\subsection{A $p$-adic integration by parts} \label{sec: int by parts}

If $n \in \Z$, define $(\xtox - \one)^{\myst n} \in \cfns{\Zp}{\Cp}$ by
\[ (\xtox - \one)^{\myst n} = (-1)^n\calS^n(\one) .\]
As with the proof of Proposition~\ref{extending m-fold conv of one - xtox}, one may show that $(\xtox - \one)^{\myst n}$ is simply $n$-fold convolution of $\xtox - \one$ with itself (with the obvious definition if $n < 0$).

By Corollary~\ref{isometric automorphism}, $\|(\xtox - \one)^{\myst n}\| = \|\one\| = 1$ for all $n \in \Z$, so if $N \in \Z$ and $(a_n)_{n \leq N}$ is a family of elements of $\Cp$ such that $a_n \to 0$ as $n \to -\infty$, then we have a well-defined continuous function $\Zp \ra \Cp$ given by the infinite sum
\[ \sum_{n = -\infty}^N a_n(\xtox - \one)^{\myst n} .\]
We denote by $\amice$ the set of all functions of this form, a $\Cp$-vector space. We endow it with the sup-norm.

\begin{prop} \label{uniqueness of coeffs for amice functions}
Each $\phi \in \amice$ uniquely determines the elements $a_n$ for which $\phi = \sum_{n = -\infty}^N a_n(\xtox - \one)^{\myst n}$.
\end{prop}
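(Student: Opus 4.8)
The plan is to reduce the statement to a uniqueness-of-vanishing assertion and then transport it into a question about the zeros of a $p$-adic analytic function. First, by subtracting two representations of the same function (padding with zero coefficients so that both run over a common range $n \leq N$), it suffices to show that if $\sum_{n \leq N} a_n (\xtox - \one)^{\myst n} = 0$ with $a_n \to 0$ as $n \to -\infty$, then $a_n = 0$ for every $n$. I would apply $\prodcorr$ to this relation. Since $\prodcorr$ sends $\phi$ to the power series whose $t^n/n!$-coefficient is the $n$-th Mahler coefficient of $\phi$, and the sup-norm of $\phi$ equals the supremum of its Mahler coefficients, $\prodcorr$ is an isometry and hence continuous, so it may be applied term-by-term to the (sup-norm convergent) defining series. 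Using $(\xtox - \one)^{\myst n} = (-1)^n \calS^n(\one)$, Proposition~\ref{(1 - t)^y}, and $\prodcorr(\one) = 1$, one computes $\prodcorr((\xtox - \one)^{\myst n}) = (-1)^n(1 - t)^n = (t - 1)^n$, so the relation becomes $\sum_{n \leq N} a_n (t - 1)^n = 0$, an identity of formal power series in $\Cp\pwr{t}$.

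The next step is to read this identity as the vanishing of an honest analytic function. For $t \in \Cp$ with $|t| < 1$ we have $|t - 1| = 1$, so $|a_n (t - 1)^n| = |a_n| \to 0$ as $n \to -\infty$; hence the series converges at every such $t$ and represents the zero function there. Writing $c_k = a_{N - k}$ (so $c_k \to 0$ as $k \to \infty$) and pulling out the nonzero factor $(t - 1)^N$, the vanishing is equivalent to $F\big((t - 1)^{-1}\big) = 0$ for all $|t| < 1$, where $F(z) = \sum_{k \geq 0} c_k z^k$. Because $c_k \to 0$, the series $F$ converges on the whole closed unit disc $|z| \leq 1$. As $t$ runs over $|t| < 1$, the point $z = (t - 1)^{-1}$ runs over the open ball $|z + 1| < 1$ (one checks $|z| = 1$ and $|z + 1| = |t/(t-1)| = |t|$), so $F$ vanishes on this entire ball.

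The crux — and the step I expect to be the main obstacle — is to conclude from this that $F$ is identically zero. Here I would invoke the fact that a nonzero power series convergent on the closed unit disc has only finitely many zeros there, via the Newton-polygon and Weierstrass-preparation theory in \cite[Chap.~6]{robert:p-adic}. Since the ball $|z + 1| < 1$ is infinite, $F$ must be the zero series, forcing $c_k = 0$ for all $k$, i.e. $a_n = 0$ for all $n$. The only points needing care are the justification that $\prodcorr$ passes through the infinite sum (handled by its being an isometry) and the verification that $t \mapsto (t - 1)^{-1}$ maps $|t| < 1$ exactly onto the infinite ball $|z + 1| < 1$; the substantive input is the finiteness of zeros of a convergent power series, which is precisely what rules out any nontrivial topological linear relation among the $(\xtox - \one)^{\myst n}$.
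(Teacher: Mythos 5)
Your proof is correct, but it takes a genuinely different route from the paper's. The paper's own proof is essentially a reduction to prior work: using the isometric automorphism property of $\calS^{N+1}$ (Corollary~\ref{isometric automorphism}), it rewrites $\sum_{n \leq N} a_n(\xtox - \one)^{\myst n}$ as $(-1)^N\calS^{N+1}\big(\sum_{n \leq 0}(-1)^n a_{n+N}\calS^n(\qf)\big)$ with $\qf = \calS^{-1}(\one)$, and then cites \cite[Prop.~6.18]{buckingham:factorial} for the vanishing of the coefficients, so the real content is outsourced to the earlier paper. You instead argue self-containedly: transport the relation through $\prodcorr$ (an isometry by Mahler's theorem), obtaining the formal identity $\sum_{n \leq N} a_n(t - 1)^n = 0$; evaluate on the open unit disc; substitute $z = (t - 1)^{-1}$ to produce a power series $F(z) = \sum_{k \geq 0} a_{N-k}z^k$ convergent on $|z| \leq 1$ and vanishing on the infinite ball $|z + 1| < 1$; and invoke finiteness of zeros (Strassmann/Weierstrass preparation, \cite[Chap.~6]{robert:p-adic}) to force $F = 0$. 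Your supporting computations check out: $\prodcorr((\xtox - \one)^{\myst n}) = (-1)^n(1 - t)^n = (t - 1)^n$ by Proposition~\ref{(1 - t)^y} applied to $\one$, one has $|t - 1| = 1$ whenever $|t| < 1$, and $t \mapsto (t - 1)^{-1}$ is indeed a bijection of $\{|t| < 1\}$ onto $\{|z + 1| < 1\}$. The one step you pass over quickly is the passage from the coefficient-wise (formal) identity to the pointwise identity $\sum_{n \leq N} a_n(t_0 - 1)^n = 0$ for a fixed $t_0$ with $|t_0| < 1$: this interchange of summations needs a word, and it holds because each $(t - 1)^n$ has integer coefficients, so the partial sums also converge in the Gauss norm and evaluation at $t_0$ is continuous for that norm (equivalently, apply the ultrametric double-series criterion, since $|a_n|\,|t_0|^k \to 0$); this is routine but should be said. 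On balance, your argument buys a proof that is independent of \cite{buckingham:factorial}, at the price of invoking the standard finiteness-of-zeros theorem, whereas the paper buys brevity by leaning on its predecessor; your closing remark is also a nice conceptual summary, namely that finiteness of zeros is exactly what forbids nontrivial topological linear relations among the functions $(\xtox - \one)^{\myst n}$.
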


\begin{proof}
It is enough to show that if $\sum_{n = -\infty}^N a_n(\xtox - \one)^{\myst n}$ is the zero function, then $a_n = 0$ for all $n$. For this, we note that for any $n \in \Z$,
\[ \calS^n(\one) = \calS^{n + 1}(\calS^{-1}(\one)) = \calS^{n + 1}(\qf) ,\]
where $\qf = \calS^{-1}(\one)$, so
\begin{align*}
\sum_{n = -\infty}^N a_n(\xtox - \one)^{\myst n} &= \sum_{n = -\infty}^N (-1)^n a_n\calS^n(\one) \\
&= \sum_{n = -\infty}^N (-1)^n a_n\calS^{n + 1}(\qf) \\
&= \calS^{N + 1}\left(\sum_{n = -\infty}^N (-1)^n a_n\calS^{n - N}(\qf)\right) \eqcom{by continuity} \\
&= \calS^{N + 1}\left(\sum_{n = -\infty}^0 (-1)^{n + N}a_{n + N}\calS^n(\qf)\right) \\
&= (-1)^N\calS^{N + 1}\left(\sum_{n = -\infty}^0 (-1)^n a_{n + N}\calS^n(\qf)\right) .
\end{align*}
The continuity step above uses Corollary~\ref{isometric automorphism}. Hence, if $\sum_{n = -\infty}^N a_n(\xtox - \one)^{\myst n}$ is the zero function, then so is
\[ (-1)^N\calS^{-(N + 1)}\left(\sum_{n = -\infty}^N a_n(\xtox - \one)^{\myst n}\right) = \sum_{n = -\infty}^0 (-1)^n a_{n + N}\calS^n(\qf) ,\]
and \cite[Prop.~6.18]{buckingham:factorial} tells us that all the $a_n$ are zero in this situation.
\end{proof}

In light of Proposition~\ref{uniqueness of coeffs for amice functions}, we may define a linear operator $\convderiv$ on $\amice$ by
\defmap{\convderiv}{\amice}{\amice}{\sum_{n = -\infty}^N a_n(\xtox - \one)^{\myst n}}{\sum_{n = -\infty}^N na_n(\xtox - \one)^{\myst(n - 1)}}[.]

\begin{prop} \label{integration by parts}
\phantom{}
\begin{enumerate}[label=(\roman*)]
\item $\calS^y \of \sigma = \sigma \of \calS^y + y\calS^{y - 1}$ for all $y \in \Zp$.\label{integration by parts: prep}
\item If $\phi \in \cfns{\Zp}{\Cp}$ and $x,y \in \Zp$, then
\begin{align*}
\inttmap^x(\sigma(\phi))(y) &= \inttmap^{x + 1}(\phi)(y) + y\inttmap^x(\phi)(y - 1) ,\\
\text{and}\quad \int_{\Zp} \big((\one - \xtox)^{\myst y} \myst (\sigma(\phi))\big)\dx[\dirac[x]] &= \int_{\Zp} \big((\one - \xtox)^{\myst y} \myst \phi\big)\dx[\dirac[x + 1]] \\*
\goinvisible &=\govisible {} + y\int_{\Zp} \big((\one - \xtox)^{\myst(y - 1)} \myst \phi\big)\dx[\dirac[x]] .
\end{align*}
\label{integration by parts: one term}
\item\textbf{(Integration by parts)} If $\psi \in \amice$, $\phi \in \cfns{\Zp}{\Cp}$, and $x \in \Zp$, then
\[ \int_{\Zp} \big(\psi \myst (\sigma(\phi))\big)\dx[\dirac[x]] = \int_{\Zp} (\psi \myst \phi)\dx[\dirac[x + 1]] - \int_{\Zp} \big(\convderiv(\psi) \myst \phi\big)\dx[\dirac[x]] .\]
\label{integration by parts: full}
\end{enumerate}
\end{prop}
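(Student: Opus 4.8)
The plan is to prove \ref{integration by parts: prep} first as the analytic core and then deduce \ref{integration by parts: one term} and \ref{integration by parts: full} from it, reading everything through the dictionary already established between $\calS^y$, $\inttmap^x$, and integration against the Dirac measures $\dirac[x]$. For \ref{integration by parts: prep} I would transport the identity to the power-series side via the isomorphism $\actcorr$, under which $\sigma$ becomes differentiation (equation~\ref{shiftu and diff}) and $\calS^y$ becomes multiplication by $(1 - t)^y$ (Proposition~\ref{(1 - t)^y}). Applying $\actcorr$ to $\calS^y(\sigma(\phi))$ gives $(1 - t)^y\actcorr(\phi)'$, while applying it to $\sigma(\calS^y(\phi)) + y\calS^{y - 1}(\phi)$ gives $\big((1 - t)^y\actcorr(\phi)\big)' + y(1 - t)^{y - 1}\actcorr(\phi)$. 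Expanding the derivative with the product rule and the formula $\big((1 - t)^y\big)' = -y(1 - t)^{y - 1}$, obtained from the derivative rule in \ref{props of a(t)} applied to $a(t) = 1 - t$, the two terms in $(1 - t)^{y - 1}\actcorr(\phi)$ cancel and both sides reduce to $(1 - t)^y\actcorr(\phi)'$. Injectivity of $\actcorr$ then yields \ref{integration by parts: prep}.

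For \ref{integration by parts: one term} I would use $\inttmap^x(\phi)(y) = \calS^y(\phi)(x)$ from \ref{inttmap-calS switch} to recast the claim purely in terms of $\calS$, then evaluate \ref{integration by parts: prep} at $x$: here $\sigma(\calS^y(\phi))(x) = \calS^y(\phi)(x + 1) = \inttmap^{x + 1}(\phi)(y)$ and $y\calS^{y - 1}(\phi)(x) = y\inttmap^x(\phi)(y - 1)$, which is the first displayed equation. The integral form is the same equation read through Proposition~\ref{inttrans as an integral transform}.

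For \ref{integration by parts: full} I would reduce by linearity and continuity to $\psi = (\xtox - \one)^{\myst n}$ with $n \in \Z$. For such $\psi$ the key facts are that $(\xtox - \one)^{\myst n} \myst \chi = (-1)^n\calS^n(\chi)$ (from the definition of $(\xtox - \one)^{\myst n}$ together with Proposition~\ref{calS acting via one - xtox}), that $\dirac[x]$ is evaluation at $x$, and that $\convderiv\big((\xtox - \one)^{\myst n}\big) = n(\xtox - \one)^{\myst(n - 1)}$. Substituting these and cancelling the factor $(-1)^n$, the identity collapses to exactly \ref{integration by parts: prep} with $y = n$, evaluated at $x$. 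To pass to a general $\psi = \sum_{n = -\infty}^N a_n(\xtox - \one)^{\myst n}$, I would take limits over the partial sums $\psi_M = \sum_{n = M}^N a_n(\xtox - \one)^{\myst n}$: since $\|(\xtox - \one)^{\myst m}\| = 1$ and $a_n \to 0$, we get $\psi_M \to \psi$ in sup-norm, and also $\convderiv(\psi_M) \to \convderiv(\psi)$ because $|n|_p \leq 1$ gives $\|n a_n(\xtox - \one)^{\myst(n - 1)}\| \leq |a_n| \to 0$. Proposition~\ref{myst and norm} and continuity of evaluation then show all three integrals are continuous in $\psi$, so the identity survives the limit.

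The step I expect to be the main obstacle is precisely this last limiting argument in \ref{integration by parts: full}: one must confirm that $\convderiv$ acts continuously on $\amice$ for the sup-norm, so that the integral involving $\convderiv(\psi)$ converges to the intended value. The remainder is sign bookkeeping and routine use of the dictionary already in place.
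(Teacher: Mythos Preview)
Your proposal is correct and matches the paper's proof in all essentials: part~\ref{integration by parts: prep} via $\actcorr$ and the product rule for $(1-t)^y\actcorr(\phi)$, part~\ref{integration by parts: one term} via the switch~(\ref{inttmap-calS switch}), and part~\ref{integration by parts: full} by reducing to the single terms $(\xtox - \one)^{\myst n}$ through linearity and a continuity argument. The only cosmetic difference is that the paper packages the limiting step by working in the sequence space $X_N = \{(a_n)_{n\le N}: a_n\to 0\}$ with its sup-norm and showing that the map $F_N:X_N\to\Cp$ sending $\alpha$ to the discrepancy of the three integrals is continuous and linear, hence zero once it vanishes on basis vectors; this sidesteps any need to discuss continuity of $\convderiv$ on $\amice$ itself, but your partial-sum argument achieves the same thing.
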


\begin{proof}
\ref{integration by parts: prep} As is the case for classical integration by parts, the product rule plays a central role, but this time at the level of exponential generating functions. If $\phi \in \cfns{\Zp}{\Cp}$,
\begin{align*}
\actcorr(\sigma(\calS^y(\phi))) &= \actcorr(\calS^y(\phi))' \eqcom{by (\ref{shiftu and diff})} \\
&= \ddx[t]\Big((1 - t)^y\actcorr(\phi)\Big) \eqcom{by Proposition~\ref{(1 - t)^y}} \\
&= (1 - t)^y\actcorr(\phi)' - y(1 - t)^{y - 1}\actcorr(\phi) \\
&= (1 - t)^y\actcorr(\sigma(\phi)) - y(1 - t)^{y - 1}\actcorr(\phi) \eqcom{by (\ref{shiftu and diff})} \\
&= \actcorr(\calS^y(\sigma(\phi))) - y\actcorr(\calS^{y - 1}(\phi)) \eqcom{by Proposition~\ref{(1 - t)^y}} \\
&= \actcorr\Big(\calS^y(\sigma(\phi)) - y\calS^{y - 1}(\phi)\Big) ,
\end{align*}
so $\sigma(\calS^y(\phi)) = \calS^y(\sigma(\phi)) - y\calS^{y - 1}(\phi)$ by the injectivity of $\actcorr$.

\ref{integration by parts: one term}
\begin{align*}
\inttmap^x(\sigma(\phi))(y) &= \calS^y(\sigma(\phi))(x) \eqcom{by (\ref{inttmap-calS switch})} \\
&= \sigma(\calS^y(\phi))(x) + y\calS^{y - 1}(\phi)(x) \eqcom{by part~\ref{integration by parts: prep}} \\
&= \calS^y(\phi)(x + 1) + y\calS^{y - 1}(\phi)(x) \\
&= \inttmap^{x + 1}(\phi)(y) + y\inttmap^x(\phi)(y - 1) \eqcom{by (\ref{inttmap-calS switch}) again.}
\end{align*}
This proves the first equation, and then the second is just Proposition~\ref{inttrans as an integral transform}.

\ref{integration by parts: full} Fix $N \in \Z$, and let $X_N$ be the $\Cp$-Banach space, with sup-norm $|\cdot|$, consisting of sequences $\alpha = (a_n)_{n \leq N}$ of elements $a_n \in \Cp$ such that $a_n \to 0$ as $n \to -\infty$. If $\alpha = (a_n)_{n \leq N} \in X_N$, let
\[ \psi(\alpha) = \sum_{n = -\infty}^N a_n(\xtox - \one)^{\myst n} \in \amice .\]
Because $\|(\xtox - \one)^{\myst n}\| = 1$,
\[ \|\psi(\alpha)\| \leq \sup_{n \leq N} |a_n| = |\alpha| ,\]
so the map $X_N \ra \amice$ given by $\alpha \mapsto \psi(\alpha)$ is continuous. Similarly, if we define
\defmap{\tilde{\convderiv}}{X_N}{\amice}{(a_n)_{n \leq N}}{\sum_{n = -\infty}^N na_n(\xtox - \one)^{\myst(n - 1)}}[,]
then the observation
\[ \|\tilde{\convderiv}(\alpha)\| \leq \sup_{n \leq N} |na_n(\xtox - \one)^{\myst(n - 1)}| \leq \sup_{n \leq N} |a_n| = |\alpha| \]
shows that $\tilde{\convderiv}$, too, is continuous. But $\tilde{\convderiv}(\alpha) = \convderiv(\psi(\alpha))$. Hence, Proposition~\ref{myst and norm} and continuity of evaluation maps $\cfns{\Zp}{\Cp} \ra \Cp$ combine with the foregoing to show that the map $F_N : X_N \ra \Cp$ defined by
\[ F_N : \alpha \mapsto \int_{\Zp} \big(\psi(\alpha) \myst (\sigma(\phi))\big)\dx[\dirac[x]] - \int_{\Zp} (\psi(\alpha) \myst \phi)\dx[\dirac[x + 1]] + \int_{\Zp} \big(\convderiv(\psi(\alpha)) \myst \phi\big)\dx[\dirac[x]] \]
is continuous. Of course, $F_N$ is also $\Cp$-linear, so to show that it is the zero map, it is enough to show $F_N(\psi(\alpha)) = 0$ in the case where $\alpha$ has only one non-zero entry, corresponding to the situation where $\psi(\alpha) = (\xtox - \one)^{\myst n}$ for some $n \in \Z$. But the vanishing of $F_N(\alpha)$ in this case is immediate from part~\ref{integration by parts: one term} and the observation that $(\xtox - \one)^{\myst n} = (-1)^n(\one - \xtox)^{\myst n}$. We are now done, because for any $\psi \in \amice$, there are $N \in \Z$ and $\alpha \in X_N$ such that $\psi(\alpha) = \psi$.
\end{proof}

\section{Complex incomplete gamma-functions} \label{sec: cmx inc gamma}

Although complex incomplete gamma-functions are standard, a brief summary of their construction and relevant key properties will nonetheless be worthwhile. Everything in Section~\ref{inc gamma: defs} is well known, but we state the results and give short proofs partly for the benefit of the reader and partly to establish certain conventions held to in our paper.

\subsection{Definitions and basic properties} \label{inc gamma: defs}

If $\theta \in \R$, we let $\log_\theta$ denote the inverse of the bijection
\bea
\{z \in \C \sat \theta - 2\pi < \ig(z) \leq \theta\} &\ra& \C\st \\*
z &\mapsto& \exp(z) .
\eea
Thus, $\log_\theta$ is defined on all of $\C\st$, and further, it is analytic on $\C \setm R_\theta$ where $R_\theta$ is the ray
\[ R_\theta = \{a\exp(i\theta) \sat a \in \R_{\geq 0}\} .\]
If $w \in \C\st$ and $s \in \C$, one may define $w^s = \exp(s\log_\theta(w))$, omitting from the notation the dependence on $\theta$, as accords custom. The equality $w^{s_1 + s_2} = w^{s_1}w^{s_2}$ holds always, and this shows in particular that, if $m \in \Z$, then $w^m$ is simply the $m$-fold product of $w$ with itself and is consequently independent of $\theta$.

Often, we will refer to half-planes in $\C$, for which we introduce the following notation. If $a \in \R$, define
\begin{align*}
\C_{< a} &= \{s \in \C \sat \rl(s) < a\} ,\\
\C_{> a} &= \{s \in \C \sat \rl(s) > a\} .
\end{align*}

We make some definitions that will help us manage branches of the incomplete gamma-functions. Let $\calK$ be the set of maps $\kappa : \R_{\geq 0} \ra \C$ with the following properties:
\begin{enumerate}[label=(\roman*)]
\item For each $\nu \in \R_{\geq 0}$, the restriction $\kappa^{(\nu)} = \kappa|_{[0,\nu]} : [0,\nu] \ra \C$ is a contour.
\item $|\kappa(t)| \to \infty$ as $t \to \infty$.
\item There exist $\alpha_1,\alpha_2 \in (-\tfrac{\pi}{2},\tfrac{\pi}{2})$ and $t_0 \in \R_{\geq 0}$ such that for all $t \geq t_0$, $\kappa(t)$ has positive real part and lies between the rays $R_{\alpha_1}$ and $R_{\alpha_2}$.
\end{enumerate}

If $\theta \in \R$, we define a subset $\calK_\theta$ of $\calK$ by declaring that $\kappa \in \calK_\theta$ if its image in $\C$ does not intersect $R_\theta$ and, in the case where $\cos(\theta) > 0$, $\kappa$ satisfies the further condition that there exists $t_1 \in \R_{\geq 0}$ such that $\kappa(t)$ is on or below $R_\theta$ for all $t \geq t_1$. Then, if $r \in \C\st$, we let
\[ \calK_{\theta,r} = \{\kappa \in \calK_\theta \sat \kappa(0) = r\} .\]
We provide a sketch to illustrate the setup:
\begin{center}
\includegraphics{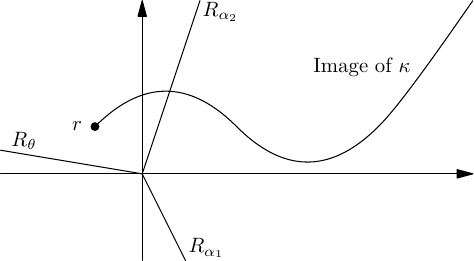}
\end{center}

In the following, we do not exclude the possibility that $\cos(\theta) > 0$, but we point out that for the purposes of incomplete gamma-functions, such a choice is not natural, because it results in a branch cut dividing the half-plane $\C_{> 0}$, the domain for the integration variable in the integrals defining the functions.

\begin{theorem}[Upper incomplete gamma-functions] \label{cmx gamma theorem}
Fix $\theta \in \R$. Let $r \in \C\st$, and choose any $\kappa \in \calK_{\theta,r}$. For each $N \in \Z_{\geq 0}$, define
\defmap{h_N}{\C}{\C}{s}{\int_{\kappa^{(N)}} u^{s - 1}\exp(-u)\dx[u]}[,]
with complex powers being defined via $\log_\theta$.
\begin{enumerate}[label=(\roman*)]
\item The functions $h_N$ converge pointwise as $N \to \infty$ to a function $\C \ra \C$, which we denote $\Gamma^{(\theta,\kappa)}(\cdot,r)$, and this function is analytic.\label{cmx gamma theorem: existence}
\item The function $\Gamma^{(\theta,\kappa)}(\cdot,r)$ in fact does not depend on the choice of $\kappa \in \calK_{\theta,r}$, and we denote it $\Gamma^\theta(\cdot,r)$.\label{cmx gamma theorem: indep of nu}
\end{enumerate}
\end{theorem}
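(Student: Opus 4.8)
The plan is to sidestep the parametrization of $\kappa$ entirely by passing to a primitive of the integrand, which reduces both parts to a single statement about the behaviour of that primitive at infinity inside the sector supplied by condition (iii). Throughout, fix $s$ and write $f_s(u) = u^{s-1}\exp(-u)$, where $u^{s-1} = \exp((s-1)\log_\theta(u))$. Since $\log_\theta$ is analytic on $\C \setm R_\theta$ and $\exp(-u)$ is entire, $f_s$ is analytic on the simply connected domain $D = \C \setm R_\theta$, and each $h_N$ is analytic in $s$, being a finite contour integral of an integrand analytic in $s$. Simple connectivity of $D$ furnishes a primitive $F_s$ of $f_s$ on $D$; taking $F_s(w) = \int_r^w f_s$ along any path in $D$ and noting that the image of any $\kappa \in \calK_\theta$ avoids $R_\theta$ and so lies in $D$, we get $h_N(s) = \int_{\kappa^{(N)}} f_s = F_s(\kappa(N)) - F_s(r)$.

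For part \ref{cmx gamma theorem: existence}, the crux is to show that $F_s(w)$ has a limit as $w \to \infty$ inside the sector. I would first fix a direction $\alpha \in (\alpha_1,\alpha_2)$ and consider the ray $\ell_\alpha(\tau) = \kappa(t_0) + \tau\exp(i\alpha)$, which remains in the sector because the sector is a convex cone closed under addition. Along $\ell_\alpha$ one has $\rl(u) = \rl(\kappa(t_0)) + \tau\cos\alpha$ with $\cos\alpha > 0$, while $|u| \sim \tau$ and $\arg_\theta(u)$ stays bounded, so $|f_s(u)| \lesssim \tau^{\rl(s)-1}\exp(-\tau\cos\alpha)$ and $\int_{\ell_\alpha} f_s$ converges absolutely by a gamma-type estimate that is locally uniform in $s$; this yields a value $c_s := F_s(\kappa(t_0)) + \int_{\ell_\alpha} f_s = \lim_{\tau\to\infty} F_s(\ell_\alpha(\tau))$. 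To pass from the ray to a general point $w$ of the sector, I would connect $\ell_\alpha$ to $w$ by the vertical segment on which $\rl(u) = \rl(w)$ is constant; since sector points satisfy $|\ig(u)| \le \rl(u)\tan\beta$ with $\beta = \max(|\alpha_1|,|\alpha_2|) < \tfrac{\pi}{2}$, this segment has length $O(\rl(w))$ and $|f_s|$ is $O(\rl(w)^{\rl(s)}\exp(-\rl(w)))$ on it, so $F_s(w) \to c_s$ as $\rl(w) \to \infty$. Because $|\kappa(N)| \to \infty$ in the sector forces $\rl(\kappa(N)) \to \infty$, we conclude $h_N(s) \to c_s - F_s(r)$; as these estimates are locally uniform in $s$, the convergence is locally uniform and the limit $\Gamma^{(\theta,\kappa)}(\cdot,r)$ is analytic by the Weierstrass theorem.

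For part \ref{cmx gamma theorem: indep of nu}, the same primitive does the work. Given $\kappa_1,\kappa_2 \in \calK_{\theta,r}$, both begin at $r$ and lie in $D$, so $h_N^{(1)}(s) - h_N^{(2)}(s) = F_s(\kappa_1(N)) - F_s(\kappa_2(N))$. The point is that $c_s$ depends only on $f_s$ and on the approach $\rl(w) \to \infty$ within the right half-plane (minus $R_\theta$), not on the particular sector: whenever $\rl(w),\rl(w') \to \infty$, one joins $w$ to $w'$ by a path in $\{\rl(u) > 0\} \setm R_\theta$ along which $\exp(-\rl(u))$ is uniformly small, giving $F_s(w) - F_s(w') \to 0$. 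Hence $F_s(\kappa_1(N))$ and $F_s(\kappa_2(N))$ share the common limit $c_s$, their difference tends to $0$, and $\Gamma^{(\theta,\kappa_1)}(\cdot,r) = \Gamma^{(\theta,\kappa_2)}(\cdot,r)$, so we may write $\Gamma^\theta(\cdot,r)$.

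The main obstacle is the bookkeeping around the branch cut $R_\theta$ that underlies the well-definedness of $c_s$ and the admissibility of the connecting paths, especially when $\cos\theta > 0$, where $R_\theta$ meets the right half-plane. In that case $D$ is still simply connected, but the sector carrying the tail of $\kappa$ may straddle the cut; this is exactly where the extra requirement built into $\calK_\theta$ — that $\kappa(t)$ eventually lie on or below $R_\theta$ — is needed, ensuring that all admissible contours approach infinity on the same side of the cut and hence share the value $c_s$. Checking that the vertical and connecting segments of the two previous paragraphs can always be taken inside $D$ on that one side, and that the decay estimates are genuinely uniform on compact sets of $s$ (for both signs of $\rl(s)-1$), is the part demanding the most care; the sector-decay estimate itself is routine once $F_s$ has reduced everything to the boundary values $F_s(\kappa(N))$.
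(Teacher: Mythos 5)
Your strategy is sound and genuinely different from the paper's proof. The paper uses no primitive: for independence it joins truncations of $\kappa_1$ and $\kappa_2$ by an origin-centred arc of radius $\rho$ inside a common sector (on the correct side of $R_\theta$), bounding the arc integral by $(\alpha_2-\alpha_1)C_1\rho^{\rl(s)}\exp(-C_2\rho)\to 0$ as $\rho\to\infty$; for existence and analyticity it treats only one convenient contour that eventually follows a ray, proving the $h_N$ are uniformly Cauchy on each compact set $S$ via $\|h_N|_S-h_M|_S\|_S\le\int_M^N\exp(B_S-C_S t)\,dt$, and then transfers to arbitrary $\kappa$ using the independence statement. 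Your route channels both parts through the boundary behaviour of the primitive $F_s$ on the cut plane: it treats all admissible contours at once and proves something slightly stronger, namely $F_s(w)\to c_s$ as $\rl(w)\to\infty$ with $w$ in a fixed sector on the correct side of the cut, from which (i) and (ii) both follow; the price is the geometric bookkeeping with the ray, the connecting segments, and the cut. The underlying estimate is identical in the two proofs ($|u^{s-1}\exp(-u)|\le C_S|u|^{\rl(s)-1}\exp(-c\,\rl(u))$ on sectors strictly inside $\C_{>0}$, locally uniformly in $s$ because $\arg_\theta$ is bounded), as is the final appeal to uniform-on-compacta convergence. Your deferred branch-cut checks do go through: when $\cos\theta>0$, the set of points of the open right half-plane strictly below $R_\theta$ is an intersection of two open half-planes, hence a convex cone; intersecting it with the sector gives a convex cone containing the tail of $\kappa$, and all your rays and vertical segments can be chosen inside it.

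However, one assertion in your part (ii) is false as stated, not merely in need of care. You claim $F_s(w)-F_s(w')\to 0$ \emph{whenever} $\rl(w),\rl(w')\to\infty$ in $\{\rl(u)>0\}\setm R_\theta$ (same side of the cut), justified by joining $w$ to $w'$ by a path on which $\exp(-\rl(u))$ is uniformly small. Smallness of $\exp(-\rl(u))$ alone does not bound the integral, since the path length and the factor $|u|^{\rl(s)-1}$ are unbounded; and no cleverer path can help, because $F_s(w)-F_s(w')$ is path-independent and the statement itself fails without a sector restriction. Indeed, take $\rl(s)>2$ and $w_n=n-ie^{n}$, so that $\rl(w_n)=n\to\infty$ while $|w_n|\ge e^{n}$: then $c_s-F_s(w_n)$ is the tail integral $\int_{w_n}^{\infty}u^{s-1}\exp(-u)\,du$, and one integration by parts shows its modulus is $(1+o(1))\,|w_n^{\,s-1}\exp(-w_n)|\ge C\,e^{n(\rl(s)-2)}\to\infty$. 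So $c_s$ is only a \emph{sectorial} limit at infinity\dsh which is exactly why condition (iii) in the definition of $\calK$ is imposed. The repair stays entirely inside your framework: the tails of $\kappa_1$ and $\kappa_2$ lie in sectors on the same side of $R_\theta$, so run your part (i) argument once in the single convex sector spanned by the two (intersected with the below-$R_\theta$ region when $\cos\theta>0$); both $F_s(\kappa_1(N))$ and $F_s(\kappa_2(N))$ then converge to that common sector's $c_s$, and (ii) follows.
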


\begin{proof}
Let $\kappa_1,\kappa_2 \in \calK_{\theta,r}$, and choose $\alpha_1 < \alpha_2$ in $(-\tfrac{\pi}{2},\tfrac{\pi}{2})$ such that the complex numbers $\kappa_1(t),\kappa_2(t)$ are, for large enough $t$, eventually always between the rays $R_{\alpha_1}$ and $R_{\alpha_2}$ in $\C_{> 0}$. If $\nu_1 \in \R_{\geq 0}$ is large enough, there is an origin-centred arc $A$ in $\C_{> 0}$, of some radius $\rho > 0$, that begins at $\kappa_1(\nu_1)$ and ends at some point $\kappa_2(\nu_2)$ without leaving the sector bounded by $R_{\alpha_1}$ and $R_{\alpha_2}$, and without crossing $R_\theta$ either. This is possible because of how $\calK_{\theta,r}$ is defined. By path independence, the difference
\[ \int_{\kappa_2^{(\nu_2)}} u^{s - 1}\exp(-u)\dx[u] - \int_{\kappa_2^{(\nu_2)}} u^{s - 1}\exp(-u)\dx[u] \]
equals an integral of the same integrand over the arc $A$. But there is $C_1 > 0$ such that
\[ \left|\int_A u^{s - 1}\exp(-u)\dx[u]\right| \leq C_1\rho^{\rl(s)}\int_{\alpha_1}^{\alpha_2} \exp(-\rho\cos(t))\dx[t] ,\]
and if we choose $C_2 > 0$ with $C_2 < \min(\cos(\alpha_1),\cos(\alpha_2))$, then this last integral is at most
\[ C_1\rho^{\rl(s)}\int_{\alpha_1}^{\alpha_2} \exp(-\rho C_2)\dx[t] = (\alpha_2 - \alpha_2)C_1\rho^{\rl(s)}\exp(-\rho C_2) .\]
For any $s \in \C$, $\rho^{\rl(s)}\exp(-\rho C_2) \to 0$ as $\rho \to 0$, so whether or not the integral $\int_\kappa u^{s - 1}\exp(-u)\dx[u]$ converges is independent of $\kappa$, and its value in the case of convergence is also independent of $\kappa$.

Let $\kappa \in \calK_{\theta,r}$ now be any convenient choice\dsh for simplicity, one that eventually coincides with some ray in $\C_{> 0}$. The integrand in the integral defining $h_N$ is continuous in $t$ for each $s$, and is analytic in $s$ for each $t$, so $h_N$ is analytic on $\C$; see \cite[Theorem~17.9]{bak-newman:complex-analysis}, for example. Now, fix a non-empty compact subset $S$ of $\C$. The fact that $t$ grows faster than $\log(t)$ as $t \in \R$ tends to $\infty$ shows that there are real numbers $B_S \in \R$, $C_S > 0$, and $N_S \geq 0$, depending only on $r$, $\theta$, $\kappa$, and $S$, such that for all integers $M,N$ with $N \geq M \geq N_S$,
\beq \label{h_N unif conv on compacts: key bound}
\Big\|h_N|_S - h_M|_S\Big\|_S \leq \int_M^N \exp(B_S - C_S t)\dx[t] ,
\eeq
where $\|\cdot\|_S$ is the sup-norm on continuous functions $S \ra \C$. The right-hand side of (\ref{h_N unif conv on compacts: key bound}) converges to zero as $M,N \to \infty$ with $N \geq M$, so the sequence of functions $(h_N|_S)_{N \geq 0}$ is Cauchy with respect to the sup-norm. We are now done by the fact that uniform convergence on compact subsets implies analyticity \cite[Theorem~7.6]{bak-newman:complex-analysis}.
\end{proof}

\begin{theorem}[Lower incomplete gamma-functions] \label{cmx lower gamma theorem}
Fix $\theta \in \R$. Let $r \in \C\st$, and for each $N \in \Z_{\geq 0}$, define
\defmap{h_N}{\C_{> 0}}{\C}{s}{\int_{1/N}^1 (tr)^{s - 1}\exp(-tr)r\dx[t]}[,]
with complex powers being defined via $\log_\theta$. Then the functions $h_N$ converge pointwise as $N \to \infty$ to a function $\C_{> 0} \ra \C$, which we denote $\gamma^\theta(\cdot,r)$, and this function is analytic.
\end{theorem}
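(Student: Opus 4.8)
The plan is to mirror the strategy used for Theorem~\ref{cmx gamma theorem}: first establish that each $h_N$ is analytic, then show that the sequence $(h_N)$ is uniformly Cauchy on every compact subset of $\C_{> 0}$, and finally invoke the fact that a locally uniform limit of analytic functions is analytic. The one genuinely new feature is that the integration is now over a fixed compact interval $[1/N,1]$ and the only delicate behaviour occurs near the lower endpoint $t = 0$, where integrability hinges on the condition $\rl(s) > 0$ defining the domain.

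First I would record that, for each fixed $N$, the integrand $(t,s) \mapsto (tr)^{s-1}\exp(-tr)r$ is continuous in $t$ on $[1/N,1]$ and analytic in $s$ on $\C_{> 0}$, so that the same cited result \cite[Theorem~17.9]{bak-newman:complex-analysis} used earlier shows $h_N$ is analytic on $\C_{> 0}$. Here I would note that, because $t$ is a positive real, the point $tr$ has the same $\log_\theta$-argument $\vartheta_r = \arg_\theta(r)$ for every $t \in (0,1]$, so that $\log_\theta(tr) = \log(t|r|) + i\vartheta_r$ and the complex power $(tr)^{s-1}$ depends on $t$ only through the elementary factor $(t|r|)^{s-1}$. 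This makes the modulus computation transparent and, in particular, removes any branch ambiguity along the segment $\{tr : t \in (0,1]\}$.

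The heart of the argument is a tail estimate. For integers $N \geq M \geq 1$ I would write $h_N(s) - h_M(s) = \int_{1/N}^{1/M}(tr)^{s-1}\exp(-tr)r\dx[t]$ and bound its modulus using
\[ |(tr)^{s-1}\exp(-tr)r| = t^{\rl(s)-1}\,|r|^{\rl(s)}\exp(-\ig(s)\vartheta_r)\exp(-t\rl(r)) ,\]
which follows from $(tr)^{s-1} = \exp\!\big((s-1)(\log(t|r|) + i\vartheta_r)\big)$. Fixing a non-empty compact $S \subset \C_{> 0}$, I would choose $\sigma_0 > 0$ with $\rl(s) \geq \sigma_0$ for all $s \in S$; on $S$ the factors $|r|^{\rl(s)}$, $\exp(-\ig(s)\vartheta_r)$, and $\exp(-t\rl(r))$ (for $t \in [0,1]$) are bounded, while $t^{\rl(s)-1} \leq t^{\sigma_0-1}$ for $t \in (0,1]$ since $\rl(s) - 1 \geq \sigma_0 - 1$. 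This yields a constant $C_S > 0$ with
\[ \big\|h_N|_S - h_M|_S\big\|_S \leq C_S\int_0^{1/M} t^{\sigma_0-1}\dx[t] = \frac{C_S}{\sigma_0}M^{-\sigma_0} ,\]
which tends to $0$ as $M \to \infty$. Hence $(h_N|_S)$ is Cauchy in the sup-norm $\|\cdot\|_S$, the $h_N$ converge uniformly on compact subsets of $\C_{> 0}$, and so pointwise convergence follows together with the analyticity of the limit, the latter from \cite[Theorem~7.6]{bak-newman:complex-analysis}, exactly as in the proof of Theorem~\ref{cmx gamma theorem}.

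The main obstacle, such as it is, lies less in the analysis than in the bookkeeping of the complex power: I would need to confirm that $\vartheta_r$ really is constant along the segment $\{tr : t \in (0,1]\}$ and that the modulus identity is correctly derived. Once that is in hand, the integrability near $t = 0$ is immediate from $\rl(s) > 0$, and everything else is a routine dominated-tail argument, needing no path-independence or arc estimate of the kind required for the upper incomplete gamma-functions.
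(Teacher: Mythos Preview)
Your proposal is correct and follows essentially the same approach as the paper: the paper likewise sketches the proof as a variant of Theorem~\ref{cmx gamma theorem}, noting that the radial segment $\{tr : t \in (0,1]\}$ keeps the branch of $\log_\theta$ under control, choosing $\eps > 0$ with $\rl(s) \geq \eps$ on a compact $S$, and bounding $|(tr)^{s-1}\exp(-tr)r| \leq Ct^{\eps-1}$ to obtain the Cauchy estimate $\tfrac{1}{\eps}(M^{-\eps} - N^{-\eps})$. Your integration from $0$ to $1/M$ in place of $1/N$ to $1/M$ is a harmless coarsening.
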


\begin{proof}
The proof runs along similar lines to that of Theorem~\ref{cmx gamma theorem}, with the following differences: (i) For the analyticity of $h_N$, this time the crucial point is that the set $\{tr \sat t \in (0,1]\}$ is either disjoint from $R_\theta$ or contained entirely in it, both sets being radial. (ii) If $S$ is a non-empty compact subset of $\C_{> 0}$, there is $\eps > 0$ such that $\rl(s) \geq \eps$ for all $s \in S$, so if $t \in (0,1]$, $t^{\rl(s) - 1} \leq t^{\eps - 1}$. (iii) When bounding the integrand, the factor $|\mathopen{}\exp(-tr)|$ is bounded by a positive constant (\ie independent of $t$), and $|r^{s - 1}|$ is bounded by a constant independent of $s$ because $S$ is compact, so there is $C > 0$ such that
\[ |(tr)^{s - 1}\exp(-tr)r| \leq Ct^{\eps - 1} \]
for all $s \in S$ and all $t \in (0,1]$. Therefore, if $1 \leq M \leq N$,
\[ \sup_{s \in S} \left|\int_{1/N}^{1/M} (tr)^{s - 1}\exp(-tr)r\dx[t]\right| \leq C\int_{1/N}^{1/M} t^{\eps - 1}\dx[t] = \frac{1}{\eps}\left(\tfrac{1}{M^\eps} - \tfrac{1}{N^\eps}\right) ,\]
and this converges to zero as $M,N \to \infty$ with $N \geq M$.
\end{proof}

Define
\defmap{\gammahat}{\C_{> 0}}{\C}{s}{\Gamma^\theta(s,r) + \gamma^\theta(s,r)}[.]
It is irrelevant whether this function is equal to the usual $\Gamma$-function. All that matters for our purposes is that it has the same value at $1$ and the same functional equation, as demonstrated in Proposition~\ref{rec rels for both gammas}.

\begin{prop} \label{rec rels for both gammas}
Fix $\theta \in \R$, and let $r \in \C\st$. One has the following special values and functional equations:
\begin{align*}
\Gamma^\theta(1,r) &= \exp(-r) ,\\
\Gamma^\theta(s + 1,r) &= r^s\exp(-r) + s\Gamma^\theta(s,r) \eqcom{for all $s \in \C$,} \\
\gamma^\theta(1,r) &= 1 - \exp(-r) ,\\
\gamma^\theta(s + 1,r) &= -r^s\exp(-r) + s\gamma^\theta(s,r) \eqcom{for all $s \in \C_{> 0}$,} \\
\gammahat(1) &= 1 ,\\
\gammahat(s + 1) &= s\gammahat(s) \eqcom{for all $s \in \C_{> 0}$.}
\end{align*}
\end{prop}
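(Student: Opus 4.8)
The plan is to derive all six identities from a single mechanism\dsh integration by parts applied to the finite-integral approximants $h_N$\dsh together with explicit antiderivatives for the special values at $s = 1$; the two statements about $\gammahat$ then follow formally by adding the corresponding statements for $\Gamma^\theta$ and $\gamma^\theta$. The key observation I would use repeatedly is that, wherever $u^s$ is defined via $\log_\theta$, the function $u \mapsto u^s\exp(-u)$ has derivative $s u^{s - 1}\exp(-u) - u^s\exp(-u)$, so the fundamental theorem of calculus converts its boundary values into the combination $s\,h_N(s) - h_N(s + 1)$.

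For the upper incomplete gamma-function, I would first invoke Theorem~\ref{cmx gamma theorem}\ref{cmx gamma theorem: indep of nu} to choose a convenient $\kappa \in \calK_{\theta,r}$ (its image avoids $R_\theta$, so $u^s$ is holomorphic along $\kappa^{(N)}$). Taking $s = 1$ and using the entire antiderivative $-\exp(-u)$ gives $h_N(1) = \exp(-r) - \exp(-\kappa(N))$; since $\rl(\kappa(N)) \to \infty$, the second term vanishes in the limit, so $\Gamma^\theta(1,r) = \exp(-r)$. For the functional equation, the fundamental theorem of calculus applied to $u^s\exp(-u)$ along $\kappa^{(N)}$ yields
\[ h_N(s + 1) = r^s\exp(-r) - \kappa(N)^s\exp(-\kappa(N)) + s\,h_N(s) . \]
As $N \to \infty$, the term $\kappa(N)^s\exp(-\kappa(N))$ tends to $0$: the argument of $\kappa(N)$ stays in a bounded sector, so $|\kappa(N)^s| = |\kappa(N)|^{\rl(s)}$ grows only polynomially against the exponential decay of $|\exp(-\kappa(N))|$. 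Passing to the limit gives $\Gamma^\theta(s + 1,r) = r^s\exp(-r) + s\Gamma^\theta(s,r)$, valid for all $s \in \C$.

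For the lower incomplete gamma-function, I would make the substitution $u = tr$ in the approximant of Theorem~\ref{cmx lower gamma theorem}, writing $h_N(s) = \int_{r/N}^r u^{s - 1}\exp(-u)\dx[u]$ over the segment from $r/N$ to $r$; since $\log_\theta(tr) = \log t + \log_\theta r$ along the ray, the complex power reduces to $u^{s - 1} = t^{s - 1}r^{s - 1}$ and the computation becomes a routine real-variable one, sidestepping the question of whether the ray meets $R_\theta$. The case $s = 1$ gives $h_N(1) = \exp(-r/N) - \exp(-r) \to 1 - \exp(-r)$, and the same integration by parts gives
\[ h_N(s + 1) = (r/N)^s\exp(-r/N) - r^s\exp(-r) + s\,h_N(s) . \]
The boundary term at the lower endpoint is $(r/N)^s\exp(-r/N)$, and its vanishing as $N \to \infty$ requires $\rl(s) > 0$\dsh this is exactly why the functional equation for $\gamma^\theta$ is asserted only on $\C_{> 0}$. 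The limit yields $\gamma^\theta(s + 1,r) = -r^s\exp(-r) + s\gamma^\theta(s,r)$.

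Finally, for $\gammahat = \Gamma^\theta(\cdot,r) + \gamma^\theta(\cdot,r)$, adding the two special values gives $\gammahat(1) = \exp(-r) + (1 - \exp(-r)) = 1$, and adding the two functional equations makes the terms $\pm r^s\exp(-r)$ cancel, leaving $\gammahat(s + 1) = s\gammahat(s)$ on $\C_{> 0}$. The only step requiring genuine care is the vanishing of the two boundary terms: at the upper endpoint for $\Gamma^\theta$ one must bound $u^s$ against exponential decay along a contour of bounded argument, and at the lower endpoint for $\gamma^\theta$ the hypothesis $\rl(s) > 0$ is indispensable. Everything else is a direct application of the fundamental theorem of calculus to the explicit antiderivatives.
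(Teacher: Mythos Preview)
Your proposal is correct and follows essentially the same approach as the paper's own proof: integration by parts for the functional equations of $\Gamma^\theta$ and $\gamma^\theta$, an explicit primitive of $\exp(-u)$ for the values at $s = 1$, and addition for the statements about $\gammahat$. The paper's proof is very terse on these points, and you have supplied the details (boundary-term vanishing, the role of $\rl(s) > 0$) in a sound way. One tiny imprecision: the equality $|\kappa(N)^s| = |\kappa(N)|^{\rl(s)}$ should be an inequality up to a bounded constant, since $\ig(s)\cdot\ig(\log_\theta(\kappa(N)))$ contributes a bounded factor; your conclusion is unaffected.
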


\begin{proof}
The functional equations for $\Gamma^\theta(\cdot,r)$ and $\gamma^\theta(\cdot,r)$ are obtained via integration by parts, and the values at $1$ may be obtained via a primitive of $u \mapsto \exp(-u)$, or, in the case of $\Gamma^\theta(1,r)$, by taking $s = 0$ in the functional equation. The functional equation for $\gammahat$ is obtained by adding the functional equations of $\Gamma^\theta(s,r)$ and $\gamma^\theta(s,r)$, and $\gammahat(1)$ is similarly found.
\end{proof}

\brem \label{rem: independent of theta}
If $m \in \Z$, then $\Gamma^\theta(m,r)$ is independent of $\theta$, as is immediately apparent in light of the fact that $\exp(m\log_\theta(z))$ is independent of $\theta$ in that case. The same holds for $\gamma^\theta(m,r)$ when $m \in \Z_{\geq 1}$.
\erem

\subsection{A convenient form of the incomplete gamma-functions}

The following will assist us in relating $\Gamma^\theta(\cdot,r)$ and $\gamma^\theta(\cdot,r)$ to the integral transform we will consider later. If $\theta \in \R$, then for any $r \in \C\st$, define
\bea
\gfn^\theta(\cdot,r) : \C &\ra& \C \\*
s &\mapsto& \exp(r)\Gamma^\theta(s + 1,r) ,\\[2ex]
\lgfn^\theta(\cdot,r) : \C_{> -1} &\ra& \C \\*
s &\mapsto& \exp(r)\gamma^\theta(s + 1,r) .
\eea
Straight from the definitions, we have
\beq \label{Gamma as sum}
\gfn^\theta(s,r) + \lgfn^\theta(s,r) = \exp(r)\gammahat(s + 1) \eqcom{for all $s \in \C_{> -1}$.}
\eeq

\begin{prop} \label{standardized form of gfn}
Let $\theta \in \R$.
\begin{enumerate}[label=(\roman*)]
\item Suppose that $r \in \C_{> 0}$, and if $\cos(\theta) > 0$, assume that $r$ is on or below the ray $R_\theta$. Then for all $s \in \C$,
\[ \gfn^\theta(s,r) = r^{s + 1}\int_{-\infty}^0 (1 - x)^s\exp(rx)\dx ,\]
the integral being taken over the real half-line $(-\infty,0]$.\label{standardized form: gfn}
\item If $r \in \C_{< 0}$, then for all $s \in \C_{> -1}$,
\[ \lgfn^\theta(s,r) = r^{s + 1}\int_0^1 (1 - x)^s\exp(rx)\dx .\]
\label{standardized form: lgfn}
\end{enumerate}
\end{prop}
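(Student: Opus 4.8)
The plan is to prove both parts by a single change of variables, $u = r(1 - x)$ (equivalently $x = 1 - u/r$), which converts the radial contour and segment integrals defining the incomplete gamma-functions into the real integrals on the right-hand sides. The one technical ingredient that keeps the branch bookkeeping painless is a positive-scaling identity for $\log_\theta$: for any $w \in \C\st$ and any real $\lambda > 0$, scaling by $\lambda$ does not alter the argument of $w$, so the value selected by $\log_\theta$ satisfies $\log_\theta(\lambda w) = \log\lambda + \log_\theta(w)$, whence $(\lambda w)^s = \lambda^s w^s$ with $\lambda^s = \exp(s\log\lambda)$ defined by the real logarithm. I would record this identity first, since it holds irrespective of where the branch cut $R_\theta$ sits.

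For part~\ref{standardized form: gfn}, I would first unwind $\gfn^\theta(s,r) = \exp(r)\Gamma^\theta(s + 1,r)$ and invoke Theorem~\ref{cmx gamma theorem}\ref{cmx gamma theorem: indep of nu}, which lets me compute $\Gamma^\theta(s + 1,r) = \int_\kappa u^s\exp(-u)\dx[u]$ using any convenient $\kappa \in \calK_{\theta,r}$. The natural choice is the radial ray $\kappa(t) = (1 + t)r$ for $t \geq 0$, i.e. $u = r(1 - x)$ with $x$ running from $0$ down to $-\infty$. Since $r \in \C_{> 0}$, this ray lies in the open right half-plane; when $\cos(\theta) \leq 0$ the cut $R_\theta$ misses that half-plane so admissibility is automatic, while when $\cos(\theta) > 0$ the hypothesis that $r$ is on or below $R_\theta$ is precisely what keeps the ray from crossing $R_\theta$, giving $\kappa \in \calK_{\theta,r}$. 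With $1 - x \geq 1 > 0$, the scaling identity gives $u^s = r^s(1 - x)^s$ (real power in the second factor), while $\exp(-u) = \exp(-r)\exp(rx)$ and $\dx[u] = -r\dx$; substituting and reversing orientation turns $\int_\kappa u^s\exp(-u)\dx[u]$ into $r^{s + 1}\exp(-r)\int_{-\infty}^0 (1 - x)^s\exp(rx)\dx$, the integral converging because $\exp(rx)$ decays like $\exp(\rl(r)x)$ with $\rl(r) > 0$ as $x \to -\infty$. Multiplying by $\exp(r)$ yields the claim.

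Part~\ref{standardized form: lgfn} is more direct, since Theorem~\ref{cmx lower gamma theorem} already presents $\gamma^\theta(s + 1,r)$ as the genuine improper integral $\int_0^1 (tr)^s\exp(-tr)r\dx[t]$, convergent at $t = 0$ precisely because $\rl(s) > -1$, i.e. $s \in \C_{> -1}$, so no contour-independence is needed here. I would substitute $t = 1 - x$, use $(tr)^s = t^s r^s = (1 - x)^s r^s$ (again via the scaling identity, valid for $1 - x > 0$) and $\exp(-tr) = \exp(-r)\exp(rx)$, and read off $\gamma^\theta(s + 1,r) = r^{s + 1}\exp(-r)\int_0^1 (1 - x)^s\exp(rx)\dx$; multiplying by $\exp(r)$ gives $\lgfn^\theta(s,r)$.

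The main obstacle is located entirely in part~\ref{standardized form: gfn}: confirming that the straight radial ray is a legitimate element of $\calK_{\theta,r}$ under the stated hypotheses, and in particular checking that the case $\cos(\theta) > 0$ together with ``$r$ on or below $R_\theta$'' really does keep the ray clear of the branch cut, with the degenerate case $r \in R_\theta$ requiring either a slightly rotated admissible contour together with Theorem~\ref{cmx gamma theorem}\ref{cmx gamma theorem: indep of nu} or a short continuity argument. Everything else reduces to the scaling identity for $\log_\theta$ and the orientation and convergence bookkeeping of a single change of variables.
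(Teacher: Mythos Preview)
Your proposal is correct and follows essentially the same approach as the paper: both proofs use the radial contour $\kappa(t) = r(1+t)$ in part~\ref{standardized form: gfn} and the substitution $t = 1 - x$ in part~\ref{standardized form: lgfn}, with the same branch manipulation $(r(1-x))^s = r^s(1-x)^s$. You are in fact slightly more careful than the paper in recording the positive-scaling identity for $\log_\theta$ and in flagging the degenerate case $r \in R_\theta$, which the paper glosses over.
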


\begin{proof}
If $\rl(r) > 0$, then the assumptions ensure that the map $\kappa : \R_{\geq 0} \ra \C$ defined by $\kappa : t \mapsto r(1 + t)$ belongs to $\calK_{\theta,r}$. Thus, for all $s \in \C$,
\[ \gfn^\theta(s,r) = \exp(r)\int_0^\infty (r + tr)^s\exp(-(r + tr))r\dx[t] = r^{s + 1}\int_0^\infty (1 + t)^s\exp(-tr)\dx[t] .\]
The change of variables $x = -t$ completes the proof.

The proof for $\lgfn^\theta(s,r)$ for $r \in \C_{< 0}$ is similar. If $\rl(s) > -1$,
\[ \lgfn^\theta(s,r) = \exp(r)\int_0^1 (tr)^s\exp(-tr)r\dx[t] = r^{s + 1}\int_0^1 t^s\exp(r - tr)\dx[t] ,\]
and then we make the change of variables $x = 1 - t$.
\end{proof}

\section{The set $\trable$ and operations on functions in $\trable$} \label{sec: transforms}

\subsection{Complex analogues} \label{sec: complex analogues}

We aim to find operations in the complex setting that correspond to the $p$-adic operations $\calS$, $\sigma$, and $\inttmap$. The starting point is the formula
\beq \label{x = -1 parts}
\int_{\Zp} \Big((\one - \xtox)^{\myst y} \myst (\sigma(\phi))\Big)\dx[\dirac] = \phi(0) + y\int_{\Zp} ((\one - \xtox)^{\myst(y - 1)} \myst \phi)\dx[\dirac] ,
\eeq
which follows immediately from Proposition~\ref{integration by parts} in the case $x = -1$ via the observation that $\inttmap^0(\phi)(y) = \twovar{\phi}(0,y) = \phi(0)$. Proposition~\ref{integration by parts} and its special case (\ref{x = -1 parts}) are a form of integration by parts, with $\sigma$ playing a role of differentiation. Therefore, to arrive at a complex version of this theory, we look to choose $a,b \in \R \cup \{-\infty,\infty\}$, with $a < b$, such that the formula
\[ \int_a^b (1 - x)^y\phi'(x)\dx = \phi(0) + y\int_a^b (1 - x)^{y - 1}\phi(x)\dx \]
holds. It will do so as long as the term $[\phi(x)]_a^b$ in the usual integration-by-parts formula is equal to $\phi(0)$. The natural choices for $a$ and $b$, therefore, are $b = 0$ to obtain $\phi(0)$, and $a = -\infty$ with the assumption that $(1 - x)^y\phi(x) \to 0$ as $x \to -\infty$. This leads us to the space $\trable$ defined below. Note that for $x \in \R_{\leq 0}$ and $s \in \C$, $(1 - x)^s$ is always to be interpreted as $\exp(s\log(1 - x))$, where $\log$ is the usual real logarithm.

Let $\Omega$ be either $\C$ or $\Cp$. We will say that we are in the \emph{complex case} or the \emph{$p$-adic case} as applicable, and set
\begin{align*}
\pret &=
\begin{cases}
\R_{\leq 0} & \text{if $\Omega = \C$,} \\
\Zp & \text{if $\Omega = \Cp$,}
\end{cases} \\[2ex]
\postt &=
\begin{cases}
\C & \text{if $\Omega = \C$,} \\
\Zp & \text{if $\Omega = \Cp$.}
\end{cases}
\end{align*}
Elements in $\pret$ will often be denoted by $x$, and elements in $\postt$ by $s$ or sometimes $y$.

We will say that a function $f : \R_{\leq 0} \ra \C$ is \emph{differentiable} if the two-sided derivative exists at each point less than $0$ and the one-sided derivative (on the left) exists at $0$.

In the complex case, if $a \in \R$, define a set $\trable[a]$ of functions $\pret \ra \Omega$ by declaring that $\phi \in \trable[a]$ if it is differentiable and there exist $C > 0$ and $X \leq 0$ such that $|\phi(x)| \leq C(1 - x)^{-a - 1}$ for all $x \leq X$. We then let
\[ \trable = \bigcap_{a \in \R} \trable[a] .\]
In the $p$-adic case, we simply let $\trable = \cfns{\Zp}{\Cp}$ and do not consider the sets $\trable[a]$. Of course, all these sets are vector spaces over $\Omega$.

In the complex case, the condition for membership in $\trable[a]$ ensures that if $S$ is a non-empty compact subset of $\C_{< a}$ and $\eps$ is chosen such that $\rl(s) < a - \eps$ for all $s \in S$, then as long as $x \leq X$,
\[ |(1 - x)^s\phi(x)| \leq (1 - x)^{a - \eps}(1 - x)^{-a - 1} = (1 - x)^{-1 - \eps} .\]
Therefore, if $M,N \in \Z$, $M \leq N \leq X$, and $s \in S$,
\[ \left|\int_M^N (1 - x)^s\phi(x)\dx\right| \leq \int_M^N (1 - x)^{-1 - \eps}\dx = \frac{1}{\eps}((1 - N)^{-\eps} - (1 - M)^{-\eps}) ,\]
and this converges to zero as $M,N \to -\infty$ with $M \leq N$. Thus, we have a well-defined analytic function
\bea
\C_{< a} &\ra& \C \\*
s &\mapsto& \int_{-\infty}^0 (1 - x)^s\phi(x)\dx \eqcomb{$\phi \in \trable[a]$}[.]
\eea

The following is a useful sufficient condition for a function to be in $\trable$ in the complex case.

\begin{prop} \label{sufficiently general fn in trable}
Let $F : \R_{\leq 0} \ra \C$ be differentiable, and assume that there exist $X \in \R_{\leq -1}$ and $\nu \in \R_{> 0}$ such that $\rl(F(x)) \leq -(-x)^\nu$ for all $x \leq X$. If $g : \R_{\leq 0} \ra \C$ is a polynomial function or, more generally, a finite $\C$-linear combination of functions of the form $x \mapsto (1 - x)^y$ where $y \in \C$, then the function $\R_{\leq 0} \ra \C$, $x \mapsto g(x)\exp(F(x))$, is in $\trable$.
\end{prop}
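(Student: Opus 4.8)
The plan is to verify directly, for every $a \in \R$, the two conditions defining membership in $\trable[a]$ in the complex case — differentiability and the prescribed growth bound — and then conclude $\phi \in \trable = \bigcap_{a \in \R}\trable[a]$, where $\phi(x) = g(x)\exp(F(x))$. Differentiability is immediate and I would dispatch it first: each function $x \mapsto (1-x)^y = \exp(y\log(1-x))$ is differentiable on $\R_{\leq 0}$ since $1 - x \geq 1 > 0$ there, so the finite linear combination $g$ is differentiable; $F$ is differentiable by hypothesis, whence so is $\exp(F)$ by the chain rule, and therefore so is the product $\phi$.

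The substance is the growth estimate. Writing $g(x) = \sum_j c_j(1-x)^{y_j}$ as a finite sum, I would use that $|(1-x)^{y_j}| = (1-x)^{\rl(y_j)}$ for $x \leq 0$ (the real logarithm being in force), together with $1 - x \geq 1$, to bound $|g(x)| \leq K(1-x)^M$ for all $x \leq 0$, where $K = \sum_j |c_j|$ and $M = \max_j \rl(y_j)$. On the exponential factor, the hypothesis gives $|\exp(F(x))| = \exp(\rl(F(x))) \leq \exp(-(-x)^\nu)$ for all $x \leq X$. Multiplying the two bounds yields
\[ |\phi(x)| \leq K(1-x)^M\exp(-(-x)^\nu) \quad\text{for all } x \leq X. \]

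It then remains to compare the right-hand side with $(1-x)^{-a-1}$ for a fixed but arbitrary $a \in \R$. I would examine the ratio $(1-x)^{M+a+1}\exp(-(-x)^\nu)$; substituting $u = -x$ turns this into $(1+u)^{M+a+1}\exp(-u^\nu)$, which tends to $0$ as $u \to \infty$ because $\nu > 0$ forces the exponential decay to outpace any polynomial growth in $u$. Being continuous on $\{x \leq X\}$ and vanishing at $-\infty$, this ratio is bounded above there by some constant $C > 0$, so that $|\phi(x)| \leq C(1-x)^{-a-1}$ for all $x \leq X$. This exhibits $\phi \in \trable[a]$, and since $a$ was arbitrary, $\phi \in \trable$.

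I expect the only mildly delicate point — and the crux that makes the single hypothesis $\rl(F(x)) \leq -(-x)^\nu$ do all the work — to be the observation that $\exp(-(-x)^\nu)$ decays faster than every power of $1 - x$, which is precisely what lets the growth bound hold for all $a$ at once; everything else is routine bookkeeping with absolute values of complex powers.
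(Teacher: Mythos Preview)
Your proof is correct and follows essentially the same approach as the paper: both hinge on the observation that $\exp(-(-x)^\nu)$ decays faster than any power of $1-x$, and both verify the $\trable[a]$ bound for arbitrary $a$ via this comparison. The only organizational difference is that the paper first reduces to the case $g = \one$ (noting that $\trable$ is stable under multiplication by $(1-x)^y$) and then proves that special case, whereas you handle general $g$ directly by bounding $|g(x)| \leq K(1-x)^M$ and absorbing the extra power into the exponent --- a cosmetic variation, not a different idea.
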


\begin{proof}
If $\phi \in \trable$, then so is the map $x \mapsto (1 - x)^y\phi(x)$ for every $y \in \C$, so it is enough to prove the assertion in the case where the function $g$ is the constant function $x \mapsto 1$. For this, let $a \in \R$ and note that
\[ (a + 1)\log(1 - x) - (-x)^\nu \to -\infty \eqcom{as $x \to -\infty$,} \]
so
\[ \frac{\exp(-(-x)^\nu)}{(1 - x)^{-a - 1}} \to 0 \eqcom{as $x \to -\infty$.} \]
Therefore, there is $X_1 \leq 0$ such that $\exp(-(-x)^\nu) \leq (1 - x)^{-a - 1}$ for all $x \leq X_1$, so as long as $x \leq \min(X,X_1)$,
\[ |\mathopen{}\exp(F(x))| = \exp(\rl(F(x))) \leq \exp(-(-x)^\nu) \leq (1 - x)^{-a - 1} ,\]
so the function $x \mapsto \exp(F(x))$ is in $\trable[a]$.
\end{proof}

\subsection{Operations on functions $\phi \in \trable$}

As much as possible in the remainder, we will give an exposition that emphasizes the several parallels between the complex and $p$-adic sides of the theory. This will mean using the same symbols in both settings, such as $\Omega$ for both $\C$ and $\Cp$ as in Section~\ref{sec: complex analogues}, and stating the results with little or no reference to the particular case. Of course, the proofs of results will need to be broken up into the two cases, but even within proofs some parallels can be observed.

It will be convenient to use, in both the complex case and the $p$-adic, the notation $\one : \pret \ra \Omega$ and $\xtox : \pret \ra \Omega$ for the functions defined by
\begin{align*}
\one : x &\mapsto 1 ,\\
\xtox : x &\mapsto x .
\end{align*}

If $\phi \in \trable$ and $y \in \postt$, we have the function $\calS^y(\phi) : \pret \ra \Omega$ given by
\[ \calS^y(\phi) =
\begin{cases}
(\one - \xtox)^y\phi & \text{in the complex case,} \\
(\one - \xtox)^{\myst y} \myst \phi & \text{in the $p$-adic case.}
\end{cases}
\]
In the complex case, this is a definition, and in the $p$-adic case, we proved this fact in Proposition~\ref{calS acting via one - xtox}. We observe in passing the special case where $y = 1$:
\[ \calS^1(\phi)(x) =
\begin{cases}
\phi(x) - x\phi(x) & \text{in the complex case,} \\
\phi(x) - x\phi(x - 1) & \text{in the $p$-adic case.}
\end{cases}
\]

As we suggested in our discussion in Section~\ref{sec: complex analogues}, $\sigma$ should be the differentiation operator in the complex case. Thus, $\sigma(\phi) : \pret \ra \Omega$ is given by
\[ \sigma(\phi)(x) =
\begin{cases}
\phi'(x) & \text{in the complex case,} \\
\phi(x + 1) & \text{in the $p$-adic case (a definition in Section~\ref{sec: notation}),}
\end{cases}
\]
and then it is natural to define $\gent{\phi} : \postt \ra \Omega$ in the complex case so as to match its description as an integral transform in the $p$-adic case:
\[ \gent{\phi}(s) =
\begin{cases}
\displaystyle \int_{-\infty}^0 (1 - x)^s\phi(x)\dx & \text{in the complex case,} \\[3ex]
\displaystyle \int_{\Zp} \big((\one - \xtox)^{\myst s} \myst \phi\big)\dx[\dirac] & \text{in the $p$-adic case.}
\end{cases}
\]
Again, this is a definition in the complex case and a fact in the $p$-adic. If $a \in \R$ and $\phi \in \trable[a]$ in the complex case, then the integral formula for $\gent{\phi}$ above defines a function $\C_{< a} \ra \C$.

We think of $\gent{\phi}$ as an incomplete Mellin transform or a $p$-adic incomplete Mellin transform of $\phi$, according to which case we are in. The reason for this viewpoint is that, in the complex case, the change of integration variable $t = 1 - x$ gives
\[ \gent{\phi}(s) = \int_1^\infty t^s\phi(1 - t)\dx[t] .\]
The usual Mellin transform, if it exists, of a function $\psi : \R_{> 0} \ra \C$ is given by
\[ M(\psi) : s \mapsto \int_0^\infty t^{s - 1}\psi(t)\dx[t] .\]
Thus, if we set $\psi(x) = \phi(1 - x)$, then $M(\psi)(s + 1)$ and $\gent{\phi}(s)$ are both integrals of $t^s\phi(1 - t)\dx[t]$, with the first being an integral from $0$ to $\infty$ and the second an integral from $1$ to $\infty$, thus being `incomplete'.

\subsection{The functions $\phi_f$ and $\Phi_f$} \label{sec: compatible}

We will say that a power series $f(t) \in \Omega\pwr{t}$ is \emph{compatible}, respectively, \emph{$a$-compatible} (where $a \in \R$), if
\begin{itemize}
\item (in the complex case) $f(t)$ has a positive radius of convergence $\rho_f$, and there exist a complex domain $V$ containing $\pret = \R_{\leq 0}$, a positive real number $c < \rho_f$, and an analytic function $\widehat{f} : V \ra \C$ such that
  \begin{enumerate}[label=(\roman*)]
  \item $\widehat{f}(x) = f(x)$ for all $x \in V$ with $|x| < c$, and
  \item the function $\pret \ra \C$, $x \mapsto \exp(\widehat{f}(x))$, is in $\trable$, respectively, $\trable[a]$.
  \end{enumerate}
\item (in the $p$-adic case) $f(t) \in \pint\pwr{t}$, $f(0) \in \expdom$, and $f'(0) \in \punits$.
\end{itemize}

If $f(t)$ is compatible, or $a$-compatible as needed in the complex case, we associate to it functions $\phi_f : \pret \ra \Omega$ and $\Phi_f : \postt \ra \Omega$.
\begin{itemize}
\item \textbf{Definition of $\phi_f$ in the complex case.} If $\widehat{f}_1 : V_1 \ra \C$ and $\widehat{f}_2 : V_2 \ra \C$ are two analytic functions as in the compatibility (or $a$-compatibility) definition above, then the identity theorem guarantees that $\widehat{f}_1$ and $\widehat{f}_2$ agree on $V_1 \cap V_2$ and therefore on $\pret$, so we may define $\phi_f : \pret \ra \C$ by
\[ \phi_f(x) = \exp(\widehat{f}(x)) \]
for any $\widehat{f}$, being independent of this choice. The function $\phi_f$ is in $\trable$ or $\trable[a]$, as applicable.
\item \textbf{Definition of $\phi_f$ in the $p$-adic case.} The compatibility condition ensures that $f(t)$ meets the hypotheses of Proposition~\ref{convergence of egf coeffs}, so we may define $\phi_f$ to be the unique function in $\cfns{\Zp}{\Cp} = \trable$ such that
\[ \actcorr(\phi_f) = \gexp(f(t)) .\]
\end{itemize}
Hence, in either case we define
\[ \Phi_f = \gent{\phi_f} ,\]
a function $\postt \ra \Omega$ if $f(t)$ is compatible, and a function $\C_{< a} \ra \Omega$ if we are in the complex case and $f(t)$ is $a$-compatible.

\subsection{A functional equation for $\Phi_f$ in the polynomial case}

\begin{lemma} \label{gent and sigma}
Let $\phi \in \trable$. If $\sigma(\phi) \in \trable$ (automatically true in the $p$-adic case), then
\[ \gent{\sigma(\phi)}(s) = \phi(0) + s\gent{\phi}(s - 1) \eqcom{for all $s \in \postt$.} \]
In the complex case, if $\phi \in \trable[a]$ and $\phi' \in \trable[a']$, then the same equation holds as long as $\rl(s) < \min(a + 1,a')$.
\end{lemma}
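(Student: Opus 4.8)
The plan is to treat the two cases separately, with the $p$-adic case being essentially a restatement of a formula already in hand and the genuine work confined to the complex case.

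For the $p$-adic case, note first that $\sigma(\phi) \in \cfns{\Zp}{\Cp} = \trable$ automatically, so there is nothing to verify about membership. By the $p$-adic definition of $\gent{\cdot}$ we have $\gent{\sigma(\phi)}(s) = \int_{\Zp}\big((\one - \xtox)^{\myst s} \myst \sigma(\phi)\big)\dx[\dirac]$ and $\gent{\phi}(s - 1) = \int_{\Zp}\big((\one - \xtox)^{\myst(s - 1)} \myst \phi\big)\dx[\dirac]$, so the claimed identity is precisely equation (\ref{x = -1 parts}) with $y = s$. Since that formula was established for all $y \in \Zp$ directly from Proposition~\ref{integration by parts}, the $p$-adic case holds for every $s \in \postt = \Zp$.

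For the complex case I would prove the refined statement (with $\phi \in \trable[a]$, $\sigma(\phi) = \phi' \in \trable[a']$, and $\rl(s) < \min(a + 1, a')$) and then recover the unrefined one. The heart of it is integration by parts on a finite interval: for $M \le 0$, using $\tddx(1 - x)^s = -s(1 - x)^{s - 1}$ and $(1 - 0)^s = 1$,
\[ \int_M^0 (1 - x)^s\phi'(x)\dx = \phi(0) - (1 - M)^s\phi(M) + s\int_M^0 (1 - x)^{s - 1}\phi(x)\dx . \]
I would then let $M \to -\infty$ and pass to the limit term by term: the left-hand side tends to $\gent{\sigma(\phi)}(s)$ because $\phi' \in \trable[a']$ and $\rl(s) < a'$; the remaining integral tends to $\gent{\phi}(s - 1)$ because $\phi \in \trable[a]$ and $\rl(s - 1) < a$; and the boundary term vanishes because the growth bound $|\phi(M)| \le C(1 - M)^{-a - 1}$ (valid for $M$ sufficiently negative) gives $|(1 - M)^s\phi(M)| \le C(1 - M)^{\rl(s) - a - 1}$, whose exponent is negative exactly when $\rl(s) < a + 1$.

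The main obstacle, albeit a mild one, is the simultaneous bookkeeping of three conditions at $-\infty$ — existence of $\gent{\sigma(\phi)}(s)$, existence of $\gent{\phi}(s - 1)$, and decay of the boundary term — together with the verification that the single hypothesis $\rl(s) < \min(a + 1, a')$ controls all three. These are the same estimates already recorded when $\gent{\cdot}$ was defined on $\trable[a]$, so no new analysis is needed beyond noting which inequality governs which term. Finally, to obtain the unrefined statement in the complex case, given any $s \in \C$ with $\phi, \sigma(\phi) \in \trable = \bigcap_a \trable[a]$, I would simply choose $a$ and $a'$ large enough that $\rl(s) < \min(a + 1, a')$ and invoke the refined identity.
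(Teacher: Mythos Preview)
Your proposal is correct and follows essentially the same approach as the paper: the $p$-adic case is handled by invoking the $x=-1$ integration-by-parts formula (equation~(\ref{x = -1 parts}), equivalently Proposition~\ref{integration by parts}), and the complex case is classical integration by parts together with the decay estimate $|(1-x)^s\phi(x)|\le C(1-x)^{\rl(s)-a-1}$ to kill the boundary term when $\rl(s)<a+1$. The paper presents the complex computation directly on the improper integral rather than on a truncated interval $[M,0]$, but this is purely a cosmetic difference.
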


\begin{proof}
The $p$-adic case is obtained by taking $x = -1$ in Proposition~\ref{integration by parts}, which we recall is a $p$-adic analogue of integration by parts.

For the complex case, it is enough to prove the second assertion, which is also done by integration by parts. If $\phi \in \trable[a]$, then there are $C > 0$ and $X \leq 0$ such that $|\phi(x)| \leq C(1 - x)^{-a - 1}$ for all $x \leq X$. Therefore, if $\rl(s) < a + 1$, say $\rl(s) = a + 1 - \eps$, and if $x \leq X$, then
\[ |(1 - x)^s\phi(x)| \leq (1 - x)^{a + 1 - \eps} \cdot C(1 - x)^{-a - 1} = C(1 - x)^{-\eps} .\]
Thus, $(1 - x)^s\phi(x) \to 0$ as $x \to -\infty$, so as long as $\rl(s) < a'$ as well,
\begin{align*}
\gent{\sigma(\phi)}(s) &= \int_{-\infty}^0 (1 - x)^s\phi'(x)\dx \\
&= \Big[(1 - x)^s\phi(x)\Big]_{-\infty}^0 + \int_{-\infty}^0 s(1 - x)^{s - 1}\phi(x)\dx \\
&= \phi(0) + s\gent{\phi}(s - 1) .
\end{align*}
\end{proof}

\begin{lemma} \label{gent and calS}
If $\phi \in \trable$, then for all $s,y \in \postt$,
\[ \gent{\calS^y(\phi)}(s) = \gent{\phi}(s + y) .\]
In the complex case, if $\phi \in \trable[a]$, then the same equation holds as long as $\rl(s) < a - \rl(y)$.
\end{lemma}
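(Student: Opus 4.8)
The plan is to treat the two cases separately, since the $p$-adic case follows almost immediately from results already established, while the complex case reduces to a pointwise identity of integrands together with a short bookkeeping argument about where the two sides converge.

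For the $p$-adic case, I would first observe that the definition of $\gent{\cdot}$ coincides with the transform $\inttrans{\cdot}$: taking $x = -1$ in Proposition~\ref{inttrans as an integral transform} gives $\inttrans{\phi}(s) = \int_{\Zp} \big((\one - \xtox)^{\myst s} \myst \phi\big)\dx[\dirac] = \gent{\phi}(s)$ for every $\phi \in \cfns{\Zp}{\Cp} = \trable$ and every $s \in \Zp = \postt$. Granting this, the claimed identity is precisely Corollary~\ref{calS and shift for y in Zp}, which states $\inttrans{\calS^y(\phi)}(s) = \inttrans{\phi}(s + y)$. So in the $p$-adic case there is nothing further to do, and no restriction on $s$ is needed.

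For the complex case, the key point is the pointwise identity of integrands. Since $\calS^y(\phi)(x) = (1 - x)^y\phi(x)$ by definition, and since $1 - x$ is a positive real number for $x \in \R_{\leq 0}$ (so that the exponent-addition law $w^{s + y} = w^s w^y$ holds for the real logarithm), the integrand of $\gent{\calS^y(\phi)}(s)$ is
\[ (1 - x)^s\calS^y(\phi)(x) = (1 - x)^s(1 - x)^y\phi(x) = (1 - x)^{s + y}\phi(x) ,\]
which is exactly the integrand of $\gent{\phi}(s + y)$. Hence the two sides agree wherever both integrals converge.

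The remaining, and only slightly delicate, step is to confirm that both integrals converge on the stated half-plane $\rl(s) < a - \rl(y)$. First I would check that $\calS^y(\phi) \in \trable[a - \rl(y)]$ when $\phi \in \trable[a]$: the function $x \mapsto (1 - x)^y\phi(x)$ is differentiable, and from $|\phi(x)| \leq C(1 - x)^{-a - 1}$ together with $|(1 - x)^y| = (1 - x)^{\rl(y)}$ one gets $|(1 - x)^y\phi(x)| \leq C(1 - x)^{\rl(y) - a - 1} = C(1 - x)^{-(a - \rl(y)) - 1}$ for $x$ sufficiently negative. Therefore $\gent{\calS^y(\phi)}(s)$ is defined for $\rl(s) < a - \rl(y)$, by the convergence criterion established in Section~\ref{sec: complex analogues}. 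On the other side, $\gent{\phi}(s + y)$ is defined precisely when $\rl(s + y) < a$, i.e. again $\rl(s) < a - \rl(y)$. The two half-planes coincide, so on it the integrand identity yields the asserted equality. The hard part, such as it is, is simply this matching of the convergence regions; there is no genuine analytic obstacle.
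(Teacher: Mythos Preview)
Your proof is correct and follows essentially the same approach as the paper: the $p$-adic case is reduced to Corollary~\ref{calS and shift for y in Zp} exactly as the paper does, and the complex case is the same integrand identity $(1-x)^s(1-x)^y = (1-x)^{s+y}$. Your treatment of the complex case is in fact slightly more thorough than the paper's, which writes down the integral computation without pausing to verify that $\calS^y(\phi) \in \trable[a - \rl(y)]$; your explicit check of this and the matching of convergence regions is a welcome addition.
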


\begin{proof}
In the complex case, we have
\begin{align*}
\gent{\calS^y(\phi)}(s) = \int_{-\infty}^0 (1 - x)^s(1 - x)^y\phi(x)\dx &= \int_{-\infty}^0 (1 - x)^{s + y}\phi(x)\dx \\
&= \gent{\phi}(s + y) ,
\end{align*}
and the $p$-adic case was proven in Corollary~\ref{calS and shift for y in Zp}. One may also give a proof in the $p$-adic case that mimics the foregoing proof in the complex case, using Propositions~\ref{inttrans as an integral transform}, \ref{calS a Zp-action}, and \ref{calS acting via one - xtox}:
\begin{align*}
\gent{\calS^y(\phi)}(s) = \int_{\Zp} \Big((\one - \xtox)^{\myst s} \myst (\one - \xtox)^{\myst y} \myst \phi\Big)\dx[\dirac] &= \int_{\Zp} \big((\one - \xtox)^{\myst(s + y)} \myst \phi\big)\dx[\dirac] \\
&= \gent{\phi}(s + y) .
\end{align*}
\end{proof}

The function $\Phi_f$ satisfies the following functional equation.

\begin{theorem} \label{functional equation}
Let $f(t) = \sum_{k = 0}^n a_k t^k \in \Omega[t]$ with $n \geq 1$ and $a_n \not= 0$. Assume in the complex case that $(-1)^n\rl(a_n) < 0$ and in the $p$-adic case that $f(t) \in \pint\pwr{t}$, $f(0) \in \expdom$, and $f'(0) \in \punits$. Then $f(t)$ is compatible,
\beq \label{functional equation: prep}
\sigma(\phi_f) = \sum_{m = 0}^{n - 1} (-1)^m\left(\sum_{k = m + 1}^n k\ch{k - 1}{m}a_k\right)\calS^m(\phi_f) ,
\eeq
and for all $s \in \postt$,
\beq \label{functional equation: actual}
\exp(a_0) + s\Phi_f(s - 1) = \sum_{m = 0}^{n - 1} (-1)^m \left(\sum_{k = m + 1}^n k\ch{k - 1}{m}a_k\right)\Phi_f(s + m) .
\eeq
In both the complex case and the $p$-adic case, $\exp$ denotes the exponential power series.
\end{theorem}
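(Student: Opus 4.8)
The plan is to handle the three assertions in order, exploiting the fact that the coefficients $(-1)^m\sum_{k=m+1}^n k\ch{k-1}{m}a_k$ in both (\ref{functional equation: prep}) and (\ref{functional equation: actual}) come from a single polynomial identity: the re-expansion of $f'$ in the basis $\{(1-t)^m\}_{m\geq 0}$. I would prove that identity once and feed it into both the complex and $p$-adic computations, since $\phi_f$ and $\Phi_f$ are only defined after compatibility is known.

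\textbf{Compatibility.} In the $p$-adic case there is nothing to prove: the stated hypotheses $f(t)\in\pint\pwr{t}$, $f(0)\in\expdom$, $f'(0)\in\punits$ are exactly the definition of compatibility, and $\phi_f$ then exists by Proposition~\ref{convergence of egf coeffs}. In the complex case, since $f$ is a polynomial, hence entire, I take $V=\C$ and $\widehat f=f$, so condition~(i) of the compatibility definition is automatic and it remains to check condition~(ii), namely that $x\mapsto\exp(f(x))$ lies in $\trable$. For this I apply Proposition~\ref{sufficiently general fn in trable} with $g=\one$ and $F=f$. For real $x<0$ one has $x^n=(-1)^n(-x)^n$, so $\rl(f(x))=(-1)^n\rl(a_n)(-x)^n+O((-x)^{n-1})$, and the hypothesis $(-1)^n\rl(a_n)<0$ forces $\rl(f(x))$ to tend to $-\infty$ at the rate of a negative multiple of $(-x)^n$. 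Choosing any $\nu\in(0,n)$ then gives $\rl(f(x))\leq -(-x)^\nu$ for $x$ sufficiently negative, which is precisely the hypothesis of that proposition; so $\phi_f(x)=\exp(f(x))$ is well defined and lies in $\trable$.

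\textbf{The identity (\ref{functional equation: prep}).} The algebraic crux is the expansion
\[ t^{k-1}=\big(1-(1-t)\big)^{k-1}=\sum_{m=0}^{k-1}(-1)^m\ch{k-1}{m}(1-t)^m, \]
which, substituted into $f'(t)=\sum_{k=1}^n k a_k t^{k-1}$ and combined with an interchange of summation, yields $f'(t)=\sum_{m=0}^{n-1}(-1)^m\big(\sum_{k=m+1}^n k\ch{k-1}{m}a_k\big)(1-t)^m$. In the complex case, $\sigma(\phi_f)=\phi_f'=f'\cdot\phi_f$ by the chain rule, and since $\calS^m(\phi_f)=(\one-\xtox)^m\phi_f$, multiplying the displayed expansion (with $t$ replaced by $x$) through by $\phi_f(x)$ gives (\ref{functional equation: prep}) at once. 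In the $p$-adic case I use $\actcorr(\sigma(\phi_f))=\actcorr(\phi_f)'=\gexp(f)'=f'\gexp(f)=f'\actcorr(\phi_f)$, substitute the same expansion of $f'$, and apply Proposition~\ref{(1 - t)^y} termwise to rewrite $(1-t)^m\actcorr(\phi_f)=\actcorr(\calS^m(\phi_f))$; the injectivity of $\actcorr$ then delivers (\ref{functional equation: prep}).

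\textbf{The functional equation (\ref{functional equation: actual}).} I apply $\gent{\cdot}$ to both sides of (\ref{functional equation: prep}). To the left-hand side I apply Lemma~\ref{gent and sigma}, obtaining $\gent{\sigma(\phi_f)}(s)=\phi_f(0)+s\Phi_f(s-1)$, where $\phi_f(0)=\exp(a_0)$: in the complex case this is $\exp(f(0))$, and in the $p$-adic case it is the constant term of $\gexp(f(t))=\exp(f(0))\exp(f(t)-f(0))$. To the right-hand side I apply linearity together with Lemma~\ref{gent and calS}, which gives $\gent{\calS^m(\phi_f)}(s)=\Phi_f(s+m)$ for each $m$; assembling the pieces produces (\ref{functional equation: actual}). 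The main obstacle is not any single deep step but the complex-case bookkeeping: one must check that Lemmas~\ref{gent and sigma} and~\ref{gent and calS} apply on all of $\postt=\C$ rather than on a half-plane. This holds because $\phi_f\in\trable=\bigcap_a\trable[a]$ and, by a second application of Proposition~\ref{sufficiently general fn in trable} with $g=f'$, the derivative $\phi_f'=f'\phi_f$ is again in $\trable$; hence the parameters $a,a'$ in those lemmas may be taken arbitrarily large, so the restrictions $\rl(s)<\min(a+1,a')$ and $\rl(s)<a-m$ impose no constraint. In the $p$-adic case these subtleties vanish and the argument is purely formal.
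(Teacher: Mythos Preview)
Your proposal is correct and follows essentially the same route as the paper's proof: the same binomial re-expansion of $f'(t)$ in powers of $(1-t)$, the same chain-rule/$\actcorr$ argument for (\ref{functional equation: prep}) in the two cases, and the same application of Lemmas~\ref{gent and sigma} and~\ref{gent and calS} (with Proposition~\ref{sufficiently general fn in trable} invoked again for $\phi_f'$) to obtain (\ref{functional equation: actual}). You supply more detail than the paper does on why $(-1)^n\rl(a_n)<0$ feeds into Proposition~\ref{sufficiently general fn in trable} and on the half-plane bookkeeping, but the structure is the same.
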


\begin{proof}
The compatibility condition is met by definition in the $p$-adic case, and in the complex case we may apply Proposition~\ref{sufficiently general fn in trable}.

Proof of (\ref{functional equation: prep}). We begin with a calculation that will be used in both cases, the complex and the $p$-adic:
\begin{align}
f'(t) &= \sum_{k = 1}^n ka_k t^{k - 1} \nonumber \\
&= \sum_{k = 1}^n ka_k(1 - (1 - t))^{k - 1} \nonumber \\
&= \sum_{k = 1}^n ka_k \sum_{m = 0}^{k - 1} (-1)^m\ch{k - 1}{m}(1 - t)^m \nonumber \\
&= \sum_{k = 1}^n ka_k \sum_{m = 0}^{n - 1} (-1)^m\ch{k - 1}{m}(1 - t)^m \eqcom{since $\ch{k - 1}{m} = 0$ if $k \leq l \leq n - 1$} \nonumber \\
&= \sum_{m = 0}^{n - 1} (-1)^m \left(\sum_{k = m + 1}^n k\ch{k - 1}{m}a_k\right)(1 - t)^m . \label{func eqn prep}
\end{align}
Now, in the complex case, $\sigma(\phi_f)(x) = f'(x)\phi_f(x)$, so the foregoing calculation tells us that
\[ \sigma(\phi_f)(x) = \sum_{m = 0}^{n - 1} (-1)^m \left(\sum_{k = m + 1}^n k\ch{k - 1}{m}a_k\right)(1 - x)^m\phi_f(x) ,\]
and then the result follows by definition of $\calS^m$.

The argument in the $p$-adic case is similar but works at the level of exponential generating functions instead. Recall that, by definition, $\actcorr(\phi_f) = \gexp(f(t))$. Then
\begin{align*}
\actcorr(\sigma(\phi_f)) &= \actcorr(\phi_f)' \eqcom{by (\ref{shiftu and diff})} \\
&= \ddx[t]\Big(\mathopen{}\gexp(f(t))\Big) \\
&= f'(t)\gexp(f(t)) \\
&= f'(t)\actcorr(\phi_f) \\
&= \sum_{m = 0}^{n - 1} (-1)^m \left(\sum_{k = m + 1}^n k\ch{k - 1}{m}a_k\right)(1 - t)^m\actcorr(\phi_f) \eqcom{by (\ref{func eqn prep})} \\
&= \sum_{m = 0}^{n - 1} (-1)^m \left(\sum_{k = m + 1}^n k\ch{k - 1}{m}a_k\right)\actcorr(\calS^m(\phi_f)) \eqcom{by Prop.~\ref{(1 - t)^y}} \\
&= \actcorr\left(\sum_{m = 0}^{n - 1} (-1)^m \left(\sum_{k = m + 1}^n k\ch{k - 1}{m}a_k\right)\calS^m(\phi_f)\right) .
\end{align*}
The injectivity of $\actcorr$ now gives the desired result.

Proof of (\ref{functional equation: actual}). If $s \in \postt$,
\begin{align*}
\exp(a_0) &+ s\Phi_f(s - 1) \\*
&= \phi_f(0) + s\gent{\phi_f}(s - 1) \\
&= \gent{\sigma(\phi_f)}(s) \eqcom{by Lemma~\ref{gent and sigma}} \\
&= \sum_{m = 0}^{n - 1} (-1)^m\left(\sum_{k = m + 1}^n k\ch{k - 1}{m}a_k\right)\gent{\calS^m(\phi_f)}(s) \eqcom{by (\ref{functional equation: prep})} \\
&= \sum_{m = 0}^{n - 1} (-1)^m \left(\sum_{k = m + 1}^n k\ch{k - 1}{m}a_k\right)\gent{\phi_f}(s + m) \eqcom{by Lemma~\ref{gent and calS}} \\
&= \sum_{m = 0}^{n - 1} (-1)^m \left(\sum_{k = m + 1}^n k\ch{k - 1}{m}a_k\right)\Phi_f(s + m) .
\end{align*}
Note that in the complex case, it is legitimate to apply Lemma~\ref{gent and sigma}, because $\phi_f'$ is again in $\trable$ by Proposition~\ref{sufficiently general fn in trable}.
\end{proof}

\section{Interpolation} \label{sec: interpolation}

\subsection{Places}

Let $K$ be a number field. We recall some common notions. In the following, we exclude the trivial absolute value on $K$.

A \emph{place} of $K$, often denoted $\p$, is an equivalence class of absolute values, two absolute values being equivalent if they define the same topology on $K$.

An absolute value $|\cdot|$ is called \emph{non-archimedean} if the values $|n|$ are bounded as $n$ runs through the positive integers, and is called \emph{archimedean} otherwise. One may show that the non-archimedean ones are those satisfying the ultrametric property: $|x + y| \leq \max(|x|,|y|)$ for all $x,y \in K$. If one absolute value in a place is archimedean, then so are the rest, so we can speak of a place itself as being either archimedean or non-archimedean.

We refer the interested reader to \cite[Chp.~II, Sect.~3]{neukirch:alg} and \cite{cassels:global-fields} for further investigation of absolute values. Note that absolute values are called \emph{valuations} in \cite{neukirch:alg}, and that \cite{cassels:global-fields} uses a slightly more general notion of valuation.

\subsubsection{Canonical representatives of places}

It will be convenient to define the \emph{residue characteristic} of a place $\p$ of $K$, denoted $\resch{\p}$, to be the characteristic of the residue field of $\p$ in the non-archimedean case, and to be zero in the archimedean case.

We will assign to each place $\p$ of $K$ a canonical absolute value $|\cdot|_\p$ representing it. If $\p$ is archimedean, then we let $|\cdot|_\p$ be the unique absolute value of the form $x \mapsto |\tau(x)|_\C$ where $\tau : K \ra \C$ is an embedding. See \cite[Chp.~II, Thm.~4.2]{neukirch:alg}. If $K_\p$ is a completion of $K$ with respect to $|\cdot|_\p$, then of course the closure $\R_{K,\p}$ of $\Q$ in $K_\p$ is isomorphic to $\R$. We have a well-defined notion of the real part $\rl_\p(x)$ of an element $x \in K_\p$ with respect to $\p$, the real part being an element of $\R_{K,\p}$. If $\p$ is real, then $\rl_\p(x)$ is simply $x$. When $\p$ is complex, there is $\iota \in K_\p$ such that $\iota^2 = -1$ and $K_\p = \R_{K,\p}(\iota)$, and if $x = a + b\iota$ where $a,b \in \R_{K,\p}$, then its real part is defined to be $\rl_\p(x) = a$, which is independent of the choice of $\iota$.

Now we turn to non-archimedean places, which are in bijection with the non-zero prime ideals of the ring $\oh_K$ of algebraic integers of $K$. Let $\frakm_\p$ denote the prime ideal corresponding to $\p$. For a canonical representative of a non-archimedean place $\p$, we choose the unique absolute value $|\cdot|_\p$ satisfying
\[ |x|_\p = \Big(\tfrac{1}{p}\Big)^{\vp[\p](x)/e_\p} \]
for all $x \in K\st$, where $\vp[\p]$ is the $\frakm_\p$-adic valuation on $K$ and $e_\p$ is the ramification index of $\frakm_\p$ over $\Q$. If $p = \resch{\p}$, then $|p|_\p = \tfrac{1}{p}$.

In either case, the archimedean or the non-archimedean, we let $K_\p$ denote a completion of $K$ with respect to $|\cdot|_\p$. The valued field $(K_\p,|\cdot|_\p)$ is isomorphic, as a valued field, to
\begin{itemize}
\item $(\R,|\cdot|_\R)$ if $\p$ is real,
\item $(\C,|\cdot|_\C)$ if $\p$ is complex,
\item $(\Cp,|\cdot|_{\Cp})$ if $\p$ is non-archimedean, where $p = \resch{\p}$.
\end{itemize}

Other symbols to which we will attach a subscript $\p$ in order to show the dependence of its corresponding notion on $\p$ are
\[ \pint[\p] ,\, \maxid[\p] ,\, \units[\p] ,\, \punits[\p] ,\, \expdom[\p] ,\, \phi_{f,\p} ,\, \Phi_{f,\p} ,\, \pret_\p ,\, \postt_\p .\]
The first five have the obvious meanings in the non-archimedean case, and the rest have the obvious meanings in the general situation.

\subsubsection{The Teichm\"uller character}

If $\p$ is a non-archimedean place, let $\teich_\p : \units[\p] \ra \Omega_\p$ be the Teichm\"uller character as defined in \cite[Sect.~33]{schikhof:ultra-calc}, which is to say that for $x \in \units[\p]$, $\teich_\p(x)$ is the unique root of unity in $\Omega_\p$ of order prime to $\resch{\p}$ such that $|x - \teich_\p(x)|_\p < 1$. We then define, for each $x \in \units[\p]$,
\[ \princof{x} = \frac{x}{\teich_\p(x)} \in \punits[\p] .\]
If $x$ lies in a complete subfield $K$ of $\Omega_\p$, then $\teich_\p(x) \in K$, so $\princof{x} \in K$ as well.

If $\p$ is instead archimedean, we simply set $\princof{x} = x$ for each $x \in \Omega_\p\st$.

\brem
In the case $p = 2$, the following variant is often taken for the Teichm\"uller character on $\Z_2\st$, as in \cite[p.~51]{washington:cyc}. If $x \in \Z_2\st$, let $\widetilde{\teich}_2(x)$ be the unique element of $\{1,-1\}$ such that $x - \widetilde{\teich}_2(x) \in 4\Z_2$. Then one may set $\princof[2]{x}^\sim = x/\widetilde{\teich}_2(x) \in 1 + 4\Z$. Schikhof's version $\teich_2$ above and the modified version $\widetilde{\teich}_2$ are related by
\[ \widetilde{\teich}_2(x) = (-1)^x\teich_2(x) = (-1)^x ,\]
the expression $(-1)^x$ being well defined because $x \in \Z_2$, and then
\[ \princof[2]{x}^\sim = (-1)^x\princof[2]{x} = (-1)^x x .\]
The modified version is used, for example, in Iwasawa theory; see \cite[Theorem~5.11]{washington:cyc}. In what follows, one may use either version in the case $p = 2$, and we leave it to the reader to identify the minor changes required in Theorem~\ref{interpol of incomplete} if $\princof[2]{x}^\sim$ is desired instead of $\princof[2]{x}$. All other places, archimedean and non-dyadic non-archimedean, are unaffected.
\erem

\subsection{Examples}

\bex \label{linear example}
Let $K$ be a number field, and let $f(t) = a + rt$ where $a,r \in K$. Let $\p$ be a place of $K$, and to guarantee that $f(t)$ is compatible with $\p$, assume that
\begin{itemize}
\item if $\p$ is archimedean, then $\rl_\p(r) > 0$, and
\item if $\p$ is non-archimedean, then $a \in \expdom[\p]$ and $r \in \punits[\p]$.
\end{itemize}
By Theorem~\ref{functional equation},
\[ \Phi_{f,\p}(s) = \frac{1}{r}\big(s\Phi_{f,\p}(s - 1) + \exp(a)\big) \eqcom{for all $s \in \postt_\p$,} \]
so
\begin{equation} \label{linear f example: rec rel}
\left.
\begin{aligned}
\Phi_{f,\p}(0) &= \exp(a) ,\\
\Phi_{f,\p}(n) &= \frac{1}{r}\big(n\Phi_{f,\p}(n - 1) + \exp(a)\big) \eqcom{for all $n \in \Z_{\geq 1}$} . \quad
\end{aligned}
\right\}
\end{equation}
Note that $\exp(a) \in K_\p$.

Assume, first, that $a = 0$, so that $\exp(a) = 1 \in K$. Then (\ref{linear f example: rec rel}) shows that the sequence $(\Phi_{f,\p}(n))_{n \geq 0}$ is contained in $K$ and is independent of $\p$. Thus, if $\p$ and $\q$ are two places meeting the hypotheses above (one may be archimedean and the other non-archimedean, or any combination), and if $\Phi_{f,\p} : \postt_\p \ra \Omega_\p$ and $\Phi_{f,\q} : \postt_\q \ra \Omega_\q$ denote the corresponding functions, then
\[ \Phi_{f,\p}(n) = \Phi_{f,\q}(n) \eqcom{for all $n \in \Z_{\geq 0}$.} \]
This equality makes sense, because both sides are in $K$.

Next, assume that $a \not= 0$. Then in the archimedean case, $\exp(a)$ is transcendental by the Lindemann--Weierstrass Theorem (see, for example, \cite[Chap.~4]{murty-rath:trnum}), and in the non-archimedean case, transcendence is proven in \cite{adams:transc-p-adic}; see the last paragraph of Section~3 in that paper. Therefore, again taking two places $\p$ and $\q$ as above, if we let $\alpha_\p = \exp(a)$ where the limit is taken in $K_\p$, and let $\alpha_\q = \exp(a)$ in $K_\q$ instead, then there is a unique isomorphism $\rho_{\p,\q} : K(\alpha_\p) \ra K(\alpha_\q)$ over $K$ mapping $\alpha_\p$ to $\alpha_\q$. Because $\rho_{\p,\q}(r) = r$, applying $\rho_{\p,\q}$ to the version of (\ref{linear f example: rec rel}) for $\p$ results in the version for $\q$, so the sequences $(\rho_{\p,\q}(\Phi_{f,\p}(n)))_{n \geq 0}$ and $(\Phi_{f,\q}(n))_{n \geq 0}$ in $K_\q$ are equal. Thus, via $\rho_{\p,\q}$, the functions $\Phi_{f,\p}$ and $\Phi_{f,\q}$ interpolate each other on $\Z_{\geq 0}$.

Taking $a = 0$ leads to interpolation of $\Gamma^\theta(\cdot,r)$ in the limited setting where $r$ is positive at some archimedean place and $r$ is a principal unit at some non-archimedean place. We omit the details because we will generalize this in Section~\ref{sec: interp of inc gamma}.
\eex

\bex
Let $K$ be a number field, and let $a,b,c \in K$. Suppose that $\p$ is a non-archimedean place of $K$ such that $b/2$ and $c/3$ are $\p$-integral and $a + b + c$ is a $\p$-principal unit, meaning that $\vp[\p](a + b + c - 1) > 0$. Then we may construct a cubic polynomial $f(t) \in K[t]$ such that
\beq \label{cubic example: func eqn}
1 + s\Phi_{f,\p}(s - 1) = a\Phi_{f,\p}(s) + b\Phi_{f,\p}(s + 1) + c\Phi_{f,\p}(s + 2) \eqcom{for all $s \in \Zp$,}
\eeq
where $p = \resch{\p}$, thus obtaining a locally analytic $\p$-adic solution to the recurrence relation
\[ 1 + nx_{n - 1} = ax_n + bx_{n + 1} + cx_{n + 2} .\]
To construct $f(t)$, we observe that the unique solution $(a',b',c') \in K^3$ to the system
\[ a' + 2b' + 3c' = a ,\quad 2b' + 6c' = -b ,\quad 3c' = c \]
is
\[ a' = a + b + c ,\quad b' = -\frac{1}{2}b - c ,\quad c' = \frac{1}{3}c ,\]
all of which elements are $\p$-integral by assumption, and further $a'$ is $\p$-principal. Therefore, the polynomial $f(t) = a't + b't^2 + c't^3$ is compatible with the non-archimedean place $\p$, and if, in addition, $\q$ is an archimedean place such that $\rl_\q(c) > 0$, then $f(t)$ is compatible with $\q$ as well. Thus, by Theorem~\ref{functional equation}, (\ref{cubic example: func eqn}) holds not only as stated but also if $\p$ is replaced with $\q$ and $s$ is replaced by an element of $\Omega_\q$ (a copy of $\C$). If we restrict to the situation $a,b,c \in \oh_K$, $a + b + c = 1$, and $c$ totally positive, then we obtain a solution to the functional equation (\ref{cubic example: func eqn}) for every place $\p$ that either is archimedean or is non-archimedean with residue characteristic greater than $3$.

Whether the various solutions $\Phi_{f,\p}$ can be thought of as interpolating one another is not as straightforward as in the simple situation of Example~\ref{linear example}. Restricting $s$ to be a non-negative integer $m$, we obtain a recurrence relation
\[ \Phi_{f,\p}(m + 2) = \frac{1}{c}\Big(1 + m\Phi_{f,\p}(m - 1) - a\Phi_{f,\p}(m) - b\Phi_{f,\p}(m + 1)\Big) ,\]
from which it follows that the values at all non-negative integers are determined by the two values $\Phi_{f,\p}(0)$ and $\Phi_{f,\p}(1)$. If these were known to be algebraically independent for some place $\p$ and, separately, algebraically independent for another place $\q$, then we could provide a field isomorphism $K_\p \ra K_\q$ over $K$ mapping $\Phi_{f,\p}(0)$ to $\Phi_{f,\q}(0)$ and, likewise, $\Phi_{f,\p}(1)$ to $\Phi_{f,\q}(1)$, and then $\Phi_{f,\p}(m)$ would map to $\Phi_{f,\q}(m)$ for all $m \in \Z_{\geq 0}$ because of the recurrence relation. However, such algebraic independence is not currently known, as far as the author is aware, although there is no reason to doubt it either.
\eex

\subsection{$p$-adic interpolation of incomplete $\Gamma$-functions} \label{sec: interp of inc gamma}

Let $\theta \in \R$. We fix $r \in \Q\st$ and turn to $p$-adic interpolation of $\Gamma^\theta(\cdot,r)$ for primes $p$ satisfying only the assumption that $\vp(r) = 0$. This means that for a given $r$, we move from being able to $p$-adically interpolate $\Gamma^\theta(\cdot,r)$ for only finitely many $p$ (finite if $r \not= 1$, that is) to being able to do so for all \emph{except} finitely many $p$.

For consistency with previous sections, we work with places $\p$ rather than primes $p$, even though our attention will be on the field of rationals.

For our chosen $r \in \Q\st$, let $P_r$ be the finite set of non-archimedean places $\p$ of $\Q$ for which $\vp[\p](r) \not= 0$, and define a power series $f_r(t) \in \Q\pwr{t}$ by $f_r(t) = -r((1 - t)^{\tfrac{1}{r}} - 1)$, which is simply to say
\[ f_r(t) = -r\sum_{k = 1}^\infty (-1)^k\ch{1/r}{k}t^k .\]
An important property of the formal power series $f_r(t)$ is the fact, arising from (\ref{props of a(t)}), that
\[ f_r'(t) = (1 - t)^{\tfrac{1}{r} - 1} .\]

We will need to single out the case where the conditions $\p$ archimedean and $r < 0$ hold simultaneously. This will be called the \emph{exceptional case}. The complementary case, \ie where either (i) $\p$ is non-archimedean or (ii) $\p$ is archimedean and $r > 0$, will be called the \emph{ordinary case}.

\begin{lemma} \label{compatibility lemma}
If $\p$ is a place of $\Q$, archimedean or otherwise, that is not in $P_r$, then in the terminology of Section~\ref{sec: compatible}, the power series $f_r(t)$ is compatible with $\p$ in the ordinary case, and is $(-1)$-compatible with $\p$ in the exceptional case.
\end{lemma}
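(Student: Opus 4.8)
The plan is to handle the non-archimedean places and the (single) archimedean place of $\Q$ separately, since ``compatible'' unwinds to two genuinely different conditions in the two cases. Two facts hold at every place and will be reused: $f_r(t)$ has no constant term, so $f_r(0) = 0$; and since $f_r'(t) = (1 - t)^{\frac{1}{r} - 1}$ (equivalently, the degree-one coefficient of $f_r(t)$ is $-r(-1)\ch{1/r}{1} = 1$), we have $f_r'(0) = 1$.

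For a non-archimedean place $\p \notin P_r$ — which is automatically an ordinary case — I would verify the three conditions of $p$-adic compatibility from Section~\ref{sec: compatible}. The hypothesis $\p \notin P_r$ means $\vp[\p](r) = 0$, so $r \in \Zp\st$ and hence $1/r \in \Zp\st \subseteq \Zp$. The $k$-th coefficient of $f_r(t)$ is $-r(-1)^k\ch{1/r}{k}$, and because $\ch{y}{k} \in \Zp$ for every $y \in \Zp$ while $r \in \Zp$, each coefficient lies in $\Zp \subseteq \pint[\p]$; thus $f_r(t) \in \pint[\p]\pwr{t}$. The two remaining conditions are exactly the common facts above: $f_r(0) = 0 \in \expdom[\p]$ and $f_r'(0) = 1 \in \punits[\p]$. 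This disposes of every non-archimedean place at once.

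For the archimedean place I work in the complex case, with $\pret = \R_{\leq 0}$, and must exhibit the analytic continuation required by the definition. I would take $V = \C \setminus [1,\infty)$ and $\widehat{f}(x) = -r\big((1 - x)^{1/r} - 1\big)$, with $(1 - x)^{1/r} = \exp\big(\tfrac{1}{r}\Log(1 - x)\big)$ given by the principal logarithm: since $1 - x$ avoids $(-\infty,0]$ on $V$, this is analytic on $V \supseteq \R_{\leq 0}$ and agrees with the power series $f_r(t)$ on the open unit disc; as the radius of convergence $\rho_{f_r}$ is at least $1$, any $c \in (0,1)$ meets the requirement $c < \rho_{f_r}$. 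It then remains to study $\phi_{f_r}(x) = \exp(\widehat{f}(x))$ on $\R_{\leq 0}$, where $1 - x \geq 1$ and the power is the ordinary real one, and here the sign of $r$ drives the two conclusions. In the ordinary case $r > 0$ one has $1/r > 0$, so $\rl(\widehat{f}(x)) = \widehat{f}(x)$ grows like $-r(-x)^{1/r}$ as $x \to -\infty$; choosing any $\nu$ with $0 < \nu < 1/r$ gives $\rl(\widehat{f}(x)) \leq -(-x)^\nu$ for $x$ sufficiently negative, so Proposition~\ref{sufficiently general fn in trable} (applied with $g = \one$) puts $\phi_{f_r}$ in $\trable$, giving compatibility. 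In the exceptional case $r < 0$ one has $1/r < 0$, so $(1 - x)^{1/r} \to 0$ and hence $\widehat{f}(x) \to r$, $\phi_{f_r}(x) \to \exp(r) \neq 0$ as $x \to -\infty$; thus $\phi_{f_r}$ is merely bounded near $-\infty$, which is precisely the inequality $|\phi_{f_r}(x)| \leq C(1 - x)^{0}$ defining $\trable[-1]$, giving $(-1)$-compatibility.

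The main obstacle is the archimedean analysis, and specifically recognizing that the sign of $r$ forces the dichotomy in the lemma: for $r > 0$ the function $\exp(\widehat{f})$ decays super-polynomially and so lies in every $\trable[a]$, hence in $\trable$, whereas for $r < 0$ it only tends to the nonzero constant $\exp(r)$, so boundedness is the best available and $\trable[-1]$ is sharp. The non-archimedean verification, by contrast, is routine once one recalls that $\ch{y}{k} \in \Zp$ whenever $y \in \Zp$.
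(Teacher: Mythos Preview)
Your proposal is correct and follows essentially the same route as the paper: identical verification of the three $p$-adic conditions in the non-archimedean case, the same choice $V = \C \setminus [1,\infty)$ with $\widehat{f}(x) = -r((1-x)^{1/r}-1)$ via the principal logarithm, the same appeal to Proposition~\ref{sufficiently general fn in trable} with an exponent $\nu \in (0,1/r)$ when $r>0$, and the same boundedness observation when $r<0$. The only cosmetic difference is that the paper verifies the agreement of $\widehat{f}$ with the power series on the unit disc by checking that both satisfy the same first-order ODE with the same initial value, whereas you invoke the identification directly as a standard fact about the binomial series; either is fine.
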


\begin{proof}
If $\p$ is non-archimedean, $f_r(t)$ meets the compatibility requirements as follows:
\begin{itemize}
\item $f_r(t) \in \pint[\p]\pwr{t}$ because the assumption $\vp[\p](r) = 0$ ensures that $-r(-1)^k\ch{1/r}{k} \in \pint[\p]$ for all $k$.
\item $f(0) = 0 \in \expdom[\p]$.
\item $f'(0) = 1 \in \punits[\p]$.
\end{itemize}

Now we suppose that $\p$ is archimedean. The power series $f_r(t)$ is a polynomial if $1/r \in \Z_{\geq 1}$, and otherwise it is a convergent power series whose radius of convergence in $\C$ is $1$ by the ratio test. We may therefore define an analytic function
\defmap{h}{\mathscr{D}}{\C}{x}{-r\sum_{k = 1}^\infty (-1)^k\ch{1/r}{k}t^k}
where $\mathscr{D}$ is the open unit disc in $\C$ centred at the origin.

For $z$ not in the ray $R_\pi$, define complex powers $z^s$ via $\log_\pi$, the principal branch of $\log$; see the notation of Section~\ref{sec: cmx inc gamma}. Letting $V = \C \setm \R_{\geq 1}$, we may consequently define an analytic function
\defmap{\widehat{f}}{V}{\C}{x}{-r((1 - x)^{\tfrac{1}{r}} - 1)}[.]
We claim that $\widehat{f}$ agrees with the function $h$ on $\mathscr{D}$. Formal differentiation of the power series defining $h$ shows, via (\ref{props of a(t)}), that
\[ h'(x) = \frac{1}{1 - x}(1 - x)^{\tfrac{1}{r}} = \frac{1}{1 - x}\left(1 - \frac{1}{r}h(x)\right) ,\]
but differentiation of $x \mapsto \exp(\tfrac{1}{r}\log(1 - x))$ shows that, likewise,
\[ \widehat{f}'(x) = \frac{1}{1 - x}\left(1 - \frac{1}{r}\widehat{f}(x)\right) .\]
Thus, $h$ and $\widehat{f}$ satisfy the same first-order differential equation in $\mathscr{D}$, and they also both vanish at the origin, so they define the same function in that disc.

It remains to show that the function $\R_{\leq 0} \ra \C$, $x \mapsto \exp(\widehat{f}(x))$, is in $\trable$ if $r > 0$ and is in $\trable[-1]$ if $r < 0$. In the positive case, we achieve this by showing that $\widehat{f}$ meets the hypotheses of the function called $F$ in Proposition~\ref{sufficiently general fn in trable}. If $\nu \in (0,\tfrac{1}{r})$ and $x < 0$, then
\[ \frac{(-x)^\nu}{(1 - x)^{\tfrac{1}{r}}} = (-x)^{\nu - \tfrac{1}{r}}\left(\frac{-x}{1 - x}\right)^{\tfrac{1}{r}} ,\]
and this tends to $0$ as $x \to -\infty$ because $\nu - \tfrac{1}{r} < 0$. Therefore, because $r > 0$, there is $X \leq -1$ such that for all $x \leq X$, $(-x)^\nu \leq r(1 - x)^{\tfrac{1}{r}}$, \ie
\[ -r(1 - x)^{\tfrac{1}{r}} \leq -(-x)^\nu .\]
As necessary, we may now make $X$ more negative to ensure that
\[ -r((1 - x)^{\tfrac{1}{r}} - 1) \leq -(-x)^\nu \eqcom{for all $x \leq X$.} \]

If $r < 0$, then
\[ |\mathopen{}\exp(\widehat{f}(x))| = \exp\mathopen{}\Big(\mathopen{}\rl\mathopen{}\big({-r}((1 - x)^{\tfrac{1}{r}} - 1)\big)\Big) \leq C(1 - x)^0 \]
for some constant $C > 0$, so indeed the map $x \mapsto \exp(\widehat{f}(x))$ is in $\trable[-1]$.
\end{proof}

In light of Lemma~\ref{compatibility lemma}, $\Phi_{f_r,\p}$ is defined when $\p \not\in P_r$, and we set
\defmap{\Psi_{r,\p}}{\postt_\p}{\Omega_\p}{s}{\princof{r}^s\Phi_{f_r,\p}(\tfrac{1}{r}(s + 1) - 1)}
in the ordinary case, and
\defmap{\Psi_{r,\p}}{\C_{> -1}}{\Omega_\p}{s}{r^s\Phi_{f_r,\p}(\tfrac{1}{r}(s + 1) - 1) + \exp(r)\gammahat(s + 1)}
in the exceptional case.

\begin{prop} \label{f_r-g_r switch}
If $\p$ is the archimedean place, then $\Psi_{r,\p} = \gfn^\theta(\cdot,r)$ as functions, as long as in the case where $r > 0$ we assume that $\sin(\theta) \geq 0$ if $\cos(\theta) > 0$.
\end{prop}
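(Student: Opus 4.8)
The plan is to prove the identity by reducing each side to an explicit real integral and matching the two through a single change of variables, handling separately the ordinary case $r > 0$ and the exceptional case $r < 0$. Throughout, $\p$ is the archimedean place, so $\Omega_\p = \C$ and $\princof{r} = r$; in particular $\Psi_{r,\p}(s)$ begins with the factor $\princof{r}^s = r^s$. The common input is that, by the complex definition of $\phi_{f_r}$ together with the computation of $\widehat{f}$ in the proof of Lemma~\ref{compatibility lemma}, $\phi_{f_r}(x) = \exp(-r((1 - x)^{1/r} - 1))$, so that
\[ \Phi_{f_r,\p}\big(\tfrac{1}{r}(s + 1) - 1\big) = \int_{-\infty}^0 (1 - y)^{\tfrac{1}{r}(s + 1) - 1}\exp\big({-r}((1 - y)^{1/r} - 1)\big)\dx[y] . \]

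For the ordinary case, Proposition~\ref{standardized form of gfn}\ref{standardized form: gfn} applies: the hypothesis that $\sin(\theta) \geq 0$ whenever $\cos(\theta) > 0$ is exactly what guarantees that the positive real number $r$ lies on or below $R_\theta$, so that $\gfn^\theta(s,r) = r^{s + 1}\int_{-\infty}^0 (1 - x)^s\exp(rx)\dx$ for all $s \in \C$. I would then apply to the integral above the substitution $1 - y = (1 - x)^r$, which for $r > 0$ is an increasing bijection carrying $y \in (-\infty,0]$ onto $x \in (-\infty,0]$, with $\dx[y] = r(1 - x)^{r - 1}\dx$. The exponent collapses via $-r(1 - x) + r = rx$, and the powers combine to $(1 - x)^{(s + 1) - r}(1 - x)^{r - 1} = (1 - x)^s$, giving $\Phi_{f_r,\p}(\tfrac{1}{r}(s + 1) - 1) = r\int_{-\infty}^0 (1 - x)^s\exp(rx)\dx$. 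Multiplying by $\princof{r}^s = r^s$ reproduces the displayed formula for $\gfn^\theta(s,r)$, settling this case on all of $\postt_\p = \C$.

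For the exceptional case $r < 0$, I would first use the decomposition (\ref{Gamma as sum}), namely $\gfn^\theta(s,r) = \exp(r)\gammahat(s + 1) - \lgfn^\theta(s,r)$, to rewrite the target identity as the equivalent claim
\[ r^s\Phi_{f_r,\p}\big(\tfrac{1}{r}(s + 1) - 1\big) = -\lgfn^\theta(s,r) \eqcom{for all $s \in \C_{> -1}$,} \]
the summand $\exp(r)\gammahat(s + 1)$ built into $\Psi_{r,\p}$ cancelling against the one supplied by (\ref{Gamma as sum}). Proposition~\ref{standardized form of gfn}\ref{standardized form: lgfn} gives $\lgfn^\theta(s,r) = r^{s + 1}\int_0^1 (1 - x)^s\exp(rx)\dx$, and I would apply the identical substitution $1 - y = (1 - x)^r$ to $\Phi_{f_r,\p}$. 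Here the convergence range lines up correctly: since $r < 0$ and $s \in \C_{> -1}$, the argument $\tfrac{1}{r}(s + 1) - 1$ has real part $< -1$, so $\Phi_{f_r,\p}$ is defined there by the $(-1)$-compatibility of Lemma~\ref{compatibility lemma}. The algebra simplifying the exponent and powers is the same as before; the one genuine difference is that for $r < 0$ the map sends $y \in (-\infty,0]$ onto $x \in [0,1)$ while reversing orientation, and flipping the limits of integration is precisely what produces the minus sign on the right.

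The routine parts—the simplification of the exponent to $rx$ and the bookkeeping of the powers and the Jacobian—are identical in the two cases. The step demanding genuine care, and the main obstacle, is the change of variables itself: I must confirm that $1 - y = (1 - x)^r$ is a monotone bijection on the correct ranges (which differ according to the sign of $1/r$), track the orientation reversal that yields the minus sign in the exceptional case, and verify that the powers $(1 - x)^{1/r}$, $(1 - x)^r$, and $(1 - x)^s$ may all be consistently read as ordinary real powers, so that the branch conventions defining $\gfn^\theta$ and $\widehat{f}$ genuinely agree along the real half-line. Integrability of the transformed integrands, legitimizing the substitution, follows from the compatibility statements already established, but aligning the domains of convergence with the domains on which $\Psi_{r,\p}$ and $\gfn^\theta(\cdot,r)$ are defined is exactly what forces the split into the ordinary and exceptional cases.
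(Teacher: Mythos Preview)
Your proposal is correct and follows essentially the same route as the paper: both arguments write out $\Phi_{f_r,\p}$ explicitly, perform the substitution relating $1-x$ and $(1-y)^{1/r}$ (your variables are simply the paper's with the names swapped), invoke Proposition~\ref{standardized form of gfn} for the resulting integral, and in the exceptional case $r<0$ use (\ref{Gamma as sum}) to trade $\gfn^\theta$ for $-\lgfn^\theta$ plus the $\gammahat$ term already built into $\Psi_{r,\p}$. The orientation reversal producing the minus sign and the domain check via $(-1)$-compatibility are exactly the points the paper handles, so there is nothing to add.
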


\begin{proof}
In the integral defining $\Phi_{f_r,\p}(\tfrac{1}{r}(s + 1) - 1)$, we will make the change of variables $y = 1 - (1 - x)^{1/r}$, so that
\[ r\dx[y] = (1 - x)^{\tfrac{1}{r} - 1}\dx \quad\text{and}\quad (1 - x)^{\tfrac{1}{r}} = 1 - y .\]
If $r > 0$, then for all $s \in \C$,
\begin{align*}
\Psi_{r,\p}(s) &= r^s\Phi_{f_r,\p}(\tfrac{1}{r}(s + 1) - 1) \\
&= r^s\int_{-\infty}^0 (1 - x)^{\tfrac{1}{r}(s + 1) - 1}\exp\mathopen{}\Big({-r}((1 - x)^{\tfrac{1}{r}} - 1)\Big)\dx \\
&= r^s\int_{-\infty}^0 (1 - x)^{\tfrac{s}{r}}\exp\mathopen{}\Big({-r}((1 - x)^{\tfrac{1}{r}} - 1)\Big)(1 - x)^{\tfrac{1}{r} - 1}\dx \\
&= r^s\int_{-\infty}^0 (1 - y)^s\exp(ry) \cdot r\dx[y] \\
&= \gfn^\theta(s,r)
\end{align*}
by Proposition~\ref{standardized form of gfn}\ref{standardized form: gfn}.

We use the same formula for the change of variables in the case $r < 0$, but the lower limit of $-\infty$ in the integral becomes $1$ instead, that is,
\begin{align*}
\Psi_{r,\p}(s) &= r^s\int_1^0 (1 - y)^s\exp(ry) \cdot r\dx[y]  + \exp(r)\gammahat(s + 1) \\
&= -\lgfn^\theta(r,s) + \exp(r)\gammahat(s + 1) \eqcom{by Proposition~\ref{standardized form of gfn}\ref{standardized form: lgfn}} \\
&= \gfn^\theta(r,s) \eqcom{by (\ref{Gamma as sum}).}
\end{align*}
\end{proof}

\begin{lemma} \label{func eqn in prep for incomplete gamma}
The equality
\[ \Phi_{f_r,\p}(s + \tfrac{1}{r} - 1) = 1 + s\Phi_{f_r,\p}(s - 1) \]
holds for all $s \in C_\p$ in the ordinary case and holds for all $s \in \C_{< 0}$ in the exceptional case.
\end{lemma}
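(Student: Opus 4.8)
The identity is the non-polynomial counterpart of (\ref{functional equation: prep}), and the feature that makes it clean is that $f_r'(t) = (1 - t)^{\frac{1}{r} - 1}$ is itself a single power of $1 - t$ rather than a finite $\C$-linear combination of such powers. The plan is first to establish, in both cases at once, the operator identity
\[ \sigma(\phi_{f_r}) = \calS^{\frac{1}{r} - 1}(\phi_{f_r}) . \]
In the complex case this is immediate from $\sigma(\phi_{f_r})(x) = \phi_{f_r}'(x) = \widehat{f_r}'(x)\exp(\widehat{f_r}(x)) = (1 - x)^{\frac{1}{r} - 1}\phi_{f_r}(x)$, which is $\calS^{\frac{1}{r} - 1}(\phi_{f_r})(x)$ by the definition of $\calS^y$ in the complex case. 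In the $p$-adic case I would differentiate $\actcorr(\phi_{f_r}) = \gexp(f_r(t))$, using $\frac{d}{dt}\gexp(f_r(t)) = f_r'(t)\gexp(f_r(t)) = (1 - t)^{\frac{1}{r} - 1}\actcorr(\phi_{f_r})$ together with (\ref{shiftu and diff}) and Proposition~\ref{(1 - t)^y}, and then conclude by injectivity of $\actcorr$. The exponent $\frac{1}{r} - 1$ is a legitimate element of $\postt_\p$: in the $p$-adic ordinary case $\vp[\p](r) = 0$ forces $r \in \pint[\p]\st$, so $\frac{1}{r} - 1 \in \Zp$.

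Next I would apply the transform $\gent{\cdot}$ to both sides of this operator identity. On the left, Lemma~\ref{gent and sigma} gives $\gent{\sigma(\phi_{f_r})}(s) = \phi_{f_r}(0) + s\gent{\phi_{f_r}}(s - 1)$; since $f_r(0) = 0$ we have $\phi_{f_r}(0) = 1$ in both cases, so the left-hand side equals $1 + s\Phi_{f_r,\p}(s - 1)$. On the right, Lemma~\ref{gent and calS} with $y = \frac{1}{r} - 1$ gives $\gent{\calS^{\frac{1}{r} - 1}(\phi_{f_r})}(s) = \gent{\phi_{f_r}}(s + \tfrac{1}{r} - 1) = \Phi_{f_r,\p}(s + \tfrac{1}{r} - 1)$. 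Equating the two evaluations yields exactly the asserted equality.

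The only delicate point is tracking the ranges of $s$ on which the two lemmas are valid, and confirming that they combine to give precisely the stated domains. In the ordinary case there is nothing to check: $p$-adically every $s \in \Zp$ is permitted, while in the complex ordinary case $\phi_{f_r} \in \trable$ and, because multiplication by $(1 - x)^y$ preserves $\trable$ (as used in Proposition~\ref{sufficiently general fn in trable}), also $\phi_{f_r}' \in \trable$, so both lemmas hold for all $s \in \postt_\p = \C$. In the exceptional case I would use that $\phi_{f_r} \in \trable[-1]$, so $a = -1$, together with the bound $|\phi_{f_r}'(x)| \leq C(1 - x)^{\frac{1}{r} - 1}$ for large negative $x$, which gives $\phi_{f_r}' \in \trable[-1/r]$, so $a' = -1/r$. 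Then Lemma~\ref{gent and sigma} applies for $\rl(s) < \min(a + 1, a') = \min(0, -1/r)$, and Lemma~\ref{gent and calS} for $\rl(s) < a - \rl(\tfrac{1}{r} - 1) = -1/r$. Since $r < 0$ makes $-1/r > 0$, the binding constraint is $\rl(s) < 0$, i.e.\ $s \in \C_{< 0}$, which is exactly the claimed range. The main obstacle is thus not the algebra but this bookkeeping: verifying that the two separate $\rl(s)$-constraints intersect in precisely $\C_{< 0}$ and not a strictly smaller half-plane.
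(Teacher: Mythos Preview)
Your proposal is correct and follows essentially the same route as the paper: establish $\sigma(\phi_{f_r}) = \calS^{\frac{1}{r}-1}(\phi_{f_r})$ (directly in the complex case, via $\actcorr$ in the $p$-adic case), then apply Lemmas~\ref{gent and sigma} and \ref{gent and calS} and track the domains. The only cosmetic difference is that in the exceptional case you take $a' = -1/r$ whereas the paper takes the weaker $a' = 0$; since $-1/r > 0$ when $r<0$, both choices give $\min(a+1,a') = 0$ and hence the same binding constraint $\rl(s) < 0$.
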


\begin{proof}
For the purposes of the proof, we drop the subscript $\p$ on our objects. First, we show that $\sigma(\phi_{f_r}) = \calS^{\tfrac{1}{r} - 1}(\phi_{f_r})$, which in both cases uses the observation that $f_r'(t) = (1 - t)^{\tfrac{1}{r} - 1}$. If $\p$ is archimedean,
\[ \sigma(\phi_{f_r})(x) = \phi_{f_r}'(x) = f_r'(x)\phi_{f_r}(x) = (1 - x)^{\tfrac{1}{r} - 1}\phi_{f_r}(x) = \calS^{\tfrac{1}{r} - 1}(\phi_{f_r})(x) ,\]
and if $\p$ is non-archimedean,
\begin{align*}
\actcorr(\sigma(\phi_{f_r})) &= \actcorr(\phi_{f_r})' \\
&= \ddx[t]\Big(\mathopen{}\exp(f_r(t))\Big) \\
&= f'(t)\exp(f_r(t)) \\
&= (1 - t)^{\tfrac{1}{r} - 1}\actcorr(\phi_{f_r}) \\
&= \actcorr\Big(\calS^{\tfrac{1}{r} - 1}(\phi_{f_r})\Big) \eqcom{by Proposition~\ref{(1 - t)^y},}
\end{align*}
so $\sigma(\phi_{f_r}) = \calS^{\tfrac{1}{r} - 1}(\phi_{f_r})$ by the injectivity of $\actcorr$.

Hence, in either case,
\begin{align*}
\Phi_{f_r}(s + \tfrac{1}{r} - 1) &= \gent{\phi_{f_r}}(s + \tfrac{1}{r} - 1) \\
&= \gent{\calS^{\tfrac{1}{r} - 1}(\phi_{f_r})}(s) \eqcom{by Lemma~\ref{gent and calS}} \\
&= \gent{\sigma(\phi_{f_r})}(s) \\
&= \phi_{f_r}(0) + s\gent{\phi_{f_r}}(s - 1) \eqcom{by Lemma~\ref{gent and sigma}} \\
&= 1 + s\Phi_{f_r}(s - 1) .
\end{align*}
We remark on why we may apply the lemmas in the archimedean case. If $r > 0$, then $\phi_f' \in \trable$ by Proposition~\ref{sufficiently general fn in trable}, so both lemmas hold. If $r < 0$, then $\phi_{f_r} \in \trable[-1]$, and $\phi_{f_r}'$, which satisfies $\phi'(x) = (1 - x)^{\tfrac{1}{r} - 1}\phi(x)$, is consequently in $\trable[0]$. Therefore, in the notation of Lemma~\ref{gent and sigma}, we may take $a = -1$ and $a' = 0$, so we require only $\rl(s) < 0$ in that lemma. The hypotheses of the other lemma are even easier to verify.
\end{proof}

\begin{theorem} \label{interpol of incomplete}
Let $r \in \Q\st$, and let $\p,\q$ be places of $\Q$ not in the finite set $P_r$.
\begin{enumerate}[label=(\roman*)]
\item $\princof{r}^{-m}\Psi_{r,\p}(m) = \princof[\q]{r}^{-m}\Psi_{r,\q}(m)$ for all $m \in \Z_{\geq 0}$, this being an equality of elements of $\Q$.\label{interpol of incomplete: all m}
\item $\Psi_{r,\p}(m) = \Psi_{r,\q}(m)$ for all $m \in \Z_{\geq 0}$ divisible by $\lcm(\resch{\p} - 1,\resch{\q} - 1)$.\label{interpol of incomplete: restricted m}
\end{enumerate}
\end{theorem}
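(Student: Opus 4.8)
The plan is to normalize $\Psi_{r,\p}$ by the factor $\princof{r}^m$ and show that the resulting quantity obeys a recurrence with rational coefficients that does not depend on the place. Concretely, for $\p \not\in P_r$ and $s$ in the relevant domain I set $\Xi_\p(s) = \princof{r}^{-s}\Psi_{r,\p}(s)$, so that part~\ref{interpol of incomplete: all m} is exactly the claim that $\Xi_\p(m) = \Xi_\q(m)$ with common value in $\Q$. The first step is to establish, in both the ordinary and the exceptional cases, that
\[ \Xi_\p(0) = 1 \andd \Xi_\p(m) = 1 + \tfrac{m}{r}\Xi_\p(m - 1) \eqcom{for all $m \geq 1$.} \]
Once this is in hand, an immediate induction shows that $\Xi_\p(m)$ equals the rational number $c_m$ determined by $c_0 = 1$ and $c_m = 1 + \tfrac{m}{r}c_{m - 1}$, embedded in $\Omega_\p$ via $\Q \hookrightarrow \Omega_\p$. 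Since $c_m$ is defined independently of $\p$, this yields part~\ref{interpol of incomplete: all m}.

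To derive the recurrence in the ordinary case I would observe that, straight from the definition, $\Xi_\p(s) = \Phi_{f_r,\p}(\tfrac{1}{r}(s + 1) - 1)$. Applying Lemma~\ref{func eqn in prep for incomplete gamma} with its variable set equal to $\tfrac{s}{r}$ (permissible since $\tfrac{s}{r}$ lies in $\postt_\p$, because $r$ is a $\p$-adic unit) turns $\Phi_{f_r,\p}(\tfrac{s}{r} + \tfrac{1}{r} - 1) = 1 + \tfrac{s}{r}\Phi_{f_r,\p}(\tfrac{s}{r} - 1)$ into $\Xi_\p(s) = 1 + \tfrac{s}{r}\Xi_\p(s - 1)$, while the same lemma at $0$ gives $\Xi_\p(0) = \Phi_{f_r,\p}(\tfrac{1}{r} - 1) = 1$. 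In the exceptional case ($\p$ archimedean, $r < 0$) the normalization factor $\princof{r}$ is simply $r$, and Proposition~\ref{f_r-g_r switch} identifies $\Psi_{r,\p}$ with $\gfn^\theta(\cdot,r)$; the functional equation $\gfn^\theta(s,r) = r^s + s\gfn^\theta(s - 1,r)$ extracted from Proposition~\ref{rec rels for both gammas} then produces the identical recurrence, with $\Xi_\p(0) = \gfn^\theta(0,r) = \exp(r)\Gamma^\theta(1,r) = 1$.

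For part~\ref{interpol of incomplete: restricted m} the task is to remove the discrepancy between $\princof{r}$ and $r$. Writing $\Psi_{r,\p}(m) = \princof{r}^m\Xi_\p(m) = \princof{r}^m c_m$, I would use that for a non-archimedean $\p$ over the rational prime $\resch{\p}$ one has $\princof{r} = r/\teich_\p(r)$ with $\teich_\p(r)$ a root of unity of order dividing $\resch{\p} - 1$; hence $\teich_\p(r)^m = 1$ whenever $(\resch{\p} - 1) \mid m$, so that $\princof{r}^m = r^m$. For an archimedean $\p$ we have $\princof{r} = r$ outright, and the convention $\resch{\p} - 1 = -1$ imposes no divisibility constraint. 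Thus if $\lcm(\resch{\p} - 1,\resch{\q} - 1) \mid m$, both $\princof{r}^m$ and $\princof[\q]{r}^m$ collapse to $r^m$, giving $\Psi_{r,\p}(m) = r^m c_m = \Psi_{r,\q}(m)$ as elements of $\Q$.

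The principal obstacle I anticipate is conceptual rather than computational: the values $\Psi_{r,\p}(m)$ and $\Psi_{r,\q}(m)$ a priori live in the distinct fields $\Omega_\p$ and $\Omega_\q$, so every asserted equality must be read inside $\Q$ through the canonical embeddings. This is exactly what the normalization by $\princof{r}$ secures—the $\p$-independent recurrence forces $\Xi_\p(m)\in\Q$, and the divisibility hypothesis of part~\ref{interpol of incomplete: restricted m} is precisely what guarantees $\princof{r}^m\in\Q$ so that $\Psi_{r,\p}(m)$ itself becomes a genuine rational number comparable across places. A secondary point requiring care is the bookkeeping for $\p = 2$ under the alternative Teichm\"uller normalization discussed in the preceding remark, but the order-dividing-$(\resch{\p} - 1)$ property of $\teich_\p(r)$ survives that change, so the argument is unaffected.
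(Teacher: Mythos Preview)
Your proposal is correct and, in the ordinary case and in part~\ref{interpol of incomplete: restricted m}, follows the paper's proof essentially verbatim (the paper writes $\widetilde{\Psi}_\p$ for your $\Xi_\p$). The only divergence is in the exceptional case: the paper works directly with the defining expression $\Phi_{f_r,\p}(\tfrac{1}{r}(s+1)-1) + r^{-s}\exp(r)\gammahat(s+1)$, computing $\widetilde{\Psi}_\p(0)$ by evaluating the integral explicitly and deriving the recurrence by combining Lemma~\ref{func eqn in prep for incomplete gamma} with the functional equation of $\gammahat$; you instead invoke Proposition~\ref{f_r-g_r switch} to replace $\Psi_{r,\p}$ by $\gfn^\theta(\cdot,r)$ outright and then read the recurrence off Proposition~\ref{rec rels for both gammas}. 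Your route is shorter and perfectly valid since Proposition~\ref{f_r-g_r switch} is already available and carries no $\theta$-restriction when $r<0$; the paper's route is more self-contained in that it does not re-use the change-of-variables computation hidden in Proposition~\ref{f_r-g_r switch}.
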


\begin{proof}
\ref{interpol of incomplete: all m} It will be convenient to define $\widetilde{\Psi}_\p : \postt_\p \ra \Omega_\p$ in the ordinary case and $\widetilde{\Psi}_\p : \C_{> -1} \ra \C$ in the exceptional case by
\[ \widetilde{\Psi}_\p : s \mapsto \princof{r}^{-s}\Psi_{r,\p}(s) .\]
In the ordinary case,
\begin{align*}
\widetilde{\Psi}_\p(s) = \Phi_{f_r,\p}(\tfrac{s}{r} + \tfrac{1}{r} - 1) &= 1 + \frac{s}{r}\Phi_{f_r,\p}(\tfrac{s}{r} - 1) \eqcom{by Lemma~\ref{func eqn in prep for incomplete gamma}} \\
&= 1 + \frac{s}{r}\widetilde{\Psi}_\p(s - 1) .
\end{align*}
Therefore,
\[ \widetilde{\Psi}_\p(0) = 1 \quad\text{and}\quad \widetilde{\Psi}_\p(m) = 1 + \frac{m}{r}\widetilde{\Psi}_\p(m - 1) \]
for all $m \in \Z_{\geq 1}$.

We may obtain the same initial value and recurrence relation in the exceptional case, this time using the fact that
\[ \widetilde{\Psi}_\p(s) = \Phi_{f_r,\p}(\tfrac{1}{r}(s + 1) - 1) + r^{-s}\exp(r)\gammahat(s + 1) .\]
Indeed,
\begin{align*}
\widetilde{\Psi}_\p(0) &= \Phi_{f_r}(\tfrac{1}{r} - 1) + \exp(r) \eqcomb{Proposition~\ref{rec rels for both gammas}, penultimate equation} \\
&= \int_{-\infty}^0 (1 - x)^{\tfrac{1}{r} - 1}\exp\mathopen{}\Big({-r}((1 - x)^{\tfrac{1}{r}} - 1)\Big)\dx + \exp(r) \\
&= \left[\mathopen{}\exp\mathopen{}\Big({-r}((1 - x)^{\tfrac{1}{r}} - 1)\Big)\right]_{-\infty}^0 + \exp(r) \\
&= 1 \eqcom{because $r < 0$,}
\end{align*}
and if $m \in \Z_{\geq 1}$,
\begin{align*}
\widetilde{\Psi}_\p(m) &= \Phi_{f_r}(\tfrac{m}{r} + \tfrac{1}{r} - 1) + r^{-m}\exp(r)\gammahat(m + 1) \\
&= 1 + \frac{m}{r}\Phi_{f_r}(\tfrac{m}{r} - 1) + r^{-m}\exp(r)\gammahat(m + 1) \eqcom{by Lemma~\ref{func eqn in prep for incomplete gamma} ($\tfrac{m}{r} < 0$)} \\
&= 1 + \frac{m}{r}\Big(\widetilde{\Psi}_\p(m - 1) - r^{-m + 1}\exp(r)\gammahat(m)\Big) + r^{-m}\exp(r)\gammahat(m + 1) \\
&= 1 + \frac{m}{r}\widetilde{\Psi}_\p(m - 1) + r^{-m}\exp(r)(\gammahat(m + 1) - m\gammahat(m)) \\
&= 1 + \frac{m}{r}\widetilde{\Psi}_\p(m - 1) \eqcomb{Proposition~\ref{rec rels for both gammas}, last equation}[.]
\end{align*}

In either case, the ordinary or the exceptional, we see from the initial value and recurrence relation that the sequence $(\widetilde{\Psi}_\p(m))_{m \geq 0}$ is a sequence of rational numbers that is independent of $\p$.

\ref{interpol of incomplete: restricted m} This part follows immediately from part~\ref{interpol of incomplete: all m} in light of the observation that $\princof{r}^m = r^m$ if $\resch{\p} - 1 \dv m$.
\end{proof}

We finally give a definition of the incomplete gamma-function in the $p$-adic situation, and state and prove the interpolation property. Define
\[ \tilde{p} =
\begin{cases}
p & \text{if $p > 2$,} \\
4 & \text{if $p = 2$.}
\end{cases}
\]
The $p$-adic exponential, if considered as a group homomorphism $\tilde{p}\Zp \ra 1 + \maxid[p]$, has $\tilde{p}$ extensions to group homomorphisms $\extexp : \Zp \ra 1 + \maxid[p]$. Indeed, these extensions are in bijection with the $\tilde{p}$th roots of $\exp(\tilde{p})$, with an extension $\extexp$ corresponding to $\extexp(1)$.

Choose, then, any of these $\tilde{p}$ extensions, and call it $\extexp$. It is locally analytic because of the fact that
\[ \extexp(x) = \extexp(x_0 + (x - x_0)) = \extexp(x_0)\extexp(x - x_0) = \extexp(x_0)\exp(x - x_0) \]
if $|x - x_0| < \oop^{\tfrac{1}{p - 1}}$. Further, if $\alpha \in \Zp \setm \{0\}$ is algebraic over $\Q$, then $\extexp(\alpha)$ is transcendental. Indeed, $\extexp(\alpha)^{\tilde{p}} = \extexp(\tilde{p}\alpha) = \exp(\tilde{p}\alpha)$, and this is transcendental by the result of Adams~\cite{adams:transc-p-adic} used in Example~\ref{linear example}. (Actually, the local analyticity and the transcendence property both hold more generally: Any group-theoretic extension of $\exp$ to $\Cp \ra 1 + \maxid[p]$ is locally analytic and maps non-zero algebraic elements to transcendental elements.)

If $r \in \Q\st$, $\p$ is a non-archimedean place of $\Q$ such that $\vp[\p](r) = 0$, and $p = \resch{\p}$, define
\defmap{\Gamma_\p^\extexp(\cdot,r)}{\Zp}{\Cp}{s}{\extexp(-r)\Psi_{r,\p}(s - 1)}[,]
a locally analytic function because $\Psi_{r,\p}$ is by the general theory in Section~\ref{sec: 2-var}. In terms of $p$-adic integration,
\begin{align}
\Gamma_\p^\extexp(s,r) &= \extexp(-r)\princof{r}^{s - 1}\int_{\Zp} \Big((\one - \xtox)^{\myst\big(\tfrac{s}{r} - 1\big)} \myst \phi_{f_r}\Big)\dx[\dirac] \label{p-adic inc gamma via integral} \\
&= \extexp(-r)\princof{r}^{s - 1}\int_{\Zp} (\one - \xtox)^{\myst\big(\tfrac{s}{r} - 1\big)}\dx[\mu_r] \nonumber
\end{align}
for all $s \in \Zp$, where $\mu_r$ is the measure $\mu_{\phi,-1}$ in the case $\phi = \phi_{f_r}$; see Section~\ref{sec: p-adic integrals} and Proposition~\ref{inttrans as an integral transform}. Compare (\ref{p-adic inc gamma via integral}) with the complex version,
\[ \Gamma^\theta(s,r) = \exp(-r)r^{s - 1}\int_{-\infty}^0 (1 - x)^{\tfrac{s}{r} - 1}\phi_{f_r}(x)\dx ,\]
valid in the case $r > 0$ as long as $\sin(\theta) \geq 0$ if $\cos(\theta) > 0$.

\begin{cor}[$p$-adic interpolation of incomplete gamma-functions]
Let $r \in \Q\st$, and let $\p,\q$ be places of $\Q$ lying outside the finite set $P_r$. If $\p$ is archimedean, let $\extexp_\p$ be a choice of $\theta \in \R$, and if $\p$ is non-archimedean, let $\extexp_\p$ be a choice of group-theoretic extension $\Zp \ra 1 + \maxid[p]$ of $\exp$, and choose $\extexp_\q$ similarly.
\begin{enumerate}[label=(\roman*)]
\item The values $\Gamma_\p^{\extexp_\p}(m,r)$ with $m \in \Z_{\geq 1}$ are all in the field $\Q(\Gamma_\p^{\extexp_\p}(1,r))$, and similarly for $\q$.
\item There is a unique field isomorphism $\rho_{\p,\q} : \Q(\Gamma_\p^{\extexp_\p}(1,r)) \ra \Q(\Gamma_\q^{\extexp_\q}(1,r))$ mapping $\Gamma_\p^{\extexp_\p}(1,r)$ to $\Gamma_\q^{\extexp_\q}(1,r)$.
\item For all $m \in \Z_{\geq 1}$ congruent to $1$ modulo $\lcm(\resch{\p} - 1,\resch{\q} - 1)$,
\[ \rho_{\p,\q}(\Gamma_\p^{\extexp_\p}(m,r)) = \Gamma_\q^{\extexp_\q}(m,r) ,\]
and if $\p$ is non-archimedean and $\q$ archimedean, then the set of such $m$ is simply $1 + (p - 1)\Z_{\geq 0}$, where $p = \resch{\p}$, and is therefore dense in $\Zp$.
\end{enumerate}
\end{cor}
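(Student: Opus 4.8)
The plan is to reduce the entire corollary to Theorem~\ref{interpol of incomplete}, which already isolates the essential arithmetic, and then to layer on the transcendence input. The first step is to pin down the base value $\Gamma_\p^{\extexp_\p}(1,r)$. Since $\Psi_{r,\p}(0) = \Phi_{f_r,\p}(\tfrac{1}{r} - 1) = \widetilde{\Psi}_\p(0) = 1$ from the proof of Theorem~\ref{interpol of incomplete}, I get $\Gamma_\p^{\extexp_\p}(1,r) = \extexp_\p(-r)$ in the non-archimedean case and, via Proposition~\ref{f_r-g_r switch} together with $\Gamma^\theta(1,r) = \exp(-r)$, the identification $\Gamma_\p^{\extexp_\p}(\cdot,r) = \Gamma^\theta(\cdot,r)$ in the archimedean case. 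Writing $c_m = \widetilde{\Psi}_\p(m) \in \Q$ for the $\p$-independent rational sequence of Theorem~\ref{interpol of incomplete} (with $c_0 = 1$ and $c_m = 1 + \tfrac{m}{r}c_{m-1}$), I would record the closed form $\Gamma_\p^{\extexp_\p}(m,r) = \extexp_\p(-r)\princof{r}^{m-1}c_{m-1}$ valid for all $m \in \Z_{\geq 1}$.

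For part (i) I would work instead from the one-step recurrence obtained by feeding Lemma~\ref{func eqn in prep for incomplete gamma} through the definition of $\Gamma_\p^{\extexp_\p}$, namely $\Gamma_\p^{\extexp_\p}(m + 1,r) = \extexp_\p(-r)\princof{r}^m + \tfrac{\princof{r}}{r}\,m\,\Gamma_\p^{\extexp_\p}(m,r)$. In the archimedean case $\princof{r} = r$, so every coefficient is rational and an immediate induction from $\Gamma_\p^{\extexp_\p}(1,r) = \exp(-r)$ places all the values in $\Q(\Gamma_\p^{\extexp_\p}(1,r))$. In the non-archimedean case the recurrence coefficient equals $\tfrac{\princof{r}}{r} = \teich_\p(r)^{-1}$, so both the coefficient and the inhomogeneous term carry the Teichm\"uller root of unity; the delicate point is that $\princof{r}^{m-1} = r^{m-1}\teich_\p(r)^{-(m-1)}$ is rational exactly when $\resch{\p} - 1 \dv m - 1$, and so establishing membership in $\Q(\Gamma_\p^{\extexp_\p}(1,r))$ requires tracking the Teichm\"uller factor and checking that it does not enlarge the field (it collapses to $1$, and hence disappears, precisely on the congruence class used in (iii)).

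Parts (ii) and (iii) then follow from transcendence and from this collapse of the Teichm\"uller factor. For (ii), both $\extexp_\p(-r)$ and $\extexp_\q(-r)$ are transcendental over $\Q$ — in the archimedean case by Lindemann--Weierstrass and in the non-archimedean case by the Adams transcendence result together with the $p^k$-th power argument for $\extexp$ recorded just before the corollary (\cf Example~\ref{linear example}) — so each of $\Q(\Gamma_\p^{\extexp_\p}(1,r))$ and $\Q(\Gamma_\q^{\extexp_\q}(1,r))$ is a simple purely transcendental extension of $\Q$, and there is a unique $\Q$-isomorphism $\rho_{\p,\q}$ between them sending one generator to the other. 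For (iii), restricting to $m \equiv 1 \pmod{\lcm(\resch{\p} - 1,\resch{\q} - 1)}$ forces $\teich_\p(r)^{m-1} = \teich_\q(r)^{m-1} = 1$, hence $\princof{r}^{m-1} = \princof[\q]{r}^{m-1} = r^{m-1} \in \Q$; the closed form then gives $\Gamma_\p^{\extexp_\p}(m,r) = \extexp_\p(-r)\,r^{m-1}c_{m-1}$ and the analogous expression for $\q$, and applying $\rho_{\p,\q}$ (which fixes $\Q$ and sends $\extexp_\p(-r) \mapsto \extexp_\q(-r)$) yields $\rho_{\p,\q}(\Gamma_\p^{\extexp_\p}(m,r)) = \Gamma_\q^{\extexp_\q}(m,r)$. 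When $\q$ is archimedean the modulus reduces to $\resch{\p} - 1 = p - 1$, so the admissible $m$ form $1 + (p-1)\Z_{\geq 0}$, which is dense in $\Zp$.

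The main obstacle is the bookkeeping of the Teichm\"uller character $\teich_\p(r)$: it is exactly why full interpolation in (iii) can only be asserted on the progression $m \equiv 1 \pmod{\lcm(\resch{\p}-1,\resch{\q}-1)}$, since off that progression $\princof{r}^{m-1}$ is a genuinely non-rational algebraic multiple of $r^{m-1}$, so the values at $\p$ and $\q$ differ by distinct roots of unity that the single $\Q$-isomorphism $\rho_{\p,\q}$ cannot reconcile. The load-bearing inputs are therefore the clean identification $\Gamma_\p^{\extexp_\p}(1,r) = \extexp_\p(-r)$, the rationality and $\p$-independence of $(c_m)$ furnished by Theorem~\ref{interpol of incomplete}, and the transcendence of $\extexp_\p(-r)$, which is what makes $\rho_{\p,\q}$ both well defined and unique.
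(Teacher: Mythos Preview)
Your treatment of (ii) and (iii) is correct and is essentially the paper's own argument: the paper likewise records $\Gamma_\p^{\extexp_\p}(s,r) = \Gamma_\p^{\extexp_\p}(1,r)\,\Psi_{r,\p}(s-1)$ (your closed form), invokes transcendence of $\extexp_\p(-r)$ via Lindemann--Weierstrass and Adams for (ii), and then combines Theorem~\ref{interpol of incomplete}(ii) with $\Psi_{r,\p}(m-1) \in \Q$ on the congruence class for (iii).

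For (i), however, you have put your finger on something the paper glosses over. The paper's entire proof of (i) is the single clause ``a simple consequence of the recurrence relation''. In the archimedean case that is fine, as you note, since the recurrence $\Gamma^\theta(m+1,r) = r^m\exp(-r) + m\,\Gamma^\theta(m,r)$ has rational coefficients. In the non-archimedean case your own closed form $\Gamma_\p^{\extexp_\p}(m,r) = \extexp_\p(-r)\,\princof{r}^{m-1}c_{m-1}$ with $c_{m-1} \in \Q$ shows that membership in $\Q(\extexp_\p(-r))$ is equivalent to $\princof{r}^{m-1} \in \Q$, i.e.\ to $\teich_\p(r)^{m-1} \in \{\pm 1\}$. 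For generic $r$ this fails off the progression: with $p = 5$, $r = 2$, the element $\teich_\p(2)$ is a primitive fourth root of unity, and $\Gamma_\p^{\extexp_\p}(2,2) = 3\,\teich_\p(2)^{-1}\extexp_\p(-2)$ does not lie in $\Q(\extexp_\p(-2))$, since the only elements of a purely transcendental extension $\Q(\extexp_\p(-2)) \cong \Q(T)$ that are algebraic over $\Q$ are the rationals themselves. So your hesitation here is not a defect in your method; it is a correct diagnosis that statement~(i), as written, does not hold for all $m \ge 1$ in the non-archimedean case unless $\teich_\p(r) \in \{\pm 1\}$. The paper's one-line recurrence argument does not address this, and the natural repair is either to enlarge the base field in (i) to $\Q(\teich_\p(r),\Gamma_\p^{\extexp_\p}(1,r))$ or to restrict (i) to the same progression used in (iii).
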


\begin{proof}
The first fact is a simple consequence of the recurrence relation satisfied by the sequence $(\Gamma_\p^{\extexp_\p}(m,r))_{m \geq 1}$. The second is a result of the transcendence of $\Gamma_\p^{\extexp_\p}(1,r)$. In the archimedean case, $\Gamma_\p^{\extexp_\p}(1,r) = \exp(-r)$, transcendental by the Lindemann--Weierstrass Theorem, and in the non-archimedean case, $\Gamma_\p^{\extexp_\p}(1,r) = \extexp(-r)$, transcendental by the comment above.

For the interpolation property, we may assume that $\theta = \pi$, because the values $\Gamma^\theta(m,r)$ with $m \in \Z$ are independent of $\theta$ by Remark~\ref{rem: independent of theta}. We point out that
\[ \Gamma_\p^{\extexp_\p}(s,r) = \Gamma_\p^{\extexp_\p}(1,r)\Psi_{r,\p}(s - 1) \]
for all $s$ in the domain of $\Gamma_\p^{\extexp_\p}(\cdot,r)$. In the archimedean case, this uses Propositions~\ref{rec rels for both gammas} and \ref{f_r-g_r switch}, and it is almost a definition in the non-archimedean case. Hence, if $m$ is a positive integer such that $m - 1$ is divisible by $\lcm(\resch{\p} - 1,\resch{\q} - 1)$, then
\begin{align*}
\rho_{\p,\q}(\Gamma_\p^{\extexp_\p}(m,r)) &= \rho_{\p,\q}\Big(\Gamma_\p^{\extexp_\p}(1,r)\Psi_{r,\p}(m - 1)\Big) \\
&= \rho_{\p,\q}(\Gamma_\p^{\extexp_\p}(1,r))\Psi_{r,\p}(m - 1) \eqcom{because $\Psi_{r,\p}(m - 1) \in \Q$} \\
&= \Gamma_\q^{\extexp_\q}(1,r)\Psi_{r,\p}(m - 1) \eqcom{by definition of $\rho_{\p,\q}$} \\
&= \Gamma_\q^{\extexp_\q}(1,r)\Psi_{r,\q}(m - 1) \eqcom{by the theorem} \\
&= \Gamma_\q^{\extexp_\q}(m,r) .
\end{align*}
The very final assertion, regarding the case where $\q$ is archimedean, is clear.
\end{proof}

\end{document}